\newcommand{\ep}{\epsilon}
\newcommand{\si}{\sigma}
\newcommand{\Si}{\Sigma}
\newcommand{\Ga}{\Gamma}
\newcommand{\bC}{\mathbb{C}}
\newcommand{\bP}{\mathbb{P}}
\newcommand{\bQ}{\mathbb{Q}}
\newcommand{\bR}{\mathbb{R}}
\newcommand{\bT}{\mathbb{T}}
\newcommand{\bZ}{\mathbb{Z}}
\newcommand{\cB}{\mathcal{B}}
\newcommand{\cC}{\mathcal{C}}
\newcommand{\cD}{\mathcal{D}}
\newcommand{\cE}{\mathcal{E}}
\newcommand{\cH}{\mathcal{H}}
\newcommand{\cI}{\mathcal{I}}
\newcommand{\cK}{\mathcal{K}}
\newcommand{\cL}{\mathcal{L}}
\newcommand{\cM}{\mathcal{M}}
\newcommand{\cO}{\mathcal{O}}
\newcommand{\cP}{\mathcal{P}}
\newcommand{\cR}{\mathcal{R}}
\newcommand{\cQ}{\mathcal{Q}}
\newcommand{\cT}{\mathcal{T}}
\newcommand{\cU}{\mathcal{U}}
\newcommand{\cX}{\mathcal{X}}
\newcommand{\cY}{\mathcal{Y}}
\newcommand{\fp}{\mathfrak{p}}
\newcommand{\fl}{\mathfrak{l}}
\newcommand{\fn}{\mathfrak{n}}
\newcommand{\fg}{\mathfrak{g}}
\newcommand{\Hom}{\mathrm{Hom}}
\newcommand{\age}{\mathrm{age}}
\newcommand{{\inv} }{\mathrm{inv}}
\newcommand{\ev}{\mathrm{ev}}
\newcommand{\Aut}{\mathrm{Aut}}
\newcommand{\Res}{\mathrm{Res}}
\newcommand{\rank}{\mathrm{rank}}
\newcommand{\val}{ {\mathrm{val}} }
\newcommand{\vir}{{\mathrm{vir}}}
\newcommand{\CR}{  {\mathrm{CR}}  }
\newcommand{\one}{\mathbf{1}}
\newcommand{\be}{\mathbf{e}}
\newcommand{\bu}{\mathbf{u}}
\newcommand{\bGa}{\mathbf{\Ga}}
\newcommand{\btau}{\boldsymbol\tau }
\newcommand{\zero}{\mathbf{0}}
\newcommand{\bmu}{\boldsymbol\mu}
\newcommand{\boSi}{\boldsymbol\Si}
\newcommand{\su}{\mathsf{u}}
\newcommand{\sv}{\mathsf{v}}
\newcommand{\sw}{\mathsf{w}}
\newcommand{\tit}{ {\widetilde{t}} }
\newcommand{\tA}{ {\widetilde{A}} }
\newcommand{\tW}{{\widetilde{W}}}
\newcommand{\tp}{ {\widetilde{p}} }
\newcommand{\tI}{{\widetilde{I}}}
\newcommand{\tT}{\widetilde{T}}
\newcommand{\talpha}{ {\widetilde{\alpha}} }
\newcommand{\tGamma}{{\widetilde{\Gamma}}}
\newcommand{\txi}{ {\widetilde{\xi}} }
\newcommand{\tSi}{ {\widetilde{\Si}} }
\newcommand{\hF}{\hat{F}}
\newcommand{\hu}{\hat{u}}
\newcommand{\hbu}{\hat{\bu}}
\newcommand{\hxi}{\hat{\xi}}
\newcommand{\vh}{\vec{h}}
\newcommand{\vGa}{\vec{\Ga}}
\newcommand{\vmu}{\vec{\mu}}
\newcommand{\tGammar}{{\tGamma^{\mathrm{red}}_{\alpha,q}}}
\newcommand{\Bsi}{\mathrm{Box}(\si)}
\newcommand{\Mbar}{\overline{\cM}}
\newcommand{\BG}{\mathcal{B} G}
\newcommand{\IBG}{\mathcal{IB} G}
\newcommand{\IX}{\mathcal{IX}}
\newtheorem{dummy}{dummy}[section]
\newtheorem{lemma}[dummy]{Lemma}
\newtheorem{theorem}[dummy]{Theorem}
\newtheorem{corollary}[dummy]{Corollary}
\newtheorem{proposition}[dummy]{Proposition}
\newtheorem{definition}[dummy]{Definition}
\newtheorem{remark}[dummy]{Remark}
\newtheorem{example}[dummy]{Example}
\begin{document}

\title[\tiny All Genus Open-Closed Mirror Symmetry for Affine Toric CY 3-Orbifolds]{All Genus Open-Closed Mirror Symmetry for Affine Toric Calabi-Yau 3-Orbifolds}

\author{Bohan Fang}
\email{bohanfang@gmail.com}
\address{Beijing International Center for Mathematical Research, Peking University, 5 Yiheyuan Road, Beijing 100871, China}

\author{Chiu-Chu Melissa Liu}
\email{ccliu@math.columbia.edu}
\address{Department of Mathematics, Columbia University, 2990 Broadway, New York, NY 10027}

\author{Zhengyu Zong}
\email{zyzong@math.tsinghua.edu.cn}
\address{Yau Mathematical Sciences Center, Tsinghua University, Jin Chun Yuan West Building,
Tsinghua University, Haidian District, Beijing 100084, China}

\begin{abstract}
The Remodeling Conjecture proposed by Bouchard-Klemm-Mari\~{n}o-Pasquetti \cite{BKMP09, BKMP10} relates all genus open and closed Gromov-Witten invariants of a semi-projective toric
Calabi-Yau 3-manifolds/3-orbifolds $\cX$ to the Eynard-Orantin invariants
of the mirror curve of $\cX$. In this paper, we present a proof of the Remodeling Conjecture
for open-closed orbifold Gromov-Witten invariants of an arbitrary affine toric Calabi-Yau 3-orbifold relative to a framed Aganagic-Vafa Lagrangian brane.
This can be viewed as an all genus open-closed mirror symmetry for affine toric Calabi-Yau 3-orbifolds.
\end{abstract}

\keywords{Open Gromov-Witten invariants, mirror symmetry, topological recursion}

\subjclass[2010]{Primary 14N35, Secondary 14J33}
\maketitle


\section{Introduction}

\subsection{Background and motivation}
Mirror symmetry relates the A-model topological string theory
on a Calabi-Yau 3-fold $\cX$ to the B-model topological string theory
on another Calabi-Yau 3-fold $\check{\cX}$, the mirror of $\cX$.
The genus $g$ free energy of the topological A-model on $\cX$ is
mathematically defined as a generating function $F_g^{\cX}$ of genus $g$ Gromov-Witten invariants of $\cX$,
which is a function on a (formal) neighborhood of the large radius limit in the (complexified) K\"{a}hler moduli of $\cX$.
The genus $g$ free energy of the topological B-model on $\check{\cX}$
is a section of $\mathbb{L}^{2-2g}$, where $\mathbb{L}$ is a line bundle over the complex moduli of $\check{\cX}$,
so locally it is a function $\check{F}_g^{\check{X}}$ on the complex moduli of $\check{\cX}$.
A mathematical consequence of mirror symmetry is
$\check{F}_g^{\check{\cX}}= F_g^{\cX} + \delta_{g,0}a_0 + \delta_{g,1} a_1$ under the mirror map,
where $a_0$  (resp. $a_1$) is a cubic  (resp. linear) function in K\"{a}hler parameters.
The mirror map and $\check{F}_0^{\check{\cX}}$
are determined by period integrals of a holomorphic 3-form on $\check{\cX}$.
Period integrals of Calabi-Yau complete intersections in toric manifolds
can be expressed in terms of explicit hypergeometric functions.

Let $Q$ be the quintic 3-fold, which is a Calabi-Yau hypersurface in $\bP^4$, and let $\check{Q}$ be the mirror of $Q$.
Candelas, de la Ossa, Green, and Parkes computed $\check{F}_0^{\check{Q}}$
and the mirror map explicitly, and obtained a conjectural formula of the number of rational curves of arbitrary degree in $Q$;
this formula was proved independently by Givental \cite{Gi96} and Lian-Liu-Yau \cite{LLY1},
who later extended their results to Calabi-Yau complete intersections in projective toric manifolds
\cite{Gi98, LLY2, LLY3}. Their proofs rely on a good understanding of genus-zero Gromov-Witten theory of $Q$.
The mirror formula for $F_1^Q$ was conjectured by Bershadsky-Cecotti-Ooguri-Vafa \cite{BCOV} and first proved by Zinger \cite{Zi09}.
Combining the techniques of BCOV, results of Yamaguchi-Yau \cite{YY04}, and boundary conditions,
Huang-Klemm-Quacken \cite{HKQ} proposed a mirror conjecture on $F_g^Q$ up to $g=51$. 
Maulik-Pandharipande provided a mathematical determination of Gromov-Witten invariants of $Q$ in all genera and
degrees \cite{MaPa}.

In contrast, higher genus Gromov-Witten invariants of {\em toric} Calabi-Yau 3-folds (which must be non-compact)
are much better understood. In general, Gromov-Witten invariants are defined for projective manifolds (or  more generally
compact almost K\"{a}hler manifolds), but in the toric case one may use localization to define Gromov-Witten invariants of
certain non-compact toric manifolds. By virtual localization \cite{GrPa}, all genus Gromov-Witten invariants of
toric manifolds can be reduced to Hodge integrals, which can be evaluated by effective algorithms.
When the toric manifold $\cX$ is a Calabi-Yau 3-fold, the Topological Vertex \cite{AKMV, LLLZ,MOOP} provides a much more efficient
algorithm of computing Gromov-Witten invariants of $\cX$, as well as open Gromov-Witten invariants of $\cX$
relative to an Aganagic-Vafa Lagrangian brane $\cL$ (defined in \cite{KL, DF, Liu02, LLLZ} in several ways), in all genera and degrees. The topological B-model on the mirror
$\check{\cX}$ of a smooth semi-projective toric Calabi-Yau 3-fold $\cX$ can be reduced to a theory on the mirror curve of $\cX$.
Under mirror symmetry, $F_0^{\cX}$ corresponds to integrals of 1-forms on the mirror curve along loops,
whereas the generating function $F_{0,1}^{\cX,\cL}$ of genus-zero open Gromov-Witten invariants
(which count holomorphic disks in $\cX$ bounded by $\cL$) corresponds to integrals of 1-forms on the mirror curve along paths \cite{AV, AKV}.
Based on the work of Eynard-Orantin \cite{EO07} and Mari\~{n}o \cite{Ma},
Bouchard-Klemm-Mari\~{n}o-Pasquetti \cite{BKMP09} proposed a new formalism of the topological B-model on $\cX$ in terms
of the Eynard-Orantin invariants $\omega_{g,n}$ of the mirror curve, and conjectured a precise correspondence, known as the Remodeling Conjecture,
between $\omega_{g,n}$ (where $n>0$) and the generating function $F_{g,n}^{\cX,\cL}$ of
open Gromov-Witten invariants counting holomorphic maps from bordered Riemann surfaces with
$g$ handles and $n$ holes to $\cX$ with boundaries in $\cL$. This can be viewed as a version
of all genus open mirror symmetry; the closed sector of the Remodeling Conjecture relates $\omega_{g,0}$ to $F_g^{\cX}$.
The open string part of the Remodeling Conjecture for $\bC^3$ was proved independently
by  L. Chen \cite{Ch09} and J. Zhou \cite{Zh09}. The free energy part of the Remodeling
Conjecture for $\bC^3$ \cite{BS12} was proved independently by Bouchard-Catuneanu-Marchal-Su{\l}kowski
\cite{BCMS} and S. Zhu \cite{Zhu}. B. Eynard and N. Orantin provided a proof of the
Remodeling Conjecture for general smooth semi-projective toric Calabi-Yau 3-folds in \cite{EO15}.

Bouchard-Klemm-Mari\~{n}o-Pasquetti have extended the Remodeling Conjecture to toric Calabi-Yau 3-orbifolds \cite{BKMP10}.
Topological string theory on orbifolds was constructed decades ago by physicists \cite{DHVW1, DHVW2}, and many works followed in both mathematics and physics (e.g. \cite{HV, DFMS, BR, Kn, Roan, CV}).  Zaslow discussed orbifold quantum cohomology \cite{Zas93} along
with many examples, in both abelian and non-abelian quotients. Later, the mathematical definition of orbifold Gromov-Witten theory and quantum
cohomology was laid by Chen-Ruan \cite{CR02} in the symplectic setting and Abramovich-Graber-Vistoli \cite{AGV02, AGV08} in the algebraic setting.
Orbifold Gromov-Witten invariants of a toric Calabi-Yau 3-orbifold $\cX$, and
open orbifold Gromov-Witten invariants of $\cX$ relative to an Aganagic-Vafa
Lagrangian brane in $\cX$, can be expressed in terms of the orbifold Gromov-Witten
vertex, which is a generating function of abelian Hurwitz Hodge integrals \cite{Ro14}.
The algorithm of the Topological Vertex is equivalent to the Gromov-Witten(GW)/Donaldson-Thomas(DT) correspondence for smooth toric Calabi-Yau 3-folds \cite{MNOP1}. The GW/DT correspondence
has been conjectured for Calabi-Yau 3-orbifolds satisfying the Hard Lefshetz condition \cite{BCY}
and proved for toric Calabi-Yau 3-orbifolds with transverse $A_n$-singularities
\cite{Zo15, RZ13, RZ14, R14}. This provides an efficient algorithm of computing closed and open Gromov-Witten
invariants of toric Calabi-Yau 3-orbifolds with transverse $A_n$-singularities, in all genera and degrees.
The precise statement of GW/DT correspondence  for toric Calabi-Yau 3-orbifolds which
do not satisfy the Hard Lefshetz condition (e.g. $[\bC^3/\bmu_3]$, where $\bmu_3$ acts diagonally)
is not known even conjecturally. The Remodeling Conjecture provides
a recursive algorithm to compute closed and open Gromov-Witten invariants of all semi-projective toric Calabi-Yau 3-orbifolds in all genera. Explicit predictions of open orbifold Gromov-Witten
invariants of $[\bC^3/\bmu_3]$ are given in \cite{BKMP10}.

\subsection{Statement of the main result and outline of the proof}
In this paper, we study the Remodeling Conjecture
for all affine toric Calabi-Yau 3-orbifolds $[\bC^3/G]$, where
$G$ can be any finite subgroup of the maximal torus of $SL(3,\bC)$.
We consider a general framed Aganagic-Vafa brane $(\cL,f)$,
where $\cL \cong [(S^1\times \bC)/G]$ and $f\in \bZ$.
We define open-closed orbifold Gromov-Witten invariants of
$\cX=[\bC^3/G]$ relative to $(\cL,f)$ as certain
equivariant relative orbifold invariants of a toric Calabi-Yau
3-orbifold $\cY_f$ relative to a divisor $\cD_f\cong [\bC^2/\bmu_m]$,
where $\bmu_m\cong \bZ_m$ is the stabilizer of the $G$-action on
$S^1\times \{0\}\subset S^1\times \bC$, and the coarse moduli space
$\bC^2/\bmu_m$ of $\cD_f$ is the $A_{m-1}$ surface singularity.
We define generating functions of open-closed Gromov-Witten invariants:
$$
F_{g,n}^{\cX,(\cL,f)}(\btau;X_1,\ldots, X_n)
$$
which are $H^*_{\CR}(\cB\bmu_m;\bC)^{\otimes n}$-valued formal
power series in A-model closed string coordinates $\btau=(\tau_1,\ldots, \tau_p)$
and A-model open string coordinates $X_1,\ldots, X_n$;
here $H^*_{\CR}(\cB\bmu_m;\bC)\cong \bC^m$ is the Chen-Ruan orbifold cohomology
of the classifying space $\cB\bmu_m$ of $\bmu_m$.
We use the Eynard-Orantin invariants $\omega_{g,n}$ of the framed mirror curve
to define B-model potentials
$$
\check{F}_{g,n}(\btau;X_1,\ldots,X_n)
$$
which are $H^*_{\CR}(\cB\bmu_m;\bC)^{\otimes n}$-valued
functions in B-model closed string flat coordinates
$\btau=(\tau_1,\ldots, \tau_p)$ and B-model open string
coordinates $X_1,\ldots,X_n$, analytic in an open
neighborhood of the origin in $\bC^p\times\bC^n$. The mirror map
relates B-model flat coordinates $(\tau_1,\ldots,\tau_p)$
to the complex parameters $(q_1,\ldots, q_p)$ of the framed mirror curve.
Our main result is:

\medskip

\paragraph{\bf Theorem \ref{thm:main}}
{\it For any $g\in \bZ_{\geq 0}$ and $n\in \bZ_{>0}$,}
\begin{equation}\label{eqn:main-eqn}
\check{F}_{g,n}(\btau;X_1,\ldots, X_n) =(-1)^{g-1+n}F_{g,n}^{\cX, (\cL,f)}(\btau;X_1,\ldots, X_n).
\end{equation}
This is indeed more general than the original conjecture in
\cite{BKMP10}, which covers the $m=1$ case, i.e., when $\cL$ is
on an effective leg.

We now give an outline of our proof of the above theorem. For simplicity,
we consider the stable case $2g-2+n>0$ in this outline. (The unstable cases
$(g,n)=(0,1)$, $(0,2)$ will be treated separately.)  The proof consists of three steps:
\begin{enumerate}
\item[1.]({\em A-model graph sum}) In \cite{FLZ1}, we used Tseng's orbifold quantum Riemann-Roch theorem \cite{Ts10} to write
down a graph sum formula for the total descendant equivariant Gromov-Witten potential of $\cX$. We use this formula
and localization to derive a graph sum formula for the A-model potential $F^{\cX,(\cL,f)}_{g,n}$:
$$
F_{g,n}^{\cX,(\cL,f)}  =\sum_{\vGa\in \bGa_{g,n}(\cX)}\frac{w_A(\vGa)}{|\Aut(\vGa)|}
$$
where $\bGa_{g,n}(\cX)$ is certain set of decorated graphs, $\Aut(\vGa)$ is the automorphism group of the decorated graph $\vGa$,
and $w_A(\vGa)$ is the A-model weight of the decorated graph $\vGa$ defined by \eqref{eqn:wA}.

\item[2.]({\em B-model graph sum})
The Eynard-Orantin invariants $\omega_{g,n}$ can be expressed as a sum over labeled graphs \cite{KO, E11, E14, DOSS}.  We use special geometry to obtain the Taylor series expansion of
the graph sum formula in \cite[Theorem 3.7]{DOSS} in B-model closed string flat coordinates $\btau=(\tau_1,\ldots,\tau_p)$
at $\btau=0$, and derive a graph sum formula for the B-model potential $\check{F}_{g,n}$:
$$
\check{F}_{g,n} = \sum_{\vGa\in \bGa_{g,n}(\cX)}\frac{w_B(\vGa)}{|\Aut(\vGa)|}
$$
where $w_B(\vGa)$ is the B-model weight of the decorated graph $\vGa$ defined by \eqref{eqn:wB}.

\item[3.]({\em Comparison of weights}) For each decorated graph $\vGa \in \bGa_{g,n}(\cX)$, we prove the following identity
relating A-model and B-model weights:
$$
w_B(\vGa) = (-1)^{g-1+n} w_A(\vGa).
$$
\end{enumerate}

\subsection{Remarks on the BKMP Remodeling Conjecture in the general case}
In \cite{FLZ3}, the authors provide a proof of the BKMP Remodeling
Conjecture for all semi-projective toric Calabi-Yau 3-orbifolds.
The proof in \cite{FLZ3} relies on
(i) the quantization formula for the total descendant potential of equivariant GW theory of GKM orbifolds \cite{Zo},
(ii) the B-model graph sum formula in \cite[Theorem 3.7]{DOSS},
(iii) the genus-zero mirror theorem for toric DM stacks \cite{CCIT, CCK}, and
(iv) the genus-zero open mirror theorem for semi-projective toric Calabi-Yau 3-orbifolds \cite{FLT};
(i) is used to derive the A-model graph sum formula, whereas (iii) and (iv)
are used to match the A-model and B-model graph sums.  In the affine case,
the proof in this paper is more direct than the specialization of the proof in \cite{FLZ3} to the affine case:
the proof in this paper relies on Tseng's orbifold quantum Riemann-Roch theorem \cite{Ts10} and (ii), but not on (i), (iii), (iv).
The proof in \cite{FLZ3} also relies on the computation of oscillating integrals on a mirror curve in this
paper: we obtain the desired result by integrating in the Landau-Ginzburg model and dimensional reduction (Theorem \ref{thm:f-zero}).


\subsection{Overview of the paper}
In Section \ref{sec:ABgeometry}, we describe affine toric Calabi-Yau 3-orbifolds
and their mirror curves.  In Section \ref{sec:vertex},  for each affine toric Calabi-Yau 3-orbifold $\cX$ and a
framed Aganagic-Vafa A-brane $(\cL,f)$, we construct a relative Calabi-Yau 3-orbifold $(\cY_f, \cD_f)$, where $\cX=\cY_f\setminus\cD_f$.
In Section \ref{sec:Chen-Ruan}, we describe the Chen-Ruan orbifold cohomology
of the classifying space $\BG$ of the finite abelian group $G$ and the $\tT$-equivariant  Chen-Ruan orbifold cohomology of $\cX$ and $\cY_f$.
In Section \ref{sec:Amodel}, we give the precise definition of the A-model partition functions $F_{g,n}^{\cX,(\cL,f)}$ as a generating function
of equivariant relative Gromov-Witten invariants of $(\cY_f,\cD_f)$, and derive the A-model graph sum formula (Theorem \ref{thm:Asum}).
In Section \ref{sec:mirror-curve}, we study the geometry and topology of the mirror curve.
In particular, we clarify the choice of A-cycles and B-cycles, and the definition of the B-model
flat coordinates. In Section \ref{sec:Bmodel}, we give the precise definition of B-model partition functions $\check{F}_{g,n}$,
derive the B-model graph sum formula (Theorem \ref{thm:Bsum}), and complete the proof of the main result (Theorem \ref{thm:main}).

\subsection*{Acknowledgments} We thank Vincent Bouchard,
Igor Dolgachev, Bertrand Eynard, Robert Friedman, Nicolas Orantin, Dustin Ross and Jie Zhou for helpful comments
and conversations.
The research of the first author is partially supported by NSF DMS-1206667.
The research of the second and third authors is partially supported by NSF DMS-1159416.

\section{Affine toric Calabi-Yau 3-orbifolds and their Mirrors}\label{sec:ABgeometry}

\subsection{The A-model geometry}\label{sec:A-geometry}
Let $\bT=(\bC^*)^3$, $N=\Hom(\bC^*,\bT)\cong \bZ^3$, and let
$M=\Hom(\bT,\bC^*)=\Hom(N,\bZ)$.
Let $\sigma\subset N_\bR := N\otimes_\bZ \bR\cong \bR^3$
be a simplicial cone spanned by $b_1, b_2, b_3\in N$,
such that the simplicial affine toric variety
$X_\si:=\mathrm{Spec}\, \bC[\sigma^\vee\cap M]$ has trivial
canonical divisor. Then there exists $u\in M$ such that
$\langle u,b_1\rangle = \langle u,b_2\rangle = \langle u,b_3\rangle =1$.
We may choose a $\bZ$-basis $\su_1,\su_2,\su_3$ of $M$ such that
$\su_3=u$. Let $e_1,e_2,e_3$ be the dual $\bZ$-basis of $N$. Then
$$
b_1 = re_1-se_2+e_3,\quad
b_2 = me_2+e_3,\quad
b_3 = e_3,
$$
where $r$ and $m$ are positive integers and
$s\in \{0,1,\ldots, r-1\}$.

We have a short exact sequence of abelian groups
$$
1\to G\to \tT=(\bC^*)^3\stackrel{\phi}{\to} \bT=(\bC^*)^3\to 1
$$
where
$$
\phi(\tit_1, \tit_2, \tit_3)=(\tit_1^r, \tit^{-s}_1 \tit_2^m,
\tit_1\tit_2\tit_3).
$$

For $i=1,2,3$, let $\chi_i: G\to \bC^*$ be the projection to the $i$-th factor.
Then $\chi_1,\chi_2,\chi_3\in G^*=\Hom(G,\bC^*)$, and $\chi_1\chi_2\chi_3=1$.
Given a positive integer $r$, let $\bmu_r:=\{ z\in \bC^*: z^r=1\}\cong \bZ_r$ be the group of $r$-th roots of unity.
The image of $\chi_1$ is $\bmu_r$ and the kernel of $\chi_1$ is
isomorphic to $\bmu_m$. We have a short exact sequence of
finite abelian groups
$$
1\to \bmu_m \to G  \stackrel{\chi_1}{\longrightarrow}\bmu_r \to 1.
$$
Then $X_\si= \bC^3/G$ which is the coarse moduli space of $\cX:=[\bC^3/G]$.
Let $N_\si= \bZ b_1 \oplus \bZ b_2 \oplus \bZ b_3$.
Define
$$
\Bsi=\{ v\in N: v=c_1 b_1 + c_2 b_2 + c_3 b_3, 0\leq c_i<1\}.
$$
There is a bijection $\Bsi \longrightarrow N/N_\si$ given by
$v\mapsto v+ N_\si$; there is a bijection $\Bsi \longrightarrow G$ given by
\begin{equation}\label{eqn:Box-to-G}
c_1 b_1 + c_2 b_2 + c_3 b_3\mapsto (e^{2\pi\sqrt{-1}c_1}, e^{2\pi\sqrt{-1}c_2}, e^{2\pi\sqrt{-1}c_3}).
\end{equation}
For example,  for $j\in \{1,\ldots,m-1\}$ we have
$$
(0,j,1) =\frac{j}{m}b_2 + (1-\frac{j}{m})b_3 \in \Bsi,\quad
(1, e^{2\pi\sqrt{-1}\frac{j}{m}}, e^{-2\pi\sqrt{-1}\frac{j}{m}}) \in G.
$$
Define $\age: \Bsi \to \{0,1,2\}$ by
$$
c_1 b_1 + c_2 b_2 + c_3 b_3 \mapsto c_1+ c_2 + c_3.
$$
Then we have a disjoint union
$$
\Bsi =\{0\}\cup \{v\in \Bsi: \age(v)=1\} \cup \{ v\in \Bsi:\age(v)=2\}.
$$

Given $h\in G$ and $i\in \{1,2,3\}$, define $c_i(h)\in [0,1)$ and $\age(h)\in \{0,1,2\}$ by
\begin{equation}\label{eqn:ci-h}
e^{2\pi\sqrt{-1}c_i(h)} =\chi_i(h),
\end{equation}
\begin{equation}\label{eqn:age-h}
\age(h)= c_1(h)+c_2(h)+c_3(h).
\end{equation}
The inverse map of \eqref{eqn:Box-to-G} is  given by $G\longrightarrow \Bsi$,
$$
h\mapsto c_1(h)b_1+c_2(h)b_2 + c_3(h) b_3.
$$

\subsection{The B-model geometry} \label{sec:B-geometry}
There exist $(m_1,n_1),\ldots, (m_p,n_p)\in \bZ^2$ such that
$$
\{v\in \Bsi:\age (v)=1\} = \{ (m_a,n_a,1): a=1,\ldots,p\}.
$$
We define $b_{3+a}=(m_a,n_a,1)$ for $a=1,\ldots, p$, and define
$$
H_f(X,Y,q)=X^r Y^{-s-rf} + Y^m + 1 +\sum_{a=1}^p q_a X^{m_a} Y^{n_a-m_a f}.
$$

The mirror of $\cX$ is a hypersurface in $\bC^2\times (\bC^*)^2$:
$$
\check{\cX}_q =\{ (u,v,X,Y)\in \bC^2\times (\bC^*)^2: H_f(X,Y,q)-uv=0 \}.
$$
This is a family of non-compact Calabi-Yau 3-folds parametrized
by $q=(q_1,\ldots, q_p)$. Different framing $f\in \bZ$ gives
isomorphic $\check{\cX}_q$ but different superpotential $X:\check{\cX}_q\to \bC^*$.
The framed mirror curve is
$$
\Si_q=\{(X,Y)\in (\bC^*)^2: H_f(X,Y,q)=0\}.
$$
Then
\begin{enumerate}
\item[(i)] $\check{\cX}_q$ is smooth iff $\Si_q$ is smooth.
\item[(ii)] All the branch points of $X:\Si_q\to \bC^*$ are simple iff
the critical points of $X: \check{\cX}_q \to \bC^*$ are isolated and non-degenerate.
\end{enumerate}
Note that $\Si_0 =\{ (X,Y)\in (\bC^*)^2: X^r Y^{-s-rf} + Y^m +1=0\}$ is smooth and all the branch points
of $X:\Si_0\to \bC^*$ are simple.  So (i) and (ii) hold
for small enough $q$.

\begin{remark}
When $m=1$, $G=\bmu_r$, $q_a=0$, the framed mirror curve is given by
$$
X^r Y^{-s-rf} + Y + 1 =0,
$$
or equivalently,
\begin{equation} \label{eqn:effective}
Y^{s+rf}(1+Y) + X^r =0.
\end{equation}
Up to sign, which is a matter of convention,  \eqref{eqn:effective}
agrees with the following framed mirror curve given
by Equation (3.28) of \cite{BSLM13}:
$$
Y^{s+rf}(1-Y)-X^r =0.
$$
In this paper, we assume the framing $f$ is a positive integer.
\end{remark}

We define \begin{equation}\label{eqn:weights}
w_1 =\frac{1}{r}, \quad
w_2 =\frac{s+rf}{rm}, \quad
w_3 = -w_1 -w_2 = \frac{-s-rf-m}{rm}.
\end{equation}

For later convenience, we fix two explicit bijections (which
are not necessarily group homomorphisms)
$\iota:\bZ_r\times \bZ_m$ and $\iota^*: G^*\to \bZ_r\times \bZ_m$.
\begin{itemize}
\item $\iota: \bZ_r\times \bZ_m \to G,\quad  (j,\ell)\mapsto \eta_1^j \eta_2^\ell$, where
\begin{equation}\label{eqn:eta}
\eta_1 = (e^{2\pi\sqrt{-1}w_1}, e^{2\pi\sqrt{-1}w_2}, e^{2\pi\sqrt{-1}w_3}),\quad
\eta_2 = (1, e^{2\pi\sqrt{-1}/m}, e^{-2\pi\sqrt{-1}/m}).
\end{equation}
\item $\iota^*: \bZ_r\times \bZ_m \to G^*, \quad  (j,\ell)\mapsto \chi_1^j \chi_2^\ell$, where
$\chi_1,\chi_2\in G^*$ are defined as in Section \ref{sec:A-geometry}.
\end{itemize}

\section{The 1-leg framed orbifold topological vertex} \label{sec:vertex}

Let $L=\{(z_1,z_2, z_3)\in \bC^3: |z_1|^2-|z_2|^2 = c,\ |z_1|^2-|z_3|^2=c,\
\Im(z_1z_2z_3)=0\}$, where $c>0$ and $\Im(z)$ means the imaginary part of $z$. Then $L$ is a special Lagrangian
of $\bC^3$ (Harvey-Lawson \cite{HL82}), and  $\cL:= [L/G]$
is a special Lagrangian sub-orbifold of the affine toric Calabi-Yau 3-orbifold $\cX=[\bC^3/G]$.
A framed Aganagic-Vafa A-brane of $\cX$ is a pair $(\cL,f)$, where $\cL\subset \cX$
is as above and $f\in \bZ$.
The 1-leg framed orbifold topological vertex is a relative
toric Calabi-Yau 3-orbifold $(\cY_f, \cD_f)$ associated
to $(\cX,\cL,f)$.

\subsection{The toric graph}
$N_\si =\bZ b_1\oplus \bZ b_2 \oplus \bZ b_3$ is a sublattice
of $N=\bZ e_1 \oplus \bZ e_2 \oplus \bZ e_3$ of index $rm$.
$\{ b_1,b_2,b_3 \}$ is a $\bQ$-basis of $N_\bQ:=N\otimes_{\bZ}\bQ$.
Let $\{\sw_1, \sw_2, \sw_3\}$ be the dual $\bQ$-basis of
$M_\bQ:= M\otimes_{\bZ}\bQ$. Then
$$
\sw_1 = \frac{1}{r}\su_1,\quad
\sw_2 = \frac{s}{rm}\su_1 +\frac{1}{m}\su_2,\quad
\sw_3 = -\frac{s+m}{rm}\su_1 -\frac{1}{m}\su_2 + \su_3.
$$
Note that
$$
\sw_1+\sw_2+\sw_3= \su_3
$$
is the weight of the $\bT$-action on $T_{\fp}\cX$, where $\fp = \BG$ is
the unique $\bT$-fixed point in the toric 3-orbifold $\cX$.
Let $\bT'$ be the kernel of $\su_3 \in M=\Hom(\bT,\bC^*)$.

The relative formal toric Calabi-Yau (FTCY) graph of the
1-leg framed orbifold topological vertex is shown in Figure 1.
This generalizes the 1-leg framed topological vertex \cite{LLLZ}.
\begin{figure}[h]
\begin{center}
\psfrag{w1}{\footnotesize $\sw_1=\frac{\su_1}{r}$}
\psfrag{w2}{\footnotesize $\sw_2= \frac{s\su_1}{rm}+\frac{\su_2}{m}$}
\psfrag{w3}{\footnotesize $\sw_3= -\frac{m+s}{mr}\su_1 -\frac{\su_2}{m}$}
\psfrag{-u1}{\footnotesize $-\su_1$}
\psfrag{u2-fu1}{\small $\frac{\su_2-f\su_1}{m}$}
\psfrag{-u2+fu1}{\small $\frac{-\su_2+f\su_1}{m}$}
\includegraphics[scale=0.8]{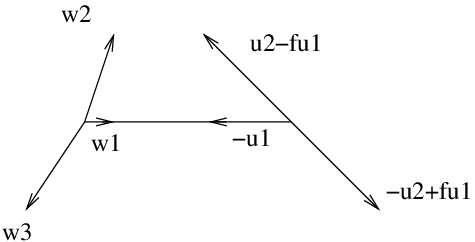}
\end{center}
\caption{The toric graph of the 1-leg framed orbifold topological vertex}
\end{figure}

\subsection{The fan} \label{sec:fan}
The toric graph in Figure 1 defines a relative toric Calabi-Yau 3-orbifold
$\cY_f$ which is a partial compactification of $\cX$. We now
describe the fan $\Si_f$ defining the coarse moduli space
$Y_f$ of $\cY_f$. Let
$$
v_1= b_1 = re_1-se_2 +e_3,\quad
v_2= b_2 = me_2+e_3,\quad
v_3= b_3 = e_3,\quad
v_4 = -e_1 -fe_2.
$$
Let $\si$ be the 3-cone spanned by
$v_1, v_2, v_3$, as before; let $\si'$ be
the 3-cone spanned by $v_2, v_3, v_4$.
Let $\tau$ be the 2-cone $\si\cap \si'$.
Let $\rho_i$ be the 1-cone spanned by $v_i$. Then
$$
\Si_f(1) = \{\rho_1,\rho_2,\rho_3,\rho_4\},\quad
\Si_f(3) = \{\si,\si'\}.
$$
The coarse moduli space $D_f$ is the $\bT$-invariant
divisor in $Y_f$ associated to $\rho_4$.
The pair $(\cY_f,\cD_f)$ is a relative toric Calabi-Yau
3-orbifold, where the relative Calabi-Yau condition is
$K_{\cY_f} + \cD_f =0$.
Let $\cD_i$ be the $\bT$-invariant divisor associated to $\rho_i$. Then
$$
K_{\cY_f} = -\cD_1-\cD_2-\cD_3-\cD_4,\quad
K_{\cY_f}+\cD_f = -\cD_1 -\cD_2 -\cD_3.
$$
Note that $K_{\cX_f}+\cD_f$ is the principal $\bT$-divisor associated
to $\chi^{\su_3} \in \bC[M]$.

\subsection{The root construction} \label{sec:root}
In this subsection, we give another description of the relative toric 3-orbifold
$(\cY_f,\cD_f)$. Let $\fp_0=[0,1]$ and $\fp_\infty=[1,0]$ be the
two torus fixed point in $\bP(1,r)$, where $\bP(1,r)$ is the weighted projective line with an orbifold point of order $r$.  Then $\fp_0$ is
the unique stacky point in $\bP(1,r)$, and
any $\bC^*$-equivariant line bundle on $\bP(1,r)$
is of the form
$$
\cO_{\bP(1,r)}(a \fp_0 + b\fp_\infty),
$$
where $a,b\in \bZ$. We have
$$
\deg \cO_{\bP(1,r)}(a \fp_0 + b\fp_\infty) = \frac{a}{r}+b.
$$
Let $p:\fl \to \bP(1,r)$ be the $\bZ_m$-gerbe obtained by
applying the $m$-th root construction to the equivariant line bundle
$$
\cO_{\bP(1,r)}(s\fp_0 + f\fp_\infty).
$$
Let $\cL_2$ be the tautological line bundle over $\fl$, so that
$$
\cL_2^{\otimes m}= p^* \cO_{\bP(1,r)}(s\fp_0+f\fp_\infty).
$$
Then
$$
\deg \cL_2 = \frac{s}{rm} +\frac{f}{m} = w_2.
$$
Let
$$
\cL_3:= \cL_2^{-1}\otimes p^*\cO_{\bP(1,r)}(-\fp_0).
$$
Then
$$
\deg \cL_3= -\frac{s}{rm}-\frac{f}{m} -\frac{1}{r} = w_3.
$$
The toric 3-orbifold $\cY_f$ is total space of the rank 2 vector bundle
$$
V_f:= \cL_2 \oplus \cL_3 \to \fl.
$$
The divisor $\cD_f$ is the fiber of $V_f$ over the stacky point
$p^{-1}(\fp_\infty)$ in $\fl$.

When $G$ is trivial, we have $r=m=1$, $s=0$, $\fl=\bP(1,r)=\bP^1$, and
$V_f= \cO_{\bP^1}(f)\oplus \cO_{\bP^1}(-f-1) \to \bP^1$.

\section{Chen-Ruan orbifold cohomology}\label{sec:Chen-Ruan}

In this section, we describe the Chen-Ruan orbifold cohomology \cite{CR04}
of the classifying space
$\BG$ of the finite abelian group $G$ and the $\tT$-equivariant  Chen-Ruan orbifold
cohomology of $\cX=[\bC^3/G]$ and $\cY_f$.
The Chen-Ruan orbifold cohomology of the classifying space
of any finite group is described in \cite{JK}.

\subsection{Chen-Ruan orbifold cohomology of $\BG$}
The inertia stack of $\BG$ is
$$
\IBG = \bigcup_{h\in G}(\BG)_h,
$$
where
$$
(\BG)_h =[\{h\}/G]\cong \BG.
$$
As a graded vector space over $\bC$,
$$
H^*_\CR(\BG;\bC) = H^*(\IBG;\bC) = \bigoplus_{h\in G} H^0((\BG)_h;\bC),
$$
where $H^0((\BG)_h;\bC)=\bC\one_h$.
The orbifold Poincar\'{e} pairing of $H^*_\CR(\BG;\bC)$ is given by
$$
\langle\one_h, \one_{h'}\rangle = \frac{\delta_{h^{-1},h'}}{|G|}.
$$
The orbifold cup product of $H^*_\CR(\BG;\bC)$ is given by
$$
\one_h \star \one_{h'} = \one_{h h'}.
$$

We now define a canonical basis for the semi-simple algebra
$H^*_\CR(\BG;\bC)$.
Given a character $\gamma\in G^*=\Hom(G,\bC^*)$, define
$$
\phi_\gamma :=\frac{1}{|G|}\sum_{h\in G}\chi_\gamma(h^{-1})\one_h.
$$
Then
$$
H^*_\CR(\BG;\bC) =\bigoplus_{h\in G}\bC \one_h  =\bigoplus_{\gamma \in G^*}\bC \phi_\gamma.
$$
Recall that we have the orthogonality of characters:
\begin{enumerate}
\item For any $\gamma,\gamma'\in G^*$,
$\displaystyle{
\frac{1}{|G|} \sum_{h\in G} \chi_{\gamma}(h^{-1})\chi_{\gamma'}(h)=\delta_{\gamma,\gamma'}
},$
\item For any $h, h'\in G$,
$\displaystyle{
\frac{1}{|G|}\sum_{\gamma\in G^*}\chi_\gamma(h^{-1})\chi_\gamma(h') =\delta_{h,h'}.
}$
\end{enumerate}
Therefore,
$$
\langle \phi_{\gamma}, \phi_{\gamma'}\rangle = \frac{\delta_{\gamma,\gamma'}}{|G|^2},
$$
and
$$
\phi_{\gamma}\star\phi_{\gamma'}= \delta_{\gamma, \gamma'}\phi_{\gamma}.
$$
Then $\{ \phi_\gamma: \gamma\in G^*\}$ is a canonical basis
of $H^*_\CR(\BG;\bC)$.

\subsection{Equivariant Chen-Ruan orbifold cohomology of $\cX=[\bC^3/G]$}
Given any $h\in G$, define $c_i(h)\in [0,1)\cap \bQ$ and $\age(h)\in \{0,1,2\}$
by Equation \eqref{eqn:ci-h} and \eqref{eqn:age-h} in

Section \ref{sec:B-geometry}, respectively.
Let $(\bC^3)^h$ denote the $h$-invariant subspace of $\bC^3$. Then
$$
\dim_\bC (\bC^3)^h  = \sum_{i=1}^3 \delta_{c_i(h),0}.
$$
The inertial stack of $\cX$ is
$$
\IX  = \bigcup_{h\in G}\cX_h,
\quad\textup{where }
\cX_h =[(\bC^3)^h/G].
$$
In particular,
$$
\cX_1 =[\bC^3/G]=\cX.
$$
As a graded vector space over $\bC$,
$$
H^*_\CR(\cX;\bC) = \bigoplus_{h\in G} H^*(\cX_h; \bC)[2\age(h)] =\bigoplus_{h\in G}\bC \one_h,
$$
where $\deg(\one_h)=2\age(h) \in \{0,2, 4\}$.
The orbifold Poincar\'{e} pairing of the (non-equivariant) Chen-Ruan
orbifold cohomology $H_\CR^*(\cX;\bC)$ is given by
$$
\langle \one_{h}, \one_{h'}\rangle_{\cX} =\frac{1}{|G|}\delta_{h^{-1},h'}
\cdot \delta_{0,\dim_\bC \cX_h}.
$$

Let $\cR=H^*(\cB\tT;\bC)=\bC[\sw_1,\sw_2,\sw_3]$, where
$\sw_1,\sw_2,\sw_3$ are the first Chern classes of the universal line bundles over $\cB\tT$.
The $\tT$-equivariant Chen-Ruan orbifold cohomology $H^*_{\CR,\tT}([\bC^3/G];\bC)$
is an $\cR$-module.  Given $h\in G$, define
$$
\be_h := \prod_{i=1}^3 \sw_i^{\delta_{c_i(h),0}} \in \cR.
$$
In particular,
$$
\be_1 = \sw_1\sw_2\sw_3.
$$
Then the $\tT$-equivariant Euler class of $\zero_h:=[0/G]$ in $\cX_h = [(\bC^3)^h/G]$ is
$$
e_{\tT}(T_{\zero_h}\cX_h) = \be_h \one_h \in H^*_{\tT}(\cX_h;\bC) = \cR \one_h.
$$

Let $\chi_1,\chi_2,\chi_3\in G^*$ be defined as in Section \ref{sec:A-geometry}.
The image of $\chi_i:G\to \bC^*$ is $\bmu_{r_i}$ for some positive integer $r_i$.
In particular $r_1=r$.
Define
$$
\cR' =\bC[\sw_1^{1/r_1},\sw_2^{1/r_2}, \sw_3^{1/r_3}]
$$
which is a finite extension of $\cR$. Let
$$
\cQ=\bC(\sw_1,\sw_2,\sw_3),\quad
\cQ'=\bC(\sw_1^{1/r_1}, \sw_2^{1/r_2}, \sw_3^{1/r_3})
$$
be the fractional fields of $\cR$, $\cR'$, respectively.
The $\tT$-equivariant Poincar\'{e} pairing of
$H^*_{\tT,\CR}(\cX;\bC)\otimes_{\cR}\cQ$ (which is isomorphic
to $H^*_{\CR}(\cX;\cQ)$ as a vector space over $\cQ$) is given by
$$
\langle \one_{h}, \one_{h'}\rangle_{\cX} =\frac{1}{|G|}\cdot\frac{\delta_{h^{-1},h'} }{\be_h}\in \cQ.
$$
The $\tT$-equivariant orbifold cup product  of $H^*_{\tT,\CR}(\cX;\bC)\otimes_{\cR}\cQ$ is given by
$$
\one_h \star_{\cX} \one_{h'} = \Bigl(\prod_{i=1}^3 \sw_i^{c_i(h)+c_i(h')-c_i(hh')}\Bigr) \one_{hh'}.
$$

Define
\begin{equation}\label{eqn:bar-one}
\bar{\one}_h:= \frac{\one_h}{\prod_{i=1}^3 \sw_i^{c_i(h)}} \in H^*_{\tT,\CR}(\cX;\bC)\otimes_{\cR}\cQ'.
\end{equation}
Then
$$
\langle \bar{\one}_h, \bar{\one}_{h'}\rangle_{\cX} =
\frac{\delta_{h^{-1},h'}}{|G|\sw_1\sw_2\sw_3},\quad
\bar{\one}_h \star_{\cX}\bar{\one}_{h'} =\bar{\one}_{hh'}.
$$

We now define a canonical basis for the semisimple algebra
$H^*_{\tT,\CR}(\cX;\bC)\otimes_{\cR}\cQ'$.
Given $\gamma\in G^*$, define
\begin{equation}\label{eqn:bar-phi}
\bar{\phi}_\gamma := \frac{1}{|G|}\sum_{h\in G} \chi_\gamma(h^{-1}) \bar{\one}_h.
\end{equation}
Then
$$
\langle \bar{\phi}_{\gamma},\bar{\phi}_{\gamma'}\rangle_{\cX} = \frac{\delta_{\gamma\gamma'}}{|G|^2\sw_1\sw_2\sw_3 },
\quad
\bar{\phi}_{\gamma}\star_{\cX} \bar{\phi}_{\gamma'} =  \delta_{\gamma \gamma'} \bar{\phi}_{\gamma}.
$$
So $\{\bar{\phi}_\gamma: \gamma\in G^*\}$ is a canonical basis of $H^*_{\tT,\CR}(\cX;\bC)\otimes_{\cR}\cQ'$.

\subsection{Equivariant Chen-Ruan orbifold cohomology of $\cY_f$}
We use the notation in Section \ref{sec:fan} and Section \ref{sec:root}: the coarse moduli
space $Y_f$ of the toric 3-orbifold $\cY_f$ is defined by
a simplicial fan $\Si_f$ with two 3-dimensional cones $\si$, $\si'$.
Let $\cX=[\bC^3/G]$ and $\cX'=[\bC^3/\bmu_m]$ be the affine toric
sub-orbifolds of $\cY_f$ associated to $\si$ and $\si'$, respectively.
As $\cQ$-vector spaces,
$$
H^*_{\tT,\CR}(\cY_f;\bC)\otimes_{\cR}\cQ= H^*_\CR(\cY_f;\cQ) = H^*_\CR(\cX;\cQ)\oplus H^*_\CR(\cX';\cQ),
$$
where
$$
H^*_\CR(\cX;\cQ)=\bigoplus_{h\in G}\cQ\one_h,\quad
H^*_\CR(\cX';\cQ) =\cQ \one' \oplus \bigoplus_{k=1}^{m-1}\cQ \one'_\frac{k}{m}.
$$
We have the following isomorphisms of $\cQ$-vector spaces (which do
not preserve the grading and the orbifold cup product):
\begin{eqnarray*}
&& H^*_\CR(\cY_f;\cQ) \cong H^*_\CR(\fl;\cQ),\quad
H^*_\CR(\cX;\cQ) \cong H^*_\CR(\BG;\cQ),\\
&& H^*_\CR(\cX';\cQ) \cong H^*_\CR(\cD_f;\cQ)\cong H^*_\CR(\fp_\infty;\cQ)
=H^*_\CR(\cB\bmu_m;\cQ).
\end{eqnarray*}

\section{A-model Topological String} \label{sec:Amodel}

\subsection{Relative orbifold Gromov-Witten invariants of $(\cY_f,\cD_f)$}
In this section, we define open-closed orbifold Gromov-Witten invariants
of $\cX$ as certain equivariant relative orbifold Gromov-Witten
invariants of $(\cY_f,\cD_f)$.
Recall from Section \ref{sec:root} that the orbifold $\cY_f$ can be viewed
as the total space of the rank 2 vector bundle $V = \cL_2 \oplus \cL_3 \to \fl$, where $\fl$ is a $\bmu_m$-gerbe over $\bP(1,r)$, so equivariant
relative orbifold Gromov-Witten invariants of $(\cY_f,\cD_f)$
are twisted equivariant relative orbifold Gromov-Witten invariants
of $(\fl,\fp_\infty)$.

Relative orbifold Gromov-Witten theory has been
constructed in algebraic geometry by Abramovich-Fantechi \cite{AF}
and in symplectic geometry by Chen-Li-Sun-Zhao \cite{CLSZ}.
We refer to \cite{AF, CLSZ} for the precise definitions of moduli spaces
of relative stable maps to smooth DM stacks.

We first introduce some notation.
We fix non-negative integers $g,\ell$ and $\vmu=((\mu_1,k_1),\ldots, (\mu_n,k_n))$,
where $\mu_j\in \bZ_{>0}$ and $k_j\in\{0,1\ldots, m-1\}$.
\begin{enumerate}
\item Let $\Mbar_{g,\ell}(\cY_f/\cD_f, \vmu)$
(resp. $\Mbar_{g,\ell}(\fl/\fp_\infty,\vmu)$)
be the moduli space of genus $g$ relative stable maps to $(\cY_f,\cD_f)$ (resp.  $(\fl,\fp_\infty)$) with $\ell$ marked points and $n$ relative points, where the relative points are ordered, and the ramification index (resp. monodromy) of the $j$-th relative point is $\mu_j$ (resp. $k_j$)

\item Let $\ev_i: \Mbar_{g,\ell}(\cY_f/\cD_f,\vmu)\to \cI\cY_f$
and $\ev_i:\Mbar_{g,\ell}(\fl/\fp_\infty,\vmu)\to \cI\fl$
be the evaluation map at the $i$-th marked point, where $i=1,\ldots, \ell$.

\item Let $\pi:\cU\to \Mbar_{g,\ell}(\fl/\fp_\infty,\vmu)$ be the  universal
curve, $\cT\to \Mbar_{g,\ell}(\fl/\fp_\infty,\vmu)$ be the universal
target, $F:\cU\to \cT$ be the universal map.

\item There is a contraction map $\cT\to \fl$.  Let $\hF:\cU\to \fl$
be the composition of this contraction map and the universal map
$F:\cU\to \cT$.

\item Let $D=\displaystyle{\bigcup_{1\leq j\leq n} D_j \subset \cU}$ be
the universal relative points corresponding to $\vmu$, and let $D_0=\displaystyle{\bigcup_{\substack{1\leq j\leq n\\ k_j=0}} D_j}
\subset D$.

\item  Let $\bT_f=\{(t, t^f, 1):t\in \bC^*\} \subset \bT\cong (\bC^*)^3$.
This induces
$$
\phi_f:H^*(B\bT;\bQ)=\bQ[\su_1,\su_2,\su_3]=\bQ[\sw_1,\sw_2,\sw_3]
\to H^*(B\bT_f;\bQ)=\bQ[\sv],
$$
$$
\su_1\mapsto \sv,\quad \su_2\mapsto f\sv,\quad \su_3\mapsto 0
$$
or equivalently,
$$
\sw_1\mapsto\frac{1}{r}\sv= w_1\sv,\quad
\sw_2\mapsto \frac{s+rf}{rm}\sv= w_2\sv,\quad
\sw_3\mapsto \frac{-s-rf-m}{rm}\sv= w_3\sv.
$$
We extend $\phi_f$ to
$\phi_f: \cQ=\bC(\su_1,\su_2,\su_3)=\bC(\sw_1,\sw_2,\sw_3)
\to \bC(\sv)$.
\end{enumerate}

For any
$$
\gamma_1,\ldots, \gamma_\ell \in H^*_\CR(\cX;\cQ)
=\bigoplus_{h\in G}\cQ\one_h \subset H^*_\CR(\cY_f;\cQ) \cong H^*_\CR(\fl;\cQ),
$$
we define the $\tT$-equivariant relative orbifold Gromov-Witten invariant
\begin{eqnarray*}
 \langle \gamma_1,\ldots, \gamma_\ell \rangle_{g,\vmu}^{\cX,(\cL,f)}
&:=& \frac{1}{|\Aut(\vmu)|}\cdot \phi_f \Big( \int_{[\Mbar_{g, \ell}(\cX_f/\cD_f,\vmu)^{\tT}]^{\textrm{vir}} }
\frac{(\su_2-f\su_1)^{\sum_{j=1}^n \delta_{k_j,0}} }{e_{\tT}(N^\vir)} \prod_{i=1}^\ell\ev_i^*\gamma_i \Big) \\
&=& \frac{1}{|\Aut(\vmu)|}\cdot \phi_f \Big(\int_{[\Mbar_{g,\ell}(\fl/\fp_\infty,\vmu)^{\tT}]^{\textrm{vir}} }
\frac{e_{\tT}\big(-R\pi_*(\hF^*\cL_2(-D_0)\oplus \hF^*\cL_3)\big)}{e_{\tT}(N^\vir)}
\prod_{i=1}^\ell \ev_i^* \gamma_i \Big)\\
&\in & \bQ(\sv).
\end{eqnarray*}
Note that $\phi_f(\su_2-f\su_1)=0$. The factor
$(\su_2-f\su_1)^{\sum_{j=1}^n \delta_{k_j,0}}$ is included to
cancel such factor in $e_{\tT}(N^\vir)$.

If $\gamma_1,\ldots, \gamma_\ell$ are homogeneous, then
\begin{equation}\label{eqn:v}
\langle \gamma_1,\ldots,\gamma_\ell\rangle_{g,\vmu}^{\cX,(\cL,f)}
= \sv^{\sum_{i=1}^\ell (\deg\gamma_i/2-1)}
\cdot\langle \gamma_1,\ldots,\gamma_\ell\rangle_{g,\vmu}^{\cX,(\cL,f)}\Big|_{\sv=1}.
\end{equation}

\subsection{Descendant orbifold Gromov-Witten invariants of $\cX$} \label{sec:open-closed}
In this (sub)section, we use localization to compute
$\langle\gamma_1,\ldots, \gamma_\ell\rangle_{g,\vmu}^{\cX,(\cL,f)}$ and
obtain an expression in terms of cubic abelian Hurwitz-Hodge
integrals.

We first give some definitions.
For $i=1,2,3$, let $\chi_i\in G^*=\Hom(G,\bC^*)$ be characters of $G$ defined
in Section \ref{sec:A-geometry}, and let $L_{\chi_i}$ be the line bundle
over $\BG$ associated to the irreducible character $\chi_i\in G^*$.
Then $\cX=[\bC^3/G]$ is the total space of
$$
\bigoplus_{i=1}^3 L_{\chi_i} \to \BG.
$$
Let $\Mbar_{g,n}(\BG)$ denote the moduli space of genus $g$, $n$-pointed
twisted stable maps to $\BG$. Let $\pi: \cC_{g,n} \to \Mbar_{g,n}(\BG)$
be the universal curve, and let $F:\cC_{g,n}\to \BG$ be the universal map.
Define
\begin{equation}\label{eqn:egn}
\be_{g,n} := \phi_f \Bigl( e_{\tT}
\Bigl(R\pi_* F^*(\bigoplus_{i=1}^3 L_{\chi_i})\Bigr)\Bigr)
=\prod_{i=1}^3 (w_i\sv)^{(\rank R\pi_* F^* L_{\chi_i}) }
c_{\frac{1}{w_i\sv}}(R\pi_* F^*L_{\chi_i}),
\end{equation}
where $c_t(E)= 1+ tc_1(E)+ t^2 c_2(E) + \cdots $ denotes the Chern polynomial
of a complex vector bundle $E$ (or more generally, an element in the
$K$-theory). The rank of $R\pi_*F^*L_{\chi_i}$ is
constant on a connected component of $\Mbar_{g,n}(\BG)$, but
might be different on different connected components of $\Mbar_{g,n}(\BG)$.

A moduli point in $\Mbar_{g,\ell}(\fl/\fp_\infty,\vmu)$
is represented by a morphism
$$
\rho:(\cC, x_1,\ldots, x_\ell, y_1,\ldots, y_n)\to \fl[k]=\fl\cup \Delta_1\cup \cdots\cup \Delta_k,
$$
where each $\Delta_i$ is a trivial $\bmu_m$-gerbe over $\bP^1$. Let
$$
\Mbar_{g,\ell}^0(\fl/\fp_\infty,\vmu)\subset \Mbar_{g,\ell}(\fl/\fp_\infty,\vmu)
$$
be the substack where the target is $\fl[0]=\fl$. Computations in \cite{RZ13} show that
the contribution from a $\tT$ fixed locus outside $\Mbar_{g,\ell}^0(\fl/\fp_\infty,\vmu)$
vanishes. Therefore,
$$
\langle \gamma_1,\ldots, \gamma_\ell \rangle_{g,\vmu}^{\cX,(\cL,f)}
= \frac{1}{|\Aut(\vmu)|}\cdot
\phi_f \Big(\int_{[\Mbar_{g,\ell}^0(\fl/\fp_\infty,\vmu)^{\tT}]^{\textrm{vir}} }
\frac{e_{\tT}\big(-R\pi_*(\hF^*\cL_2(-D_0)\oplus \hF^*\cL_3)\big)}{e_{\tT}(N^\vir)}
\prod_{i=1}^\ell \ev_i^* \gamma_i \Big).
$$

Suppose that $\rho:(\cC,x_1,\ldots, x_\ell,y_1,\ldots,y_n)\to \fl$ represents
a point in $\Mbar_{g,n+\ell}^0(\fl/\fp_\infty,\vmu)^{\tT}$. Then
$$
\cC=\cC_0\cup\bigcup_{j=1}^n \cC_j,
$$
where $\cC_0$ is an orbicurve of genus $g$, $\cC_1,\ldots,\cC_n$ are
1-dimensional toric orbifolds, $x_1,\ldots, x_\ell\in \cC_0$,
$y_j\in \cC_j$, and $\cC_0$ and $\cC_j$ intersect at a node $z_j$ for
$j=1,\ldots, n$. We view $z_j$ as a point on $\cC_j$ in order to determine its monodromy. Let $\rho_j:=\rho|_{\cC_j}:\cC_j\to \fl$. Then
$\rho_0$ is a constant map to $\fp_0=[0,1]$, and we have
the following diagram for $1\leq j\leq n$:
\begin{equation}\label{eqn:leg}
\begin{CD}
\cC_j@>{\rho_j}>> \fl\\
@VVV  @VVV\\
\bP^1@>{\bar{\rho}_j}>> \bP^1
\end{CD}
\end{equation}
where $\bar{\rho}_j([x,y])=[x^{\mu_j},y^{\mu_j}]$ is the map between coarse moduli spaces,
the monodromy around $y_j$ is $e^{2\pi\sqrt{-1}k_j/m}\in \bmu_m$, and
the monodromy around $z_j$ is
$$
(e^{2\pi\sqrt{-1}\mu_j w_1}, e^{2\pi\sqrt{-1}(\mu_j w_2 -\frac{k_j}{m})} , e^{2\pi\sqrt{-1}(\mu_j w_3 +\frac{k_j}{m})})=\eta_1^{\mu_j} \eta_2^{-k_j} \in G.
$$
For $(d_0,k)\in \bZ\times\bZ_m$, define $h(d_0,k):= \eta_1^{d_0}\eta_2^{-k}\in G$, and define
\begin{equation}\label{eqn:disk-function}
D'(d_0,k) =-\frac{r}{\sv}(-1)^{\lfloor d_0 w_3+\frac{k}{m}\rfloor}
\big(\frac{\sv}{d_0}\big)^{\age(h(d_0,k))-1}
\cdot \frac{\Gamma(d_0 (w_1+w_2)+c_3(h(d_0,k))) }{
\Gamma(d_0 w_1-c_1(h(d_0,k))+1)\Gamma(d_0 w_2 -c_2(h(d_0,k))+1)}.
\end{equation}
Then $D'(d_0,k)\in \bC \sv^{\age(h(d_0,k))-2}$.
Up to sign, $D'(d_0, k)$ is essentially $|G|$ times the disk function in \cite[Section 3.3]{Ro14}.
Note that
$$
(-1)^{\lfloor d_0 w_3+\frac{k}{m}\rfloor}
= (-1)^{d_0 w_3 - c_3(h(d_0,k))+\frac{k}{m}}.
$$

By virtual localization, we have:
\begin{proposition}
\begin{eqnarray*}
\langle \ \rangle^{\cX,(\cL,f)}_{0,(d_0,k)} &=&  \frac{\delta_{1,h(d_0,k)} }{|G|}   D'(d_0,k)\cdot \big(\frac{\sv}{d_0}\big)^2\\
\langle \gamma \rangle^{\cX,(\cL,f)}_{0,(d_0,k)} &=& \langle \gamma, \be_{h(d_0,k)}\one_{h(d_0,k)^{-1}}\rangle_{\cX}
\cdot D'(d_0,k)\cdot \frac{\sv}{d_0}\\
\langle \  \rangle^{\cX,(\cL,f)}_{0, (\mu_1,k_1), (\mu_2,k_2)} &=&
\langle \be_{h(\mu_1,k_1)}\one_{h(\mu_1,k_1)^{-1}},\be_{h(\mu_2,k_2)} \one_{h(\mu_2,k_2)^{-1}}\rangle_{\cX}\\
&&\cdot \frac{D'(\mu_1,k_1) D'(\mu_2,k_2)}{|\Aut((\mu_1,k_1),(\mu_2,k_2))|}\cdot \frac{\sv}{\mu_1+\mu_2}.
\end{eqnarray*}
If $\vmu=((\mu_1,k_1),\ldots, (\mu_n,k_n))$ and $2g-2+\ell+n>0$, then
$$
\langle \gamma_1,\ldots, \gamma_\ell \rangle_{g,\vmu}^{\cX,(\cL,f)}
= \frac{1}{|\Aut(\vmu)|}\prod_{j=1}^{n} D'(\mu_j,k_j)  \int_{\Mbar_{g,\ell+n}(BG)}
\frac{\prod_{i=1}^\ell \ev_i^*\gamma_i}{\be_{g,\ell+n}}
\prod_{j=1}^n  \frac{\ev_{\ell+j}^*(\be_{h(\mu_j,k_j)}\one_{h(\mu_j,k_j)^{-1}})}{
1-\frac{\mu_j}{\sv}\bar{\psi}_{\ell+j}}.
$$
\end{proposition}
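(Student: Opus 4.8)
The plan is to apply the virtual localization formula of Graber--Pandharipande, in the relative orbifold setting of Abramovich--Fantechi \cite{AF}, to the integral over $[\Mbar^0_{g,\ell}(\fl/\fp_\infty,\vmu)^{\tT}]^{\vir}$. By the discussion preceding the proposition, a $\tT$-fixed stable map $\rho:\cC\to\fl$ decomposes as $\cC=\cC_0\cup\bigcup_{j=1}^n\cC_j$, where $\cC_0$ is a genus-$g$ orbicurve contracted to the stacky point $\fp_0$ carrying the $\ell$ marked points and the $n$ nodes $z_1,\ldots,z_n$, and each leg $\cC_j$ is a rigid one-dimensional toric orbifold covering $\fl$ with ramification $\mu_j$ and monodromy recorded by $(\mu_j,k_j)$. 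Consequently the fixed locus is isomorphic, up to the automorphisms counted by $|\Aut(\vmu)|$, to $\Mbar_{g,\ell+n}(\BG)$, which parametrizes the contracted vertex component together with its $\ell+n$ special points.

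First I would factor the localization contribution into vertex, node, and edge pieces. Over the contracted component $\cC_0\to\fp_0=\BG$, the restriction of the obstruction data $\hF^*(\cL_2(-D_0)\oplus\cL_3)$ together with the three coordinate directions at $\fp_0$ identifies the vertex factor with $\be_{g,\ell+n}=\phi_f\big(e_{\tT}(R\pi_*F^*(\bigoplus_{i=1}^3 L_{\chi_i}))\big)$ from \eqref{eqn:egn}; this produces the denominator $\be_{g,\ell+n}$ and the insertions $\prod_i\ev_i^*\gamma_i$. Evaluation at the node $z_j$ on the vertex side inserts the class $\be_{h(\mu_j,k_j)}\one_{h(\mu_j,k_j)^{-1}}$, consistent with the node monodromy $\eta_1^{\mu_j}\eta_2^{-k_j}$ computed above. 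Smoothing the node $z_j$ contributes the factor $1/(\sv/\mu_j-\bar\psi_{\ell+j})$ to $e_{\tT}(N^{\vir})$, which I expand as a geometric series in $\bar\psi_{\ell+j}$ to produce the descendant denominators $1-\frac{\mu_j}{\sv}\bar\psi_{\ell+j}$.

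Next I would compute the leg contribution. Each $\cC_j$ is a cover of $\fl$ of degree governed by $\mu_j$, and the factors of $e_{\tT}(N^{\vir})$ and of $e_{\tT}(-R\pi_*(\hF^*\cL_2(-D_0)\oplus\hF^*\cL_3))$ supported on $\cC_j$ assemble, after applying $\phi_f$, into the scalar $D'(\mu_j,k_j)$ of \eqref{eqn:disk-function}. Concretely, I would read off the $\tT$-weights on $H^0$ and $H^1$ of the pullbacks of $\cL_2$ and $\cL_3$ and on the tangent--obstruction complex along the leg; the product of these weights telescopes into the displayed ratio of Gamma functions, with the age-dependent power of $\sv$ recording the equivariant degree via \eqref{eqn:v} and the sign $(-1)^{\lfloor d_0 w_3+k/m\rfloor}$ arising from the orientation of the obstruction spaces. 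This is essentially the disk-function computation of \cite[Section 3.3]{Ro14}, and tracking the signs and normalizations through the fractional $\tT$-weights is the step I expect to be the \emph{main obstacle}.

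Finally I would assemble the pieces. Summing the vertex--node--edge contributions over all fixed loci, the $n$ legs decouple and factor out as $\prod_{j=1}^n D'(\mu_j,k_j)$ times a single Hurwitz--Hodge integral over $\Mbar_{g,\ell+n}(\BG)$, giving the stable formula. The three low-degree identities require separate treatment because the relevant vertex moduli $\Mbar_{0,1}(\BG)$ or $\Mbar_{0,2}(\BG)$ fails to be stable: for one relative point and no marked points the whole curve is a single leg, so the invariant reduces directly to $D'(d_0,k)$ together with the normalizing factors $\delta_{1,h(d_0,k)}/|G|$ and $(\sv/d_0)^2$; for one relative point and one marked point, and for two relative points joined along a single node with no contracted vertex, I would likewise extract the product of disk functions times the $\tT$-equivariant pairing. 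Direct localization on these low-dimensional moduli spaces verifies the three formulas.
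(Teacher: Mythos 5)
Your proposal is correct and follows essentially the same route as the paper: the paper likewise deduces the proposition directly by virtual localization, after reducing to $\Mbar^0_{g,\ell}(\fl/\fp_\infty,\vmu)$ (contributions from expanded targets vanishing by \cite{RZ13}), identifying the $\tT$-fixed loci via the vertex--leg decomposition $\cC=\cC_0\cup\bigcup_j\cC_j$ with $\cC_0$ contracted to $\fp_0$, and absorbing the leg weights into the disk function $D'(\mu_j,k_j)$ of \cite[Section 3.3]{Ro14}, with the unstable $(g,n)$ cases handled separately. The only small slip is your attribution of the factor $1/|\Aut(\vmu)|$ to automorphisms of the fixed locus: since the relative points are ordered in $\Mbar_{g,\ell}(\fl/\fp_\infty,\vmu)$, this factor is built into the definition of $\langle\cdots\rangle^{\cX,(\cL,f)}_{g,\vmu}$ rather than arising from the localization, but this does not affect the argument.
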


Introduce variables $X_1,\ldots, X_n$ and
let
$$
\btau =\sum_{a=1}^p \tau_a \one_{h_a},
$$
where $h_a\in G$ corresponds to $(m_a,n_a,1)\in \{ v\in \Bsi: \age(v)=1\}$. For $k\in \{0,1, \ldots, m-1\}$, let
$\one'_{\frac{-k}{m}} :=  \one'_{1-\delta_{0,k}-\frac{k}{m}}\in H^*_{\CR}(\cB\bmu_m;\bC)=\bigoplus_{k=0}^{m-1} \bC\one'_\frac{k}{m}$. Define
\begin{eqnarray*}
F_{g,n}^{\cX,(\cL,f)}(\btau; X_1,\ldots, X_n)
&=&\sum_{\mu_1,\ldots,\mu_n>0}\sum_{k_1,\ldots,k_n=0}^{m-1}\sum_{\ell\geq 0}
\frac{\langle \btau^\ell \rangle^{\cX,(\cL,f)}_{g,(\mu_1,k_1),\ldots, (\mu_n,k_n)}}{\ell !}\\
&&\cdot
\prod_{j=1}^n (X_j)^{\mu_j}\cdot
(-1)^\frac{-k_1}{m}\one'_\frac{-k_1}{m}\otimes \cdots \otimes (-1)^\frac{-k_n}{m}\one'_\frac{-k_n}{m},
\end{eqnarray*}
which is an $H^*_{\CR}(\cB\bmu_m;\bC)^{\otimes n}$-valued function, where
$$
H^*_{\CR}(\cB\bmu_m;\bC)=\bigoplus_{k=0}^{m-1} \bC\one'_\frac{k}{m}.
$$

\begin{remark}
For each $k\in\{0,1,\ldots, m-1\}$, $\{ \mu_j: k_j=k\}$ determines a partition
$$
\mu^k = (\mu^k_1\ge \cdots \ge \mu^k_{\ell(\mu^k)} >0).
$$
We have
\begin{eqnarray*}
&& \Aut((\mu_1,k_1),\ldots, (\mu_n,k_n)) = \prod_{k=0}^{m-1}\Aut(\mu^k),\\
&& \ell(\mu^0)+\cdots + \ell(\mu^{m-1})=n.
\end{eqnarray*}
The correlator $\langle \btau^\ell\rangle^{\cX,(\cL,f)}_{g,(\mu_1,k_1),\ldots, (\mu_n,k_n)}$
is invariant under permutation of $(\mu_1,k_1),\ldots (\mu_n,k_n)$, so it only depends
on the $m$-tuple of partitions $\mu^0,\ldots,\mu^{m-1}$. So
$F_{g,n}^{\cX,(\cL,f)}(\btau; X_1,\ldots, X_n)$ is a symmetric function in
$X_1, \ldots, X_n$, and can be rewritten as
\begin{eqnarray*}
F_{g,n}^{\cX,(\cL,f)}(\btau; X_1,\ldots, X_n)
&=&\sum_{\substack{\mu^0,\ldots,\mu^{m-1}\in \cP\\ \ell(\mu^0)+\cdots + \ell(\mu^{m-1})=n } }
\sum_{\ell\geq 0}
\frac{\langle \btau^\ell \rangle^{\cX,(\cL,f)}_{g,\mu^0,\ldots,\mu^{m-1}}}{\ell !}
X_{\mu^0,\ldots,\mu^{m-1}},
\end{eqnarray*}
where $\cP$ is the set of partitions, and
$$
X_{\mu^0,\ldots,\mu^{m-1}} =\sum_{\si\in S_n} \prod_{k=0}^{m-1}
\prod_{j=\ell(\mu^0)+\cdots+\ell(\mu^{k-1})+1}^{\ell(\mu^0)+\cdots+ \ell(\mu^k)}
X^{\mu_j^k}_{\si(j)}((-1)^\frac{-k}{m}\one'_{\frac{-k}{m}} )^{\otimes \ell(\mu^k)}.
$$
\end{remark}

We introduce some notation.
\begin{enumerate}
\item Given $h\in G$, define
\begin{eqnarray*}
\Phi^h_0(X)&:=&\frac{1}{|G|}
\sum_{\substack{(d_0,k)\in \bZ_{\geq 0}\times \bZ_m \\ h(d_0,k)=h} } D'(d_0,k) X^{d_0} (-1)^{-k/m}\one'_\frac{-k}{m} \\
&=& -\sum_{\substack{(d_0,k)\in \bZ_{\geq 0}\times \bZ_m \\ h(d_0,k)=h} }
\frac{1}{m\sv}e^{\sqrt{-1}\pi(d_0 w_3-c_3(h))}
\big(\frac{\sv}{d_0}\big)^{\age(h)-1} \\
&& \cdot \frac{\Gamma(d_0 (w_1+w_2)+c_3(h)) }{
\Gamma(d_0 w_1-c_1(h)+1)\Gamma(d_0 w_2 -c_2(h)+1)}X^{d_0} \one'_\frac{-k}{m}.
\end{eqnarray*}
Then $\Phi^h_0(X)$ takes values in $\bigoplus_{k=0}^{m-1}\bC \sv^{\age(v)-2} \one'_\frac{k}{m}$.

For $a\in \bZ$ and $h\in G$,  we define
$$
\Phi^h_a(X) := \frac{1}{|G|}\sum_{\substack{d_0> 0 \\ h(d_0,k)=h} } D'(d_0,k)(\frac{d_0}{\sv})^a X^{d_0}(-1)^{-k/m}
\one'_\frac{-k}{m}.
$$
Then $\Phi^h_a(X)$ takes values in $\bigoplus_{k=0}^{m-1}\bC \sv^{\age(v)-2-a} \one'_\frac{k}{m}$, and
$$
\Phi^h_{a+1}(X)=(\frac{1}{\sv}X\frac{d}{dX})\Phi^h_a(X).
$$

\item For $a\in \bZ$ and $\gamma\in G^*$, we define
$$
\txi^\gamma_a(X):= |G|\sum_{h\in G}\chi_\gamma(h^{-1})
\bigl(\prod_{i=1}^3 (w_i\sv)^{1-c_i(h)}\bigr) \Phi^h_a(X).
$$
Then $\txi^\gamma_a(X)$ takes values in $\bigoplus_{k=0}^{m-1}\bC \sv^{1-a} \one'_\frac{k}{m}$.
\item Given $\gamma_1,\gamma_2\in H^*_{\CR}(\cX;\bC)\otimes_{\bC}\bC(\sv)$
and $a_1,a_2\in \bZ_{\geq 0}$, define
\begin{eqnarray*}
\langle \tau_{a_1}(\gamma_1)\rangle^{\cX, \bT_f}_{0,1} &=& \delta_{a_1,0}\langle 1, \gamma_1\rangle_{\cX}\cdot \sv^2 \\
\langle\tau_{a_1}(\gamma_1)\tau_{a_2}(\gamma_2)\rangle^{\cX, \bT_f}_{0,2} &=&  \delta_{a_1,0}\delta_{a_2,0}\cdot \sv
\langle \gamma_1,\gamma_2\rangle_{\cX}.
\end{eqnarray*}

\item Given $\gamma_1,\ldots, \gamma_n\in H^*_{\CR}(\cX;\bC)\otimes\bC(\sv)$, and $a_1,\ldots, a_n\in \bZ_{\geq 0}$,
define
$$
\langle \tau_{a_1}(\gamma_1)\cdots \tau_{a_n}(\gamma_n)\rangle^{\cX,\bT_f}_{g,n}
=\int_{\Mbar_{g,n}(\BG)} \frac{1}{\be_{g,n}} \prod_{i=1}^n (\ev_i^*(\gamma_i)\bar{\psi}_i^{a_i}).
$$

\item Given $h\in G$, define $\one_h^*=|G|\phi_f(\be_h\one_{h^{-1}})$.
\item In the rest of this paper, we consider $\bT_f$-equivariant cohomology,
and write $\phi_\gamma$ and $\bar{\phi}_\gamma$ instead of $\phi_f(\phi_\gamma)$ and $\phi_f(\bar{\phi}_\gamma)$,
respectively. (Recall that $\phi_f(\sw_i)=w_i\sv$.)
\end{enumerate}

With the above notation, we have:
\begin{proposition}\label{thm:open-descendant}
\begin{enumerate}
\item {\em (disk invariants)}
\begin{eqnarray*}
&& F_{0,1}^{\cX,(\cL,f)}(\btau;X)\\
&=&\Phi^1_{-2}(X) +\sum_{a=1}^p \tau_a \Phi^{h_a}_{-1}(X)+ \sum_{a\in \bZ_{\geq 0}}
\sum_{h\in G}
\sum_{\ell\geq 2}\frac{\langle \btau^\ell, \tau_a(\one_h^*)\rangle^{\cX,\bT_f}_{0,\ell+1} }{\ell!}
\Phi^h_a(X) \\
&=& \frac{1}{|G|^2 w_1 w_2 w_3}\Big( \sum_{\gamma\in G^*}\txi^{\gamma}_{-2}(X)
+ \sum_{a=1}^p \tau_a \prod_{i=1}^3 w_i^{c_i(h_a)}\sum_{\gamma\in G^*}
\chi_\gamma(h_a)\txi^\gamma_{-1}(X)\Big)\Big|_{\sv=1}\\
&&  +\sum_{a\in \bZ_{\geq 0}}\sum_{\gamma\in G^*}
\sum_{\ell\geq 2}\frac{\langle \btau^\ell, \tau_a(\bar{\phi}_\gamma)\rangle^{\cX,\bT_f}_{0,\ell+1} }{\ell!}\txi^\gamma_a(X).
\end{eqnarray*}
\item{\em (annulus invariants)}
\begin{eqnarray*}
&& F_{0,2}^{\cX,(\cL,f)}(\btau; X_1,X_2)-F^{\cX,(\cL,f)}_{0,2}(0;X_1,X_2)\\
&=& \sum_{a_1,a_2\in \bZ_{\geq 0}}
\sum_{h_1,h_2\in G}
\sum_{\ell\geq 1}\frac{\langle \btau^\ell, \tau_{a_1}(\one_{h_1}^*)\tau_{a_2}(\one_{h_2}^*)\rangle^{\cX,\bT_f}_{0,\ell+2} }{\ell!}
\Phi^{h_1}_{a_1}(X_1)\Phi^{h_2}_{a_2}(X_2) \\
&=& \sum_{a_1, a_2\in \bZ_{\geq 0}}
\sum_{\gamma_1, \gamma_2\in G^*}
\sum_{\ell\geq 1}\frac{\langle \btau^\ell, \tau_{a_1}(\bar{\phi}_{\gamma_1})
\tau_{a_2}(\bar{\phi}_{\gamma_2})\rangle^{\cX,\bT_f}_{0,\ell+2} }{\ell!}
\txi^{\gamma_1}_{a_1}(X_1)\txi^{\gamma_2}_{a_2}(X_2),
\end{eqnarray*}
where
\begin{equation}\label{eqn:annulus-zero}
\begin{aligned}
& (X_1\frac{\partial}{\partial X_1}+ X_2\frac{\partial}{\partial X_2}) F_{0,2}^{\cX,(\cL,f)}(0;X_1,X_2)\\
=&|G| \Big(\sum_{h\in G} \be_h \Phi^h_0(X_1) \Phi^{h^{-1}}_0(X_2)\Big)\Big|_{\sv=1}
= \frac{1}{|G|^2 w_1w_2w_3} \Big(\sum_{\gamma\in G^*}\big(\txi^\gamma_0(X_1)\txi^\gamma_0(X_2)\Big)\Big|_{\sv=1}.
\end{aligned}
\end{equation}

\item For $2g-2+n>0$,
\begin{eqnarray*}
&& F_{g,n}^{\cX,(\cL,f)}(\btau; X_1,\ldots, X_n)\\
&=&\sum_{a_1,\ldots, a_n\in \bZ_{\geq 0}}
\sum_{h_1,\ldots, h_n\in G}
\sum_{\ell\geq 0}\frac{\langle \btau^\ell, \tau_{a_1}(\one_{h_1}^*)\ldots\tau_{a_n}(\one_{h_n}^*)\rangle^{\cX,\bT_f}_{g,\ell+n} }{\ell!}
\prod_{j=1}^n \Phi^{h_j}_{a_j}(X_j) \\
&=&\sum_{a_1,\ldots, a_n\in \bZ_{\geq 0}}
\sum_{\gamma_1,\ldots, \gamma_n\in G^*}
\sum_{\ell\geq 0}\frac{\langle \btau^\ell, \tau_{a_1}(\bar{\phi}_{\gamma_1})\ldots
\tau_{a_n}(\bar{\phi}_{\gamma_n})\rangle^{\cX,\bT_f}_{g,\ell+n} }{\ell!}
\prod_{j=1}^n \txi^{\gamma_j}_{a_j}(X_j).
\end{eqnarray*}
\end{enumerate}
\end{proposition}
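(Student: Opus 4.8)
The plan is to derive all three identities from the localization formula for $\langle\gamma_1,\dots,\gamma_\ell\rangle_{g,\vmu}^{\cX,(\cL,f)}$ in the preceding Proposition by substituting it into the definition of $F_{g,n}^{\cX,(\cL,f)}$ with every $\gamma_i=\btau$ and then repackaging the sum. I would first dispose of the stable range and handle the low-dimensional (unstable) moduli afterwards, since the three parts of the statement differ only in how those unstable contributions enter.

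For part (3), where $2g-2+n>0$ (so $2g-2+\ell+n>0$ for all $\ell\ge 0$), I substitute the stable localization formula and expand the descendant denominator $\tfrac{1}{1-\mu_j\bar\psi_{\ell+j}/\sv}=\sum_{a_j\ge 0}(\mu_j/\sv)^{a_j}\bar\psi_{\ell+j}^{a_j}$ as a geometric series. Each monomial $\bar\psi_{\ell+j}^{a_j}$ converts the Hurwitz--Hodge integral over $\Mbar_{g,\ell+n}(\BG)$ into the descendant invariant $\langle\btau^\ell,\tau_{a_1}(\one_{h_1}^*)\cdots\tau_{a_n}(\one_{h_n}^*)\rangle^{\cX,\bT_f}_{g,\ell+n}$ via $\one_h^*=|G|\phi_f(\be_h\one_{h^{-1}})$, while the remaining data $D'(\mu_j,k_j)$, $(\mu_j/\sv)^{a_j}$, $X_j^{\mu_j}$, and $(-(-1)^{-k_j/m})\one'_{-k_j/m}$ are precisely the summands of $\Phi^{h_j}_{a_j}(X_j)$. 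Summing over each $(\mu_j,k_j)$ with $h(\mu_j,k_j)=h_j$ fixed and then over $h_j\in G$ assembles $\prod_j\Phi^{h_j}_{a_j}(X_j)$, yielding the first equality. The one genuinely delicate bookkeeping step here is matching the factor $\tfrac{1}{|\Aut(\vmu)|}$ in the relative invariant against the attachment of the distinguishable variables $X_1,\dots,X_n$ to the ordered relative points, which is what removes the automorphism factor from the final expression.

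For the second equality in each part I pass from the basis $\{\one_h^*:h\in G\}$ to $\{\bar\phi_\gamma:\gamma\in G^*\}$ in each descendant slot, a change of basis dual to the one relating $\{\Phi^h_a\}$ and $\{\txi^\gamma_a\}$. Using $\be_h\one_{h^{-1}}=\prod_i\sw_i^{1-c_i(h)}\bar\one_{h^{-1}}$ and $\bar\one_h=\sum_\gamma\chi_\gamma(h)\bar\phi_\gamma$ gives $\one_h^*=|G|\prod_i(w_i\sv)^{1-c_i(h)}\sum_\gamma\chi_\gamma(h^{-1})\bar\phi_\gamma$, while inverting the definition of $\txi^\gamma_a$ gives $\Phi^h_a(X)=\tfrac{1}{|G|^2\prod_i(w_i\sv)^{1-c_i(h)}}\sum_\gamma\chi_\gamma(h)\txi^\gamma_a(X)$. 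Substituting both into $\sum_{h}\langle\cdots\tau_a(\one_h^*)\cdots\rangle\Phi^h_a(X)$, the powers of $w_i\sv$ cancel and the orthogonality relation $\sum_h\chi_\gamma(h^{-1})\chi_{\gamma'}(h)=|G|\delta_{\gamma,\gamma'}$ collapses the double sum to $\sum_\gamma\langle\cdots\tau_a(\bar\phi_\gamma)\cdots\rangle\txi^\gamma_a(X)$; applying this slot by slot produces the stated $\bar\phi_\gamma$-form.

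The remaining work, which I expect to be the main obstacle, is assembling the contributions from unstable moduli, which fall outside the Hurwitz--Hodge graph-sum framework. In the disk case $\Mbar_{0,\ell+1}(\BG)$ is stable only for $\ell\ge 2$, so the $\ell=0$ and $\ell=1$ terms must be supplied by the explicit values $\langle\,\rangle_{0,(d_0,k)}$ and $\langle\gamma\rangle_{0,(d_0,k)}$ of the Proposition; their prefactors $(\sv/d_0)^2$ and $\sv/d_0$ are exactly $(\mu/\sv)^{-2}$ and $(\mu/\sv)^{-1}$, producing the terms $\Phi^1_{-2}(X)$ and $\sum_{a}\tau_a\Phi^{h_a}_{-1}(X)$ with negative descendant index (the restriction to $h=1$, resp. $h=h_a$, coming from the pairings $\langle\gamma,\be_h\one_{h^{-1}}\rangle_{\cX}$). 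In the annulus case the $\btau$-independent part $F^{\cX,(\cL,f)}_{0,2}(0;X_1,X_2)$ is the $\ell=0$ term, which is determined only up to a constant; I would verify it in the differentiated form \eqref{eqn:annulus-zero}, using $\langle\be_{h_1}\one_{h_1^{-1}},\be_{h_2}\one_{h_2^{-1}}\rangle_{\cX}=\tfrac{1}{|G|}\be_{h_1}\delta_{h_1,h_2^{-1}}$ and the $\tfrac{\sv}{\mu_1+\mu_2}$ factor of the two-point value (the operator $X_1\partial_{X_1}+X_2\partial_{X_2}$ supplying the missing $\mu_1+\mu_2$), and handle the $\ell\ge 1$ part by the stable argument. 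Throughout, the hard part is reconciling the automorphism normalizations of the relative invariants with the labeled open moduli carrying $X_1,\dots,X_n$ and correctly matching these unstable boundary terms.
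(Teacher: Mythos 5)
Your proposal is correct and follows exactly the route the paper intends: the paper states Proposition \ref{thm:open-descendant} without a separate proof, treating it as direct bookkeeping from the preceding localization proposition, and your derivation supplies precisely that bookkeeping --- geometric-series expansion of $(1-\mu_j\bar\psi_{\ell+j}/\sv)^{-1}$ to produce descendants, assembly of $D'(\mu_j,k_j)(\mu_j/\sv)^{a_j}X_j^{\mu_j}$ into $\Phi^{h_j}_{a_j}(X_j)$ with $\one_h^*=|G|\prod_i(w_i\sv)^{1-c_i(h)}\sum_\gamma\chi_\gamma(h^{-1})\bar\phi_\gamma$ and character orthogonality giving the $\txi^\gamma_a$ form, and the unstable $(0,1)$, $\ell=0,1$ and $(0,2)$, $\ell=0$ contributions matched against the explicit values (including the $(\sv/d_0)^{\pm}$ factors as negative descendant indices and the Euler operator absorbing $\sv/(\mu_1+\mu_2)$ in \eqref{eqn:annulus-zero}). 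Your identifications of the basis-change formulas and of the $\Aut(\vmu)$ normalization against the ordered relative points agree with the paper's conventions, so nothing further is needed.
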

\begin{remark}
$F_{0,2}^{\cX,(\cL,f)}(0;X_1,X_2)$ is an $H^*(\cB\bmu_m;\bC)^{\otimes 2}$-valued
power series in $X_1,X_2$ which vanishes at $(X_1,X_2)=(0,0)$,
so it is determined by \eqref{eqn:annulus-zero}.
\end{remark}

\subsection{A-model graph sum}\label{sec:Agraph}
Introduce formal variables
\begin{eqnarray*}
\hbu &=& \sum_{a\geq 0} \hu_a z^a =\sum_{a\geq 0}\sum_{\beta\in G^*}  u_a^\beta \bar{\phi}_\beta z^a
=\sum_{\beta\in G^*} \bu^\beta(z) \bar{\phi}_\beta,
\end{eqnarray*}
where
$$
u_a^\beta\in \cQ,\quad \hu_a =\sum_{\beta\in G^*}u_a^\beta \bar{\phi}_\beta \in H^*_\CR(\cX;\cQ),\quad
\bu^\beta(z)=\sum_{a\geq 0}u^\beta_a z^a.
$$
For $j=1,\dots,n$, define $\hbu_j$ and $\bu_j^\beta(z)$ similarly. Define
$$
\langle \hbu^\ell, \hbu_1,\ldots,\hbu_n\rangle^{\cX, \bT_f}_{g,\ell+n}
= \sum_{a_1,\ldots,a_\ell,b_1,\ldots,b_n\in \bZ_{\geq 0}}
\langle \tau_{a_1}(\hu_{a_1})\cdots \tau_{a_\ell}(\hu_{a_\ell}) \tau_{b_1}((\hu_1)_{b_1})\cdots
\tau_{b_n}((\hu_n)_{b_n})\rangle_{g,\ell+n}^{\cX,\bT_f}.
$$

By \cite[Theorem 4.2]{FLZ1}, $\displaystyle{ \frac{1}{\ell!}\langle \hbu^\ell,\hbu_1,\ldots,\hbu_n\rangle^{\cX,\bT_f}_{g,\ell+n} }$
can be written as a graph sum.  By Proposition \ref{thm:open-descendant},
\begin{equation}\label{eqn:Fgn}
F^{\cX,(\cL,f)}_{g,n}(\btau;X_1,\ldots,X_n)
=\sum_{\ell\geq 0}\frac{1}{\ell!}\langle \btau^\ell,\txi(X_1),\ldots,\txi(X_n)\rangle^{\cX,\bT_f}_{g,\ell+n},
\end{equation}
where
$$
\txi(X)=\sum_{a\in\bZ_{\geq 0}}\sum_{\beta\in G^*} \txi^\beta_a(X) \bar{\phi}_\beta z^a.
$$
Therefore, a graph sum of $F^{\cX,(\cL,f)}_{g,n}(\btau;X_1,\ldots, X_n)$ can be obtained
by the following specialization of \cite[Theorem 2]{FLZ1}:
$$
\hbu=\btau =\sum_{a=1}^p \tau_a \one_{h_a},\quad \hbu_j = \txi(X_j),\quad \sw_i = w_i \sv.
$$
By \eqref{eqn:v}, we may let $\sv=1$, i.e., $\sw_i = w_i$.
In the rest of this subsection, we give the precise statement of
this graph sum.

Given a connected graph $\Ga$, we introduce the following notation.
\begin{enumerate}
\item $V(\Ga)$ is the set of vertices in $\Ga$.
\item $E(\Ga)$ is the set of edges in $\Ga$.
\item $H(\Ga)$ is the set of half edges in $\Gamma$.
\item $L^O(\Ga)$ is the set of open leaves in $\Ga$.
The open leaves are ordered: $L^O(\Ga)=\{ l_1,\ldots, l_n\}$ where
$n$ is the number of open leaves. (Open leaves correspond
to {\em ordered} ordinary leaves in \cite{FLZ1}.)
\item $L^o(\Ga)$ is the set of primary leaves in $\Ga$.
The primary leaves are unordered. (Primary leaves correspond
to {\em unordered} ordinary leaves in \cite{FLZ1}.)
\item $L^1(\Ga)$ is the set of dilaton leaves in $\Ga$. The dilaton leaves
are not ordered.
\end{enumerate}

With the above notation, we introduce the following labels:
\begin{enumerate}
\item (genus) $g: V(\Ga)\to \bZ_{\geq 0}$.
\item (marking) $\alpha: V(\Ga) \to G^*$. This induces
$$
\alpha:L(\Ga)=L^O(\Ga)\cup L^o(\Ga)\cup L^1(\Ga)\to G^*,
$$
as follows: if $l\in L(\Ga)$ is a leaf attached to a vertex $v\in V(\Ga)$,
define $\alpha(l)=\alpha(v)$.
\item (height) $k: H(\Ga)\to \bZ_{\geq 0}$.
\end{enumerate}

Given a vertex $v\in V(\Ga)$, let $H(v)$ denote the set of half edges
emanating from $v$. The the valency $\val(v)$ of the vertex $v$
is equal to the size $|H(v)|$ of the set $H(v)$.
A labeled graph $\vGa=(\Ga,g,\alpha,k)$ is {\em stable} if
$$
2g(v)-2 + \val(v) >0
$$
for all $v\in V(\Ga)$.

Let $\bGa(\BG)$ denote the set of all stable labeled graphs
$\vGa=(\Gamma,g,\alpha,k)$. The genus of a stable labeled graph
$\vGa$ is defined to be
$$
g(\vGa):= \sum_{v\in V(\Ga)}g(v)  + |E(\Ga)|- |V(\Ga)|  +1
=\sum_{v\in V(\Ga)} (g(v)-1) + (\sum_{e\in E(\Gamma)} 1) +1.
$$
Define
$$
\bGa_{g,\ell,n}(\BG)=\{ \vGa=(\Gamma,g,\alpha,k)\in \bGa(\BG): g(\vGa)=g, |L^o(\Ga)|=\ell, |L^O(\Ga)|=n\}.
$$

Let
$$
R(z)=  \prod_{i=1}^3 \exp\Bigl(\sum_{m\geq 1}\frac{(-1)^{m}}{m(m+1)}  A^i_{m+1} (\frac{z}{w_i})^m \Bigr),
$$
where $A^i_{m}$ is the operator defined by
$$
A^i_m: H^*(\IBG;\bC)\to H^*(\IBG;\bC),\quad
\one_h \mapsto  B_m(c_i(h)) \one_h,
$$
and $B_m(x)$ is the Bernoulli polynomial. Then, relative to the basis $\{\phi_\gamma:\gamma\in G^*\}$,
\begin{equation}\label{eqn:Rcan}
R(z)^\alpha_\beta =\frac{1}{|G|}\sum_{h\in G} \chi_\alpha(h)
\chi_\beta(h^{-1})\prod_{i=1}^3 \exp\Bigl(\sum_{m=1}^\infty \frac{(-1)^m}{m(m+1)} B_{m+1}(c_i(h))(\frac{z}{w_i})^m \Bigr).
\end{equation}
Given $\beta\in G^*$, define
$$
\txi^\beta(z,X) = \sum_{a\geq 0} z^a \txi^\beta_a(X)\big|_{\sv=1}.
$$

We assign weights to leaves, edges, and vertices of a labeled graph $\vGa\in \bGa(\cY)$ as follows.
\begin{enumerate}
\item {\em Open leaves.} To each open leaf $l_j \in L^O(\Ga)$ with  $\alpha(l_j)= \alpha\in G^*$
and  $k(l_j)= k\in \bZ_{\geq 0}$, we assign:
$$
(\cL^{\txi})^\alpha_k(l_j) = [z^k] (\frac{1}{|G|\sqrt{w_1w_2w_3}}
\sum_{\beta\in G^*}R(-z)^\beta_\alpha \txi^\beta(z,X_j)).
$$
\item {\em Primary leaves.} To each primary leaf $l\in L^o(\Ga)$ with
$\alpha(l)= \alpha\in G^*$ and  $k(l)= k\in \bZ_{\geq 0}$, we assign:
\begin{equation}\label{eqn:primaryA}
(\cL^{\btau})^\alpha_k(l) =[z^k] (\frac{1}{|G|\sqrt{w_1w_2w_3}}\sum_{\beta\in G^*}\sum_{a=1}^p
\prod_{i=1}^{3}w_i^{c_i(h_a)}R(-z)^\beta_\alpha\chi_\beta(h_a)\tau_a ).
\end{equation}
By the orthogonality of the characters, we have
\begin{eqnarray*}
&& \sum_{\beta\in G^*} \sum_{a=1}^p\prod_{i=1}^{3}w_i^{c_i(h_a)} R(-z)^\beta_\alpha\chi_\beta(h_a)\tau_a \\
&=&   \sum_{a=1}^p\prod_{i=1}^{3}w_i^{c_i(h_a)} \chi_\alpha(h_a)\tau_a
\exp \Big( \sum_{m=1}^\infty\frac{(-1)^{m+1}}{m(m+1)} B_{m+1}  \left( c_i(h_a) \right) (\frac{z}{w_i})^m\Bigr).
\end{eqnarray*}
Here we used the fact that $B_m(1-x)=(-1)^mB_m(x)$, $B_m=0$ when $m$ is odd and $c_i(h)+c_i(h^{-1})=1-\delta_{c_i(h),0}$.

\item {\em Dilaton leaves.} To each dilaton leaf $l \in L^1(\Ga)$ with $\alpha(l)=\alpha \in G^*$
and $2\leq k(l)=k \in \bZ_{\geq 0}$, we assign
$$
(\cL^1)^\alpha_k(l) = [z^{k-1}](-\frac{1}{|G|\sqrt{w_1w_2w_3}} \sum_{\beta\in G^*} R(-z)^\beta_\alpha).
$$
\item {\em Edges.} To an edge connecting a vertex marked by $\alpha\in G^*$ and
a vertex marked by $\beta\in G^*$,
and with heights $k$ and $l$ at the corresponding half-edges, we assign
$$
\cE^{\alpha,\beta}_{k,l}(e) = [z^k w^l]
\Bigl(\frac{1}{z+w} (\delta_{\alpha,\beta}-\sum_{\gamma\in G^*} R(-z)^\gamma_\alpha R(-w)^\gamma_\beta)\Bigr).
$$
\item {\em Vertices.} To a vertex $v$ with genus $g(v)=g\in \bZ_{\geq 0}$ and with
marking $\alpha(v)=\gamma\in G^*$, with $n$ primary or open
leaves and half-edges attached to it with heights $k_1, ..., k_n \in \bZ_{\geq 0}$ and $m$
dilaton leaves with heights $k_{n+1}, \ldots, k_{n+m}\in \bZ_{\geq 0}$, we assign
$$
(|G|\sqrt{w_1w_2w_3})^{2g-2+\val(v)} \int_{\Mbar_{g,n+m}}\psi_1^{k_1} \cdots \psi_{n+m}^{k_{n+m}}.
$$
\end{enumerate}

Given a labeled graph $\vGa\in \bGa_{g,\ell,n}(\BG)$ with
$L^O(\Ga)=\{l_1,\ldots,l_n\}$, we define its A-model weight to be
\begin{equation}\label{eqn:wA}
\begin{aligned}
w_A(\vGa) =& \prod_{v\in V(\Ga)} (|G|\sqrt{w_1w_2w_3})^{2g-2+\val(v)} \langle \prod_{h\in H(v)} \tau_{k(h)}\rangle_{g(v)}
 \prod_{e\in E(\Ga)} \cE^{\alpha(v_1(e)),\alpha(v_2(e))}_{k(h_1(e)),k(h_2(e))}(e)\\
& \cdot \prod_{l\in L^o(\Ga)}(\cL^{\btau})^{\alpha(l)}_{k(l)}(l)
\prod_{j=1}^n(\cL^{\txi})^{\alpha(l_j)}_{k(l_j)}(l_j)
\prod_{l\in L^1(\Ga)}(\cL^1)^{\alpha(l)}_{k(l)}(l).
\end{aligned}
\end{equation}
Setting  $\hbu=\btau$, $\hbu_j =\txi(X_j)$, and $\sw_i=w_i$ in
\cite[Theorem 4.2]{FLZ1}, we obtain:
\begin{equation}\label{eqn:gln}
\frac{1}{\ell!} \langle \btau^\ell, \txi(X_1),\ldots , \txi(X_n)\rangle^{\cX}_{g,\ell+n}
=\sum_{\vGa\in \bGa_{g,\ell,n}(\BG)} \frac{w_A(\vGa)}{|\Aut(\vGa)|}.
\end{equation}
Let
$$
\bGa_{g,n}(\cX):= \bigcup_{\ell\geq 0} \bGa_{g,\ell,n}(\BG).
$$
Equations \eqref{eqn:Fgn}, \eqref{eqn:gln} and Proposition \ref{thm:open-descendant}
imply the following A-model graph sum formulae.
\begin{theorem}[A-model graph sum] \label{thm:Asum}
\begin{enumerate}
\item {\em (disk invariants)}
$$
F_{0,1}^{\cX,(\cL,f)}(\btau;X) = \Phi^1_{-2}(X) + \sum_{a=1}^p \tau_a \Phi^{h_a}_{-1}(X)
 +\sum_{\vGa\in \bGa_{0,1}(\cX)}\frac{w_A(\vGa)}{|\Aut(\vGa)|},
$$
where
\begin{equation}\label{eqn:disk-zero}
\Phi^1_{-2}(X) + \sum_{a=1}^p \tau_a \Phi^{h_a}_{-1}(X)
= \frac{1}{|G|^2 w_1 w_2 w_3}\Big( \sum_{\gamma\in G^*}\txi^\gamma_{-2}(X)
+ \sum_{a=1}^p \tau_a \prod_{i=1}^3 w_i^{c_i(h_a)}\sum_{\gamma\in G^*}
\chi_\gamma(h_a)\txi^\gamma_{-1}(X)\Big)\Big|_{\sv=1}.
\end{equation}
\item {\em (annulus invariants)}
$$
F_{0,2}^{\cX,(\cL,f)}(\btau;X_1, X_2) = F_{0,2}^{\cX,(\cL,f)}(0;X_1,X_2)+
 \sum_{\vGa\in \bGa_{0,2}(\cX)}\frac{w_A(\vGa)}{|\Aut(\vGa)|},
$$
where $F_{0,2}^{\cX,(\cL,f)}(0;X_1,X_2)$ is determined by Equation \eqref{eqn:annulus-zero}.
\item For $2g-2+n>0$,
$$
F_{g,n}^{\cX,(\cL,f)}(\btau;X_1,\ldots, X_n)
=\sum_{\vGa\in \bGa_{g,n}(\cX)}\frac{w_A(\vGa)}{|\Aut(\vGa)|}.
$$
\end{enumerate}
\end{theorem}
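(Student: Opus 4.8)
The plan is to read off all three formulae from Proposition \ref{thm:open-descendant} combined with the descendant graph sum \eqref{eqn:gln}, which is the specialization of \cite[Theorem 2]{FLZ1} obtained by setting $\hbu=\btau$, $\hbu_j=\txi(X_j)$, and $\sw_i=w_i$. The leaf, edge, and vertex weights in \eqref{eqn:wA} are exactly the $R$-matrix--dressed pieces produced by that specialization, so no new geometric input is needed beyond the two identities \eqref{eqn:Fgn} and \eqref{eqn:gln}; the entire argument is a matter of reorganizing sums and isolating the unstable contributions.

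First I would dispose of the stable range $2g-2+n>0$, which is immediate. Part (3) of Proposition \ref{thm:open-descendant} gives
$$
F_{g,n}^{\cX,(\cL,f)}(\btau;X_1,\ldots,X_n)=\sum_{\ell\geq 0}\frac{1}{\ell!}\langle \btau^\ell,\txi(X_1),\ldots,\txi(X_n)\rangle^{\cX,\bT_f}_{g,\ell+n},
$$
which is precisely \eqref{eqn:Fgn} once one expands $\txi(X_j)=\sum_{a,\beta}\txi^\beta_a(X_j)\bar{\phi}_\beta z^a$. Substituting \eqref{eqn:gln} for each fixed $\ell$ and using $\bGa_{g,n}(\cX)=\bigcup_{\ell\geq 0}\bGa_{g,\ell,n}(\BG)$, the double sum over $(\ell,\vGa)$ collapses into a single sum over $\bGa_{g,n}(\cX)$, which is part (3).

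The cases $(g,n)=(0,1)$ and $(0,2)$ are unstable and require peeling off the genuinely unstable disk and annulus pieces before the graph sum applies. For the disk, part (1) of Proposition \ref{thm:open-descendant} already writes $F_{0,1}^{\cX,(\cL,f)}$ as the $\ell\in\{0,1\}$ contribution $\Phi^1_{-2}(X)+\sum_{a=1}^p \tau_a \Phi^{h_a}_{-1}(X)$ plus the $\ell\geq 2$ sum. Since $\Mbar_{0,\ell+1}(\BG)$ is stable exactly when $\ell\geq 2$, only this latter sum is produced by \eqref{eqn:gln}, and the same collapse as above identifies it with $\sum_{\vGa\in\bGa_{0,1}(\cX)}w_A(\vGa)/|\Aut(\vGa)|$; the unstable head is appended verbatim as \eqref{eqn:disk-zero}. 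The annulus is handled identically: $\Mbar_{0,\ell+2}(\BG)$ is stable iff $\ell\geq 1$, so the $\ell\geq 1$ part of part (2) of Proposition \ref{thm:open-descendant} becomes the graph sum over $\bGa_{0,2}(\cX)$, while the lone $\ell=0$ term is exactly $F_{0,2}^{\cX,(\cL,f)}(0;X_1,X_2)$, which is determined by \eqref{eqn:annulus-zero}.

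The one point I would verify with care—and the only genuine obstacle—is the bookkeeping at the unstable boundary: one must confirm that the stability threshold $2g(v)-2+\val(v)>0$ on the single vertex of $\Mbar_{0,\ell+n}(\BG)$ cuts off the sum at precisely the value of $\ell$ where \eqref{eqn:gln} begins to hold, so that the special unstable correlators $\langle\tau_{a_1}(\gamma_1)\rangle^{\cX,\bT_f}_{0,1}$ and $\langle\tau_{a_1}(\gamma_1)\tau_{a_2}(\gamma_2)\rangle^{\cX,\bT_f}_{0,2}$ are neither double-counted by a (nonexistent) unstable graph nor silently dropped. Once this matching of ranges is checked, every remaining step is a direct substitution of the weights \eqref{eqn:wA} into the established identities.
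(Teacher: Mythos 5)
Your proposal is correct and takes essentially the same route as the paper, whose entire proof is the one-line observation that Equations \eqref{eqn:Fgn} and \eqref{eqn:gln} (the specialization of \cite[Theorem 2]{FLZ1} with $\hbu=\btau$, $\hbu_j=\txi(X_j)$, $\sw_i=w_i$) together with Proposition \ref{thm:open-descendant} yield all three formulae, with the unstable disk and annulus heads separated exactly as you describe. The boundary bookkeeping you flag as the one point of care is indeed harmless: the low-$\ell$ strata of $\bGa_{0,1}(\cX)$ and $\bGa_{0,2}(\cX)$ contribute zero, since a genus-zero vertex of valence $n_v$ requires $\sum_i k_i = n_v-3$ while each dilaton leaf forces height $k\geq 2$, so nothing is double-counted or dropped.
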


\section{The mirror curve and its compactification} \label{sec:mirror-curve}
The equation of the framed mirror curve is given by
$$
X^r Y^{-s-rf} + Y^m + 1 +\sum_{a=1}^p q_a X^{m_a} Y^{n_a-m_af}=0,
$$
where
$$
m_a = r c_1(h_a),\quad n_a = -sc_1(h_a) + m c_2(h_a).
$$
Let $\triangle$ be the triangle on $\bR^2$ with vertices $(r,-s)$, $(0,m)$, and $(0,0)$.
The compactified mirror curve $\bar{\Si}_q$ is embedded in the toric surface $S_\triangle$
defined by the  polytope $\triangle$. In this section, we assume $q_1,\ldots, q_p\in \bC$ are sufficiently small, so that the compactified mirror curve $\bar{\Si}_q$ is
a smooth compact Riemann surface. Let $\fg$ be the genus of $\bar{\Si}_q$,
and let $\fn$ be the number of points in $\bar{\Si}_q\setminus \Si_q$.

\subsection{The Liouville 1-form}\label{sec:SW}
Let $\tSi_q:=\pi^{-1}(\Si_q) \subset \bC^2$
be the preimage of $\Si_q\subset (\bC^*)^2$ under the (universal)
covering map $\pi: \bC^2 \to (\bC^*)^2$ defined by $(x,y)\mapsto (X=e^{-x},Y=e^{-y})$.
Then $\pi_q:=\pi|_{\tSi_q}: \tSi_q\to \Si_q$ is a regular $\bZ^2$-cover.
The 1-form $\log Y\frac{dX}{X}$ is multi-valued on $(\bC^*)^2$, but $\pi^*(\log Y \frac{dX}{X}) = ydx$
is a well-defined holomorphic 1-form on
$\bC^2$; indeed, it is the Liouville 1-form of the cotangent bundle $T^*\bC=(\bC)^2$:
$$
d(y dx)=dy\wedge dx.
$$
We define
$$
\Phi_q:= \big(\log Y \frac{dX}{X}\big)\Big|_{\Si_q}
$$
which is a multi-valued 1-form on $\Si_q$. Then $\pi_q^*\Phi_q = ydx|_{\tSi_q}$ is a well-defined
holomorphic 1-form on $\tSi_q$.

Give $q\in \bC^p$ such that $\Sigma_q$ is smooth, We choose a base point $p_q\in \Si_q$ and a point
$\tp_q\in \pi_q^{-1}(p_q)$. We have a commutative diagram:
\begin{equation}\label{eqn:fundamental-group}
\begin{CD}
1 @>>> \pi_1(\tSi_q,\tp_q) @>{\pi_{q*}}>> \pi_1(\Si_q,p_q) @>{I_{q*}}>> \pi_1((\bC^*)^2,p_q) @>>> 1\\
& & @V{\talpha}VV @V{\alpha}VV @V{\bar{\alpha}}VV & \\
&  & H_1(\tSi_q;\bZ) @>{\pi_{q*}}>> H_1(\Si_q;\bZ) @>{I_{q*}}>> H_1((\bC^*)^2;\bZ) @>>> 0.
\end{CD}
\end{equation}
In the above diagram:
\begin{itemize}
\item $I_{q*}$ is induced by the inclusion map $I_q:\Si_q\hookrightarrow (\bC^*)^2$.
\item The first row is a short exact sequence of multiplicative groups.
\item The second row is exact at $H_1(\Si_q;\bZ)=\bZ^{2\fg+\fn-1}$ and $H_1((\bC^*)^2;\bZ)=\bZ^2$.
\item The maps $\talpha$ and $\alpha$ are surjective group homomorphisms given by abelianization,
and $\bar{\alpha}$ is a group isomorphism.
\item $\pi_1(\Si_q,p_q)$ is a free group generated by $2\fg+\fn-1$ elements.
\end{itemize}
We define
$$
K_1(\Si_q;\bZ):= \mathrm{Ker}\left(I_{q*}: H_1(\Si_q;\bZ)\to H_1((\bC^*)^2;\bZ)\right)
=\mathrm{Im}\left(\pi_{q*}: H_1(\tSi_q;\bZ)\to H_1(\Si_q;\bZ)\right).
$$
Then $K_1(\Si_q;\bZ)\cong\bZ^{2\fg+\fn-3}$. There is a well-defined map
$$
\pi_1(\tSi_q,\tp_q)\longrightarrow \bC,\quad \tA \mapsto \int_{\tA} \pi_q^*\Phi_q.
$$
Given any $A \in K_1(\Si_q;\bZ)$, there exists $\tA\in \pi_1(\tSi_q,\tp_q)$ such that
$$
A = \pi_{q*} \circ \talpha (\tA)= \alpha \circ \pi_{q*} (\tA).
$$
If $\tA_1,\tA_2 \in \pi_1(\tSi_q,\tp_q)$ and $\alpha\circ \pi_{q*}(\tA_1)=\alpha\circ \pi_{q*}(\tA_2)= A$ then
$\pi_{q*}(\tA_2) = \pi_{q*}(\tA_1)B_1 B_2 B_1^{-1}B_2^{-1}$ for some $B_1,B_2\in \pi_1(\Si_q,p_q)$.
If $I_*(B_1)=(m_1,n_1)$ and $I_*(B_2)=(m_2,n_2)$, where $m_1,n_1,m_2,n_2\in \bZ$, then
$$
\int_{\tA_2} \pi_q^* \Phi = \int_{\tA_1}\pi_q^*\Phi + (2\pi\sqrt{-1})^2(m_1n_2-m_2n_1).
$$
So we have a map
\begin{equation}\label{eqn:integral-Z}
K_1(\Si_q;\bZ)\longrightarrow \bC/(2\pi\sqrt{-1})^2\bZ,\quad  A\mapsto \int_\tA \pi_q^*\Phi_q + (2\pi\sqrt{-1})^2\bZ
\end{equation}
where $\tA$ is any element in $(\pi_{q*}\circ\talpha)^{-1}(A) \subset \pi_1(\tSi_q,\tp_q)$.
Let $\int_A\Phi$ denote the image of $A\in K_1(\Si_q;\bZ)$ under the above map. Then
$\int_A\Phi_q \in \bC/(2\pi\sqrt{-1})^2\bZ$ is independent of choice of $p_q$ and $\tp_q$.

Let $B_\ep$ denotes the open ball in $\bC^p$ with radius $\ep>0$ and center $0$.
There exists $\ep>0$ such that $\Si_q$ is smooth for all $q\in B_\ep$.
For $q\in B_\ep$ there are canonical isomorphisms
$$
H_1(\Si_q;\bZ)\cong H_1(\Si_0;\bZ),\quad K_1(\Si_q;\bZ)\cong K_1(\Si_0;\bZ).
$$
Give $A\in K_1(\Si_0;\bZ)\cong K_1(\Si_q;\bZ)$, $\int_A \Phi_q$ can be viewed as
an element in $\Gamma(B_\ep,\bC)/(2\pi\sqrt{-1})^2\bZ$, where $\Gamma(B_\ep,\bC)$
is the space of holomorphic functions on $B_\ep$
and $(2\pi\sqrt{-1})^2\bZ$ is identified with the set of constant functions from $B_\ep$ to $(2\pi\sqrt{-1})^2\bZ$.

The inclusion $J_q: \Si_q\hookrightarrow \bar{\Si}_q$ induces a surjective map
$$
J_{q*}: H_1(\Si_q; \bC)\cong \bC^{2\fg+\fn-1} \to H_1(\bar{\Si}_q;\bC) \cong \bC^{2\fg},
$$
which restricts to a surjective $\bC$-linear map
\begin{equation}
K_1(\Si_q;\bC):=K_1(\Si_q;\bZ)\otimes_{\bZ}\bC\cong \bC^{2\fg+\fn-3}\to
H_1(\bar{\Si}_q;\bC)\cong\bC^{2\fg}.
\end{equation}
The complex vector spaces $K_1(\Si_q;\bC)$, $H_1(\Si_q;\bC)$,
and $H_1(\bar{\Si}_q; \bC)$ form flat complex vector bundles
$\cK$, $\cH$, $\bar{\cH}$ over $B_\ep$ of rank $2\fg+\fn-3$,
$2\fg+\fn-1$, and $2\fg$, respectively.
Given $A\in K_1(\Si_q;\bC)$ which is a flat section of $\cK$ with respect to the Gauss-Manin connection,
$\int_A\Phi_q$ can be viewed as an element in $\Gamma(B_\ep,\bC)/\bC$, where $\bC$ is identified
with the space of constant functions from $B_\ep$ to $\bC$.

\subsection{Holomorphic 1-forms on $\Si_q$}
For any integers $m,n$,
$$
\varpi_{m,n}:= \Res_{H_f=0}\frac{X^m Y^{n-fm}}{H_f}\cdot \frac{dX}{X}\wedge\frac{dY}{Y}=
\frac{-X^m Y^{n-fm}}{Y\frac{\partial}{\partial Y} H_f(X,Y,q)}\frac{dX}{X}
$$
is a holomorphic 1-form on $\Si_q$.
We view $X$ as a flat coordinate independent of $q$, and use implicit differentiation to obtain
$$
\frac{\partial Y}{\partial q_a}= \frac{-X^{m_a} Y^{n_a-fm_a}}{\frac{\partial H}{\partial Y}}.
$$

We define
$$
\nabla_{\frac{\partial}{\partial q_a}}\Phi :=
\left.\frac{\partial \Phi}{\partial q_a}\right|_{X=\textup{constant}}
=\frac{\partial Y}{\partial q_a}\frac{dX}{Y X}.
$$
Then
$$
\nabla_{\frac{\partial}{\partial q_a}}\Phi=\varpi_{m_a,n_a},\quad
a=1,\ldots,p.
$$

\subsection{Differentials of the first kind on $\bar{\Si}_q$}\label{sec:first}

Recall that a differential of the first kind is a holomorphic 1-form.
By results in \cite{BC94}, the holomorphic 1-form $\varpi_{m,n}$ on $\Si_q$
extends to a holomorphic 1-form on $\bar{\Si}_q$ iff $(m,n)$ is in
the interior of the triangle $\triangle$ with vertices $(r,-s)$, $(0,m)$, $(0,0)$.
Without loss of generality, we may assume that
$(m_a,n_a)$ is in the interior of $\triangle$ for $1\leq a\leq \fg$ and
is on the boundary of $\triangle$ for $\fg+1\leq a\leq p$; note that $\fg$ can be zero.
Let $\fn= p-\fg+3$ be the number of lattice points on the boundary of $\triangle$.
Then $\Si_q$ is a Riemann surface of genus $\fg$ with $\fn$ punctures, and
$\bar{\Si}_q$ is a compact Riemann surface of genus $\fg$.
We have
$$
p=\dim_{\bC}H^2_{\CR}(\cX;\bC),\quad \fg =\dim_{\bC}H^4_{\CR}(\cX;\bC),
$$
and
$$
-\chi(\Si_q) = 2\fg-2 + \fn = 1+p +\fg = |G| = \dim_{\bC}H^*_{\CR}(\cX;\bC).\\
$$
A basis of $H^0(\bar{\Si}_q,\omega_{\bar{\Si}_q})=H^{1,0}(\bar{\Si}_q,\bC)$  is given by
$$
\{ \nabla_{\frac{\partial}{\partial q_a}}\Phi=\varpi_{m_a,n_a}: a=1,\ldots,\fg\}.
$$

Let $B_\ep=\{ (q_1,\ldots, q_p)\in \bC^p: |q|< \ep\}$
be an open ball in $\bC^p$ with radius $\ep>0$ centered at the origin. Let
$\boSi_\ep =\{(X,Y,q)\in \bC^*\times \bC^* \times \bC^p: H_f(X,Y,q)=0\}$
be the family of mirror curves over $B_\ep$, and let
$\pi_\ep: \boSi_\ep\to B_\ep$ be given by $(X,Y,q)\mapsto q$,
so that $\pi_\ep^{-1}(q)=\Si_q$. Then $G^*$ acts on
the total space $\boSi_\ep$ by
\begin{equation}
\alpha\cdot (X,Y,q_a) = (\chi_\alpha(\eta_1) X, \chi_\alpha(\eta_2) Y,  \chi_\alpha(h_a^{-1})q_a),
\end{equation}
where $\alpha\in G^*$, $\chi_\alpha:G\to \bC^*$ is the associated character, $\eta_1,\eta_2\in G$
are defined as in Equation \eqref{eqn:eta}, and $h_a\in G$ corresponds to
$(m_a,n_a,1)\in \Bsi$ for $a=1\ldots, p$.
The $G^*$-action on $\boSi_\ep$ extends to the family of compactified mirror curve
$\bar{\pi}_\ep: \overline{\boSi}_\ep\to B_\ep$, where
$\bar{\pi}_\ep^{-1}(q)=\bar{\Si}_q$. In particular, $G^*$ acts on $\bar{\Si}_0$,
and the induced $G^*$-action on $H_1(\bar{\Si}_0;\bC)$ preserves the
intersection pairing $\cdot$ on $H_1(\bar{\Si}_{0};\bC)$. We choose a symplectic basis
$\bar{A}_1,\bar{B}_1,\ldots, \bar{A}_\fg, \bar{B}_\fg$ of $H_1(\bar{\Si}_0;\bC)$ such that
$$
\bar{A}_a\cdot \bar{A}_b=\bar{B}_a\cdot \bar{B}_b=0,\quad \bar{A}_a\cdot \bar{B}_b =\delta_{ab},
\quad a, b=1,\ldots, \fg,
$$
and
$$
\frac{1}{2\pi\sqrt{-1}}\int_{\bar{A}_a}(\nabla_{\frac{\partial}{\partial q_b}}\Phi)\big|_{q=0}
=\delta_{ab}, \quad a,b=1,\ldots,\fg.
$$
The $G^*$-action on $\bar{A}_a$ is given by
$\alpha\cdot \bar{A}_a = \chi_\alpha(h_a) \bar{A}_a$, where $\alpha\in G^*$
and $1\leq a\leq \fg$. We may choose $\bar{B}_a$ such that
$\alpha\cdot \bar{B}_a =\chi_\alpha(h_a^{-1})\bar{B}_a$ for $\alpha\in G^*$ and $1\leq a\leq\fg$.

The inclusion $I_0: \Si_0\to (\bC^*)^2$ is $G^*$-equivariant, where $\alpha\in G^*$ acts on
$(\bC^*)^2$ by $\alpha\cdot(X,Y)= (\chi_\alpha(\eta_1)X,\chi_\alpha(\eta_2)Y)$. So the $G^*$-actions
on $H_1((\bC^*)^2;\bZ)$ and $H_1((\bC^*)^2;\bC)$ are trivial, and
the $G^*$-action on $H_1(\Si_0;\bC)$ preserves $K_1(\Si_0;\bC)=\mathrm{Ker}(I_*:H_1(\Si_0;\bC)\to H_1((\bC^*)^2;\bZ)$.

We extend $\bar{A}_1,\bar{B}_1,\ldots, \bar{A}_{\fg}, \bar{B}_{\fg}$ to a symplectic
basis of $H_1(\bar{\Si}_q;\bC)$ such that $\bar{A}_i$, $\bar{B}_i$ are flat sections
of $\bar{\cH}$ with respect to the Gauss-Manin connection.
Note that the composition
$$
K_1(\Si_q;\bC) \to  H_1(\Si_q;\bC) \stackrel{J_*}{\longrightarrow} H_1(\bar{\Si}_q;\bC)
$$
is surjective. For $a=1,\ldots, \fg$, we lift $\bar{A}_a, \bar{B}_a\in H_1(\bar{\Si}_q;\bC)$ to
$A_a, B_a \in K_1(\Si_q;\bC)$.
We choose $A_a, B_a\in K_1(\Si_q;\bC)$ such that they are flat sections of $\cK$
with respect to the Gauss-Manin connection, and are eigenvectors
of the $G^*$-action on $K_1(\Si_0;\bC)$ at $q=0$.

\subsection{Differentials of the third kind on $\bar{\Si}_q$} \label{sec:third}

Recall that a differential of the third kind is a meromorphic 1-form with  only simple poles.

Let $E_1, E_2, E_3$ be the edges of the triangle $\triangle$ opposite to the vertices $(r,-s)$,  $(0,m)$, $(0,0)$, respectively.
Let $\fn_i+1$ be the number of lattice points on $E_i$ (including
the end points).
Then $G_i:=\{ h\in G: \chi_i(h)=1\}\cong \bmu_{\fn_i}$; in particular, $\fn_1=m$.
For $i=1,2,3$, we have short exact sequences of abelian groups
$$
1\to G_i =\bmu_{\fn_i}\to G\stackrel{\chi_i}{\to} \bmu_{r_i}\to 1,\quad
1\to \bmu_{r_i}^* \to G^* \to G_i^*\to 1.
$$
The $G^*$-action on $\bar{\Si}_0$ preserves the finite set $\bar{\Si}_0\cap D_i$,
where $D_i\subset S_\triangle$ is the torus invariant divisor
associated to the edge $E_i$. The $G^*$-action on $\bar{\Si}_0\cap D_i$
induces a free and transitive $G_i^*$-action on $\bar{\Si}_0\cap D_i$.
So we may label the point in $\bar{\Si}_0\cap D_i$ (noncanonically) by elements in
$G_i^* =\bmu_{\fn_i}^*$: $\bar{\Si}_0\cap D_i =\{ \bar{p}^i_\ell: \ell\in \bmu_{\fn_i}^*\}$.

If $\fg+1\leq a\leq p$ then $h_a\in G_i\setminus \{1\}\cong \bmu_{\fn_1}\setminus\{1\}$  for some
$i\in \{1,2,3\}$. The meromorphic 1-form
$\nabla_{\frac{\partial}{\partial q_a}}\Phi$ has a simple pole at each point in
$\{ \bar{p}^i_\ell: \ell\in G_i^*\}$, and is holomorphic elsewhere on
the compactified mirror curve $\bar{\Si}_q$, so it is a differential of the third kind on
$\bar{\Si}_q$. We have
$$
\Res_{p\to \bar{p}^i_\ell}\bigl( \nabla_{\frac{\partial}{\partial q_a}}\Phi\bigr)\big|_{q=0}
=\frac{e^{\pi \sqrt{-1} k/\fn_i}}{\fn_i}\chi_\ell(k),
$$
if $h_a=e^{2\pi \sqrt{-1}k/\fn_i} \in \bmu_{\fn_i}$, $1\leq k\leq \fn_i-1$,
and $\chi_\ell:G_i\to \bC^*$ is the character associated to $\ell\in G_i^*$.
Let $C^i_\ell \in H_1(\Si_0;\bZ)$ be the class of a small circle around
$\bar{p}^i_\ell$.  Then
there exists $A_a\in K_1(\Si_0;\bC)$ ($a=\fg+1,\cdots,p$) such that
\begin{itemize}
\item $A_a$ is an eigenvector of the $G^*$-action on $K_1(\Si_0;\bC)$;
\item $A_a$  is a $\bC$-linear combination of $C^i_\ell$, $\ell\in G_i^*$;
\item for $a,b=1,\ldots,p$,
$$
\frac{1}{2\pi\sqrt{-1}}\int_{A_a} (\nabla_{\frac{\partial}{\partial q_b}}\Phi )\big|_{q=0} = \delta_{ab}.
$$
\end{itemize}
We extend $A_{\fg+1},\ldots, A_p$ to flat sections of $K_1(\Si_q;\bC)$.
Then for $a,b\in \{1,\ldots,p\}$,
$$
\frac{1}{2\pi\sqrt{-1}}\int_{A_a} \nabla_{\frac{\partial}{\partial q_b}}\Phi = \delta_{ab} + O(|q|).
$$
\begin{definition}[B-model closed string flat coordinates]\label{flat-coordinates}
For $a=1,\ldots,p$, let $\tau_a(q)$ be the unique function in $\Gamma(B_\ep,\bC)$ such that $\tau_a(0)=0$ and
$$
\frac{1}{2\pi\sqrt{-1}}\int_{A_a}\Phi  = \tau_a(q) + \bC\in \Gamma(B_\ep,\bC)/\bC.
$$
\end{definition}
Then
$$
\tau_a(q)= q_a + O(|q|^2)
$$
and
$$
\frac{1}{2\pi\sqrt{-1}}\int_{A_a}\nabla_{\frac{\partial}{\partial \tau_b}}\Phi=\delta_{ab},\quad
a,b=1,\ldots, p.
$$

The B-model closed string flat coordinates are related to $q_a,\ a=1,\dots,p$ via the following
hypergeometric formulae
\begin{equation}\label{eqn:mirror-map}
\tau_a(q)=q_a\Big(\sum_{\substack{d_b\in\bZ_{\geq 0}\\ \{\sum_{b=1}^p d_b c_i(h_b)\}=0,\ i=1,2,3}}\prod_{i=1}^3\frac{\Gamma(-\{c_i(h_a)\}+1)}{\Gamma(-c_i(h_a)-\sum_{b=1}^pd_b c_i(h_b)+1)}\cdot\frac{1}{\prod_{b=1}^p d_b!} \prod_{b=1}^p q_b^{d_b}\Big).
\end{equation}
Here $\{ x\}$ denotes the fractional part of $x$.
This mirror map is obtained by solving the GKZ-type Picard-Fuchs equations. Iritani explains these GKZ-operators and explicitly writes down the mirror map for general toric orbifolds \cite{Ir09}. The integrals over A-cycles on mirror curves as flat coordinates and their hypergeometric expressions for toric Calabi-Yau 3-folds are discussed in \cite{CKYZ} together with genus $0$ closed Gromov-Witten mirror symmetry.

Let  $D_q^\infty = \bar{\Si}_q\setminus \Si_q = \bigcup_{i=1}^3 \{\bar{p}^i_1,\ldots, \bar{p}^i_{\fn_i}\}$.
By Lefschetz duality, there is a perfect pairing
\begin{equation}\label{eqn:pairing}
H_1(\Si_q;\bC)\times H_1(\bar{\Si}_q,D_q^\infty;\bC)\to \bC.
\end{equation}
The inclusion $\Si_q\subset \bar{\Si}_q$ induces a surjective $\bC$-linear map
$$
H_1(\Si_q;\bC)\cong \bC^{2\fg+\fn-1}\to H_1(\bar{\Si}_q;\bZ)\cong \bC^{2g}.
$$
Our choice of $A_1,B_1,\ldots, A_{\fg}, B_{\fg}$ gives a splitting
\begin{equation}\label{eqn:j-one}
j_1:H_1(\bar{\Si}_q;\bC)\to H_1(\Si_q;\bC).
\end{equation}
The long exact sequence of relative homology groups of the pair $(\bar{\Si}_q, D^\infty_q)$
gives an injective $\bC$-linear map
\begin{equation}\label{eqn:j-two}
j_2: H_1(\bar{\Si}_q;\bC)\to H_1(\bar{\Si}_q,D^\infty_q;\bC).
\end{equation}
Under the inclusion maps $j_1$ and $j_2$,
the perfect pairing \eqref{eqn:pairing} restricts to the intersection
pairing
$$
H_1(\bar{\Si}_q;\bC)\times H_1(\bar{\Si}_q;\bC)\to \bC,
$$
which is perfect by Poincar\'{e} duality. For $\fg+1\leq a\leq p$, choose
$B_a \in  H_1(\bar{\Si}_q,D_q^\infty;\bC)$
such that
\begin{itemize}
\item $(A_1,\ldots, A_p, B_1,\ldots, B_\fg)$
and $(B_1,\ldots,B_p, -A_1,\ldots, -A_{\fg})$
are dual under the pairing \eqref{eqn:pairing};
\item $\alpha\cdot B_a =\chi_\alpha(h_a^{-1}) B_a$ for
$\alpha\in G^*$, $\fg+1\leq a\leq p$.
\end{itemize}
Then for $a=1,\ldots,p$,
$$
(\nabla_{\frac{\partial}{\partial \tau_a}}\Phi)(z)
= \int_{z'\in B_a} B(z',z),
$$
where $B(z',z)$ is the fundamental normalized differential of the second kind on
$\bar{\Si}_q$  (see Section \ref{sec:second}).

\subsection{Critical points and Lefschetz thimbles}\label{sec:thimbles}
We choose framing $f$ such that $X:\Si_0\to \bC^*$ has only simple branch points.
Then $X:\Si_q\to \bC^*$ has only simple branch points for $q$ sufficiently small.

The critical points of
$X:\Si_0 =\{ (X,Y)\in (\bC^*)^2: X^r Y^{-s-rf} + Y^m + 1=0\}\to \bC^*$
are:
$$
\{ (X_{j\ell}, Y_{j\ell}): j\in \{0,1,\ldots, r-1\}, \ell\in \{0,1,\ldots, m-1\} \},
$$
where
\begin{eqnarray*}
X_{j\ell} &=& \exp\Big(2\pi\sqrt{-1}(\frac{j}{r} + \ell\frac{s+rf}{rm})\Big) m^{1/r} (s+rf)^{\frac{s+rf}{rm}}(-m-s-rf)^{\frac{-m-s-rf}{rm} } = (\chi_1^j \chi_2^\ell)(\eta_1)\prod_{i=1}^3(|G|w_i)^{w_i},\\
Y_{j\ell} &=& \exp\Big(2\pi\sqrt{-1}\frac{\ell}{m}\Big) (\frac{s+rf}{-m-s-rf})^{1/m}
= (\chi_1^j\chi_2^\ell)(\eta_2) (\frac{w_2}{w_3})^{1/m}.
\end{eqnarray*}
If $\alpha =\chi_1^j\chi_2^\ell\in G^*$ then we define
$(X_\alpha,Y_\alpha)=(X_{j\ell}, Y_{j\ell})$. The
critical points of $X:\Si_0\to \bC^*$ are
$\{(X_\alpha,Y_\alpha): \alpha\in G^*\}$. We define
\begin{equation}\label{eqn:a-alpha}
a_\alpha := - \log(X_\alpha)= -\sum_{i=1}^3 w_i \log w_i - \log(\chi_\alpha(\eta_1))
\end{equation}
\begin{equation}\label{eqn:b-alpha}
b_\alpha := -\log(Y_\alpha) =  \frac{1}{m}\log(w_3/w_2) - \log(\chi_\alpha(\eta_2)).
\end{equation}
In Equation \eqref{eqn:a-alpha}, we use the identity
$\sum_{i=1}^3 w_i \log(|G|w_i) = \sum_{i=1}^3 w_i \log w_i$ since
$\sum_{i=1}^3 w_i =0$. Note that
$$
\log(\chi_\alpha(\eta_1))=\sqrt{-1}\vartheta_\alpha,\quad
\log(\chi_\alpha(\eta_2))=\sqrt{-1}\varphi_\alpha
$$
for some $\vartheta_\alpha,\varphi_\alpha\in \bR$.
We may assume that $\vartheta_\alpha,\varphi_\alpha\in [0,2\pi)$.

Let $X= e^{-x}$, $Y=e^{-y}$. Around each critical point $p_\alpha$, we set up the following local coordinates
$$
x = a_\alpha(q) +\zeta_\alpha(q)^2, \quad
y = b_\alpha(q) + \sum_{d=1}^\infty h_d^\alpha(q) \zeta_\alpha(q)^d,
$$
where
$$
h_1^\alpha(q) = \sqrt{\frac{2}{\frac{d^2x}{dy^2}(b_\alpha(q))} }.
$$

Let $\gamma_\alpha$ be the Lefschetz thimble of the superpotential
$x=-\log X: \Sigma_q\to \bC$, such that $x(\gamma_\alpha)=a_\alpha+\bR^+$.
Then $\{\gamma_\alpha:\alpha\in G^*\}$ is a basis of the relative homology group
$$
H_1(\Si_q, \{(X,Y)\in \Si_q: \log X \ll 0  \};\bZ) \cong \bZ^{|G|}.
$$

\subsection{Differentials of the second kind on $\bar{\Si}_q$}\label{sec:second}

Recall that a differential of the second kind is a meromorphic 1-form with
no residues.

We choose a symplectic basis $\{A_1, B_1,\ldots, A_{\fg}, B_{\fg}\}$ of
$H_1(\bar{\Si}_q,\bC)$ as in Section \ref{sec:first}.
Let $B(p_1,p_2)$ be the fundamental normalized differential of the second kind on
$\bar{\Si}_q$ (see e.g. \cite{Fay}), which is characterized by:
\begin{enumerate}
\item $B(p_1,p_2)$ is a bilinear symmetric meromorphic differential on $\bar{\Si}_q\times \bar{\Si}_q$.
\item $B(p_1,p_2)$ is holomorphic everywhere except for a double pole along the diagonal $p_1=p_2$.
If $z_1$, $z_2$ are local coordinates on $\bar{\Si}_q\times \bar{\Si}_q$ then
$$
B(z_1,z_2)= (\frac{1}{(z_1-z_2)^2} + f(z_1,z_2) )dz_1 dz_2.
$$
where $f(z_1, z_2)$ is holomorphic.

\item $\displaystyle{\int_{p_1\in A_i} B(p_1,p_2)=0 }$, $i=1\ldots, \fg$.
\end{enumerate}

Let $p_\alpha(q)= (X_\alpha(q),Y_\alpha(q))$ be the branch point
of $X:\bar{\Si}_q\to \bC^*$ such that
$$
\lim_{q\to 0}(X_\alpha(q),Y_\alpha(q))=(X_\alpha, Y_\alpha).
$$
Following \cite{E14, EO15}, given $\alpha\in G^*$ and $d\in \bZ_{\geq 0}$, define
\begin{equation}\label{eqn:theta}
\theta^\alpha_d(p):= -(2d-1)!! 2^{-d}\Res_{p'\to p_\alpha}
B(p,p')\zeta_\alpha^{-2d-1}.
\end{equation}
(In this paper, we use the symbol $\theta^\alpha_d$ instead of the symbol $d\xi_{\alpha,d}$ in \cite{E11, EO15} because the 1-form defined by the right hand side of \eqref{eqn:theta}
is not necessarily exact.)
Then $\theta^\alpha_d$ satisfies the following properties.
\begin{enumerate}
\item $\theta^\alpha_d$ is a meromorphic 1-form on $\bar{\Si}_q$ with
a single pole of order $2d+2$ at $p_\alpha$.
\item In local coordinate $\zeta_\alpha$ near $p_\alpha$,
$$
\theta^\alpha_d = \Big( -\frac{(2d+1)!!}{2^d \zeta_\alpha^{2d+2}}
+ f(\zeta_\alpha)\Big) d\zeta_\alpha,
$$
where $f(\zeta_\alpha)$ is analytic around $p_\alpha$.
The residue of $\theta^\alpha_d$ at $p_\alpha$ is zero,
so $\theta^\alpha_d$ is a differential of the second kind.

\item $\int_{A_i}\theta^\alpha_d=0$ for $i=1,\ldots, \fg$.
\end{enumerate}
The meromorphic 1-form $\theta^\alpha_d$ is characterized by the above
properties; $\theta^\alpha_d$ can be viewed as a section in
$H^0(\bar{\Si}_q, \omega_{\bar{\Si}_q}((2d+2) p_\alpha) )$.

\begin{lemma}\label{mero}
Suppose that $f$ is a meromorphic function on $\bar{\Si}_q$ with simple poles
at the ramification points $\{ p_\beta:\beta \in G^*\}$, and is
holomorphic on $\bar{\Si}_q\setminus\{ p_\beta:\beta\in G^*\}$.
Then
$$
df=\sum_{\beta\in G^*} c_\beta \theta^\beta_0,
$$
where $c_\beta = \displaystyle{ \lim_{p\to p_\beta}f\zeta_\beta }$.
\end{lemma}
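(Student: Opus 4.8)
The plan is to compare principal parts and $A$-periods of the two sides at the branch points, and then invoke the uniqueness of holomorphic differentials with vanishing $A$-periods. The branch points $\{p_\beta:\beta\in G^*\}$ are the $|G|$ critical points of $X:\bar{\Si}_q\to\bC^*$ described in Section \ref{sec:thimbles}, and near each of them $\zeta_\beta$ (defined by $x-a_\beta=\zeta_\beta^2$ with $h_1^\beta\neq 0$) is a genuine local holomorphic coordinate on the smooth curve $\bar{\Si}_q$.

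First I would read off the local behaviour of $f$. Since $f$ has at worst a simple pole at $p_\beta$ and is holomorphic elsewhere, its Laurent expansion in $\zeta_\beta$ is
$$
f=\frac{c_\beta}{\zeta_\beta}+(\text{holomorphic in }\zeta_\beta),
$$
where the leading coefficient is precisely $c_\beta=\lim_{p\to p_\beta}f\zeta_\beta$, as in the statement. Differentiating,
$$
df=\Big(-\frac{c_\beta}{\zeta_\beta^2}+(\text{holomorphic})\Big)\,d\zeta_\beta ,
$$
so $df$ has a double pole with vanishing residue at each $p_\beta$ and is holomorphic away from the $p_\beta$; in particular $df$ is a differential of the second kind.

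Next I would match principal parts against the right-hand side. Taking $d=0$ in property (2) of $\theta^\beta_0$ (with $(-1)!!=1!!=1$) gives, near $p_\beta$,
$$
\theta^\beta_0=\Big(-\frac{1}{\zeta_\beta^2}+(\text{holomorphic})\Big)\,d\zeta_\beta,
$$
while $\theta^{\beta'}_0$ is holomorphic at $p_\beta$ for $\beta'\neq\beta$. Hence $\sum_{\beta\in G^*}c_\beta\theta^\beta_0$ has principal part $-c_\beta\zeta_\beta^{-2}\,d\zeta_\beta$ at each $p_\beta$, exactly matching that of $df$. Therefore the difference $\omega:=df-\sum_{\beta\in G^*}c_\beta\theta^\beta_0$ is a meromorphic $1$-form with no poles at all, i.e. a holomorphic $1$-form on $\bar{\Si}_q$.

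Finally I would use the normalization by $A$-periods. Because $f$ is single-valued, $\int_{A_i}df=0$ for each $i$ (after isotoping $A_i$ off the finitely many $p_\beta$), and $\int_{A_i}\theta^\beta_0=0$ by property (3); hence $\int_{A_i}\omega=0$ for $i=1,\dots,\fg$. Since the $A$-period map on $H^0(\bar{\Si}_q,\omega_{\bar{\Si}_q})$ is injective on a compact Riemann surface (a holomorphic differential with all $A$-periods zero vanishes, by the Riemann bilinear relations), we conclude $\omega=0$, that is, $df=\sum_{\beta\in G^*}c_\beta\theta^\beta_0$. The argument is essentially routine; the only place demanding care is confirming that the residue-free double-pole normalization of $\theta^\beta_0$ matches that of $df$ with the correct sign and constant, but this follows directly from the definition \eqref{eqn:theta} and its property (2).
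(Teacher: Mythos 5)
Your proof is correct and follows exactly the paper's own argument: subtract $\sum_\beta c_\beta\theta^\beta_0$ from $df$, observe the principal parts cancel so the difference is a holomorphic $1$-form with vanishing $A$-periods, and conclude it vanishes. You simply make explicit the local Laurent computation and the injectivity of the $A$-period map that the paper's terse proof leaves implicit.
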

\begin{proof}
Let $\displaystyle{\Delta \omega = df-\sum_{\beta\in G^*} c_\beta \theta^\beta_0 }$.
Then $\Delta\omega$ is a holomorphic 1-form on $\bar{\Si}_\alpha$, and
$$
\int_{A_i}\Delta\omega =0, \quad i=1,\ldots, \fg.
$$
So $\Delta\omega =0$.
\end{proof}

\begin{remark}
In Lemma \ref{mero}, different choice of $A$-cycles $A_1,\ldots A_{\fg}$
will give different meromorphic 1-forms $\theta^\beta_0$, but the equality
$$
df =\sum_{\beta\in G^*} c_\beta \theta^\beta_0
$$
still holds, where the coefficients $c_\beta =
\displaystyle{ \lim_{p\to p_\beta}f\zeta_\beta}$ do not
depend on the choice of $A$-cycles.
\end{remark}

We make the following observations:
\begin{enumerate}
\item $dx=-\frac{dX}{X}$ is a meromorphic 1-form on $\bar{\Si}_q$
which is holomorphic on $\Si_q$. It
has a simple zero at each of the $|G|$ ramification points $\{ p_\alpha: \alpha\in G^*\}$
and a simple pole at each of the $\fn$ punctures $\bar{p}_1,\ldots, \bar{p}_{\fn}$.

\item $dy=-\frac{dY}{Y}$ is a meromorphic 1-form on $\bar{\Si}_q$
and a holomorphic 1-form on $\Si_q$. It is nonzero at each of the $|G|$ ramification points, and has at most simple pole at each of the $\fn$ punctures.

\item For $a=1,\ldots, \fg$,
$\displaystyle{ \nabla_{\frac{\partial}{\partial q_a}}\Phi }$
is a  holomorphic 1-forms on $\bar{\Si}_q$ which is nonzero at each of the $|G|$ ramification point.

\item For $a=\fg+1,\ldots, p$, $\displaystyle{\nabla_{\frac{\partial}{\partial q_a}}\Phi}$
is a meromorphic 1-forms on $\bar{\Si}_q$ which is holomorphic on $\Si_q$.
It is nonzero at each of the $|G|$ ramification points, and
has at most simple pole at each of the $\fn$ punctures.
\end{enumerate}

Based on the above observations, the following are
meromorphic function on $\bar{\Si}_q$ satisfying the assumption of Lemma \ref{mero}:
$$
\frac{dy}{dx}, \quad \frac{ \nabla_{\frac{\partial}{\partial q_a}}\Phi }{dx} =\frac{\partial y}{\partial q_a},
$$
where $a=1,\ldots, p$ and $\alpha\in G^*$.
\begin{proposition}\label{Phi-zero-one}
\begin{equation}\label{eqn:Phi-zero}
d(\frac{dy}{dx}) = \frac{1}{2} \sum_{\beta\in G^*} h^\beta_1(q) \theta^\beta_0.
\end{equation}
For $a=1,\ldots, p$,
\begin{equation}\label{eqn:Phi-one}
d\Bigl(\frac{ \nabla_{\frac{\partial}{\partial q_a}}\Phi }{dx}  \Bigr)
=\frac{1}{2} \sum_{\beta \in G^*} \frac{X_\beta(q)^{m_a} Y_\beta(q)^{n_a-fm_a} }
{\frac{\partial H}{\partial x}(X_\beta(q), Y_\beta(q),q)}
h^\beta_1(q) \theta^\beta_0.
\end{equation}
\end{proposition}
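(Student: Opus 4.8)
The plan is to deduce both identities directly from Lemma \ref{mero}, applied to the two meromorphic functions $\frac{dy}{dx}$ and $\frac{\nabla_{\frac{\partial}{\partial q_a}}\Phi}{dx}=\frac{\partial y}{\partial q_a}$ on $\bar{\Si}_q$. The first task is to check that each of these satisfies the hypothesis of Lemma \ref{mero}: holomorphic on $\bar{\Si}_q$ away from the ramification points $\{p_\beta:\beta\in G^*\}$, with at most a simple pole at each $p_\beta$. For $\frac{dy}{dx}$ this follows from observations (1) and (2): $dx$ has a simple zero at each $p_\beta$ and a simple pole at each puncture, while $dy$ is nonzero at every $p_\beta$ and has at most a simple pole at each puncture; hence $\frac{dy}{dx}$ is holomorphic off $\{p_\beta\}$, acquires a simple pole at each $p_\beta$, and stays bounded (holomorphic) at the punctures. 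For $\frac{\partial y}{\partial q_a}$ the same conclusion follows from observations (3) and (4) together with the behavior of $dx$. Once the hypotheses are in place, Lemma \ref{mero} reduces each formula to computing the coefficients $c_\beta=\lim_{p\to p_\beta} f\,\zeta_\beta$.

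For \eqref{eqn:Phi-zero} the coefficient is immediate from the local expansion of Section \ref{sec:thimbles}. Near $p_\beta$ we have $x=a_\beta+\zeta_\beta^2$ and $y=b_\beta+\sum_{d\ge 1}h^\beta_d\zeta_\beta^d$, so $dx=2\zeta_\beta\,d\zeta_\beta$ and $dy=(h^\beta_1+O(\zeta_\beta))\,d\zeta_\beta$, giving $\frac{dy}{dx}=\frac{h^\beta_1}{2\zeta_\beta}+O(1)$. Thus $c_\beta=\frac{h^\beta_1}{2}$, and Lemma \ref{mero} yields \eqref{eqn:Phi-zero}.

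For \eqref{eqn:Phi-one} I would compute $c_\beta$ by combining implicit differentiation of $H=0$ with the same local expansion. Differentiating $H(e^{-x},e^{-y},q)=0$ in $q_a$ at constant $x$ gives $\frac{\partial y}{\partial q_a}=-X^{m_a}Y^{n_a-fm_a}/\frac{\partial H}{\partial y}$, where $\frac{\partial H}{\partial y}$ denotes the derivative in $y=-\log Y$. Since $p_\beta$ is a branch point of $X$, we have $\frac{\partial H}{\partial y}(p_\beta)=0$; expanding $\frac{\partial H}{\partial y}$ along the curve and using $x-a_\beta=\zeta_\beta^2$, $y-b_\beta=h^\beta_1\zeta_\beta+O(\zeta_\beta^2)$ shows it vanishes to first order with leading term $\frac{\partial^2 H}{\partial y^2}\,h^\beta_1\,\zeta_\beta$. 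Hence $c_\beta=-X_\beta^{m_a}Y_\beta^{n_a-fm_a}/\bigl(\frac{\partial^2 H}{\partial y^2}\,h^\beta_1\bigr)$. To match the stated coefficient I would invoke the Hessian identity $\frac{d^2 x}{dy^2}\big|_{p_\beta}=-\frac{\partial^2 H}{\partial y^2}\big/\frac{\partial H}{\partial x}$, obtained by differentiating $\frac{dx}{dy}=-\frac{\partial H}{\partial y}\big/\frac{\partial H}{\partial x}$ once more along the curve and using $\frac{\partial H}{\partial y}(p_\beta)=0$; combined with $(h^\beta_1)^2=2\big/\frac{d^2 x}{dy^2}$ this rewrites $c_\beta$ as $\tfrac12\,X_\beta^{m_a}Y_\beta^{n_a-fm_a}\,h^\beta_1\big/\frac{\partial H}{\partial x}$, which is exactly the coefficient in \eqref{eqn:Phi-one}.

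The routine part is the verification of the pole structure; the one place demanding care is the coefficient computation for \eqref{eqn:Phi-one}, where one must track the interplay between the geometric local coordinate $\zeta_\beta$ (defined through $x-a_\beta=\zeta_\beta^2$) and the algebraic derivatives of $H$, and keep the $x=-\log X$, $y=-\log Y$ conventions straight. The crux is the Hessian identity $\frac{d^2 x}{dy^2}=-\frac{\partial^2 H}{\partial y^2}\big/\frac{\partial H}{\partial x}$ at $p_\beta$, which converts the second derivative of the defining equation into the square of the leading coefficient $h^\beta_1$ of the Lefschetz-thimble coordinate and thereby pins down all signs and normalizations.
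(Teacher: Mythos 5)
Your proof is correct and follows the paper's own route: both identities are deduced from Lemma \ref{mero} after verifying the pole structure from the four observations preceding the proposition, with the coefficients $c_\beta=\lim_{p\to p_\beta}f\zeta_\beta$ extracted from the local coordinate $\zeta_\beta$, and your first coefficient computation is verbatim the paper's. The only (minor, equally valid) divergence is in evaluating $c_\beta$ for \eqref{eqn:Phi-one}: the paper avoids second derivatives altogether by writing $\frac{-1}{\partial H/\partial y}=\frac{1}{\partial H/\partial x}\cdot\frac{dy}{dx}$ on the curve and reusing $\lim_{p\to p_\beta}\frac{dy}{dx}\zeta_\beta=\frac{h_1^\beta(q)}{2}$ from the first part, whereas you expand $\frac{\partial H}{\partial y}$ to first order near $p_\beta$ and combine the Hessian identity $\frac{d^2x}{dy^2}\big|_{p_\beta}=-\frac{\partial^2 H}{\partial y^2}\big/\frac{\partial H}{\partial x}$ with the normalization $(h_1^\beta)^2=2\big/\frac{d^2x}{dy^2}$ from Section \ref{sec:thimbles} -- a slightly longer computation that lands on the same coefficient.
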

Equation \eqref{eqn:Phi-zero} was proved in \cite[Appendix D]{EO15}; we include it
for completeness.
\begin{proof}[Proof of Proposition \ref{Phi-zero-one}]  Near $p_\beta$,
$$
\frac{dy}{dx}\zeta_\beta  = \frac{\sum_{k=1}^\infty h^\beta_k(q) k(\zeta_\beta)^k d\zeta_\beta}{2\zeta_\beta d\zeta_\beta}=
\frac{1}{2}\sum_{k=1}^\infty h^\beta_k(q)  k(\zeta_\beta)^{k-1}.
$$
$$
\frac{\nabla_{\frac{\partial}{\partial q_a}}\Phi}{dx} \zeta_\beta
= \frac{-X^{m_a} Y^{n_a-fm_a}}{\frac{\partial H}{\partial y}(X,Y,q)}\zeta_\beta
=\frac{X^{m_a} Y^{n_a-fm_a}}{\frac{\partial H}{\partial x}(X,Y,q)} \cdot \frac{dy}{dx}\zeta_\beta
$$
So
\begin{eqnarray*}
\lim_{p\to p_\beta} \frac{dy}{dx}\zeta_\beta &=& \frac{h_1^\beta(q)}{2}\\
\lim_{p\to p_\beta}\frac{\nabla_{\frac{\partial}{\partial q_a}}\Phi}{dx}\zeta_\beta &= &
\frac{X_\beta(q)^{m_a} Y_\beta(q)^{n_a-fm_a}}{\frac{\partial H}{\partial x}(X_\beta(q), Y_\beta(q),q)}\frac{h_1^\beta(q)}{2}.
\end{eqnarray*}
The proposition follows from Lemma \ref{mero}.
\end{proof}

\begin{proposition}\label{Phi-One-Two}
\begin{equation}\label{eqn:Phi-One}
d\Bigl(\frac{ \nabla_{\frac{\partial}{\partial \tau_a}}\Phi }{dx}  \Bigr)
=\frac{1}{2} \sum_{\beta \in G^*} \left.\frac{\frac{\partial H}{\partial \tau_a} }
{\frac{\partial H}{\partial x}}\right|_{X=X_\beta(q), Y=Y_\beta(q)}\cdot
h^\beta_1(q) \theta^\beta_0.
\end{equation}
\begin{equation}\label{eqn:Phi-Two}
\nabla_{\frac{\partial}{\partial \tau_a}}\nabla_{\frac{\partial}{\partial \tau_b}}\Phi
=\frac{1}{2} \sum_{\beta \in G^*}
\left.\frac{\frac{\partial H}{\partial \tau_a} } {\frac{\partial H}{\partial x}}
\frac{\frac{\partial H}{\partial \tau_b} } {\frac{\partial H}{\partial x}}
\right|_{X=X_\beta(q), Y=Y_\beta(q)}\cdot
h^\beta_1(q) \theta^\beta_0.
\end{equation}
\end{proposition}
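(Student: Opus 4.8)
The identity \eqref{eqn:Phi-One} follows immediately from \eqref{eqn:Phi-one} by the chain rule. The mirror map expresses each $q_b$ as a function of $\btau$, and since $\partial/\partial\tau_a=\sum_b (\partial q_b/\partial\tau_a)\,\partial/\partial q_b$ as tangent vectors on the base, the Gauss-Manin connection satisfies $\nabla_{\frac{\partial}{\partial \tau_a}}=\sum_b \frac{\partial q_b}{\partial \tau_a}\nabla_{\frac{\partial}{\partial q_b}}$. As $H$ is linear in the $q_b$ and $X,Y$ are held fixed, $\frac{\partial H}{\partial \tau_a}=\sum_b \frac{\partial q_b}{\partial \tau_a}X^{m_b}Y^{n_b-m_bf}$, so contracting \eqref{eqn:Phi-one} against $\partial q_b/\partial\tau_a$ and summing over $b$ produces \eqref{eqn:Phi-One}. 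For \eqref{eqn:Phi-Two}, write $\psi_b:=\frac{\nabla_{\frac{\partial}{\partial \tau_b}}\Phi}{dx}$, a meromorphic function on $\bar{\Si}_q$. Since $dx=-\frac{dX}{X}$ depends only on $X$ while $\nabla_{\frac{\partial}{\partial \tau_a}}$ differentiates coefficients at fixed $X$, we have $\nabla_{\frac{\partial}{\partial \tau_a}}(dx)=0$, hence
\begin{equation*}
\nabla_{\frac{\partial}{\partial \tau_a}}\nabla_{\frac{\partial}{\partial \tau_b}}\Phi=(\nabla_{\frac{\partial}{\partial \tau_a}}\psi_b)\,dx .
\end{equation*}
The plan is to show the right-hand side is a differential of the second kind whose only poles are double poles at the ramification points $\{p_\beta:\beta\in G^*\}$ and whose $A$-periods vanish, and then to match principal parts with $\sum_\beta \tilde c_\beta\theta^\beta_0$; by the characterization of $\theta^\beta_0$ (the uniqueness underlying Lemma \ref{mero}) this forces the asserted equality.

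The local computation at a branch point $p_\beta$ is the heart of the matter. By \eqref{eqn:Phi-One} and Lemma \ref{mero}, $\psi_b$ has a simple pole there with $\lim_{p\to p_\beta}\psi_b\zeta_\beta=c^b_\beta:=\frac12\,(\partial H/\partial\tau_b)/(\partial H/\partial x)\big|_{p_\beta}\,h^\beta_1$, so $\psi_b=\frac{c^b_\beta}{\zeta_\beta}+(\text{analytic in }\zeta_\beta)$. The essential feature is that the pole location \emph{moves}: from $x=a_\beta(q)+\zeta_\beta^2$ we get $\partial_{\tau_a}\zeta_\beta\big|_x=-\frac{\partial_{\tau_a}a_\beta}{2\zeta_\beta}$, and differentiating the two equations $H=0$, $\frac{\partial H}{\partial y}=0$ that cut out $p_\beta$ yields $\partial_{\tau_a}a_\beta=-(\partial H/\partial\tau_a)/(\partial H/\partial x)\big|_{p_\beta}$. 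Applying $\nabla_{\frac{\partial}{\partial \tau_a}}=\partial_{\tau_a}|_x$ to the Laurent expansion of $\psi_b$ and multiplying by $dx=2\zeta_\beta\,d\zeta_\beta$, the $\zeta_\beta^{-3}$ term created by the moving pole becomes a pure $\zeta_\beta^{-2}d\zeta_\beta$ term while every $\zeta_\beta^{-1}$ contribution is promoted to order $\zeta_\beta^0$; thus $\nabla_{\frac{\partial}{\partial \tau_a}}\nabla_{\frac{\partial}{\partial \tau_b}}\Phi$ is automatically residue-free at $p_\beta$, with $\frac{1}{\zeta_\beta^2}d\zeta_\beta$-coefficient equal to $c^b_\beta\,\partial_{\tau_a}a_\beta$. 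Comparing with $\theta^\beta_0=(-\zeta_\beta^{-2}+\cdots)d\zeta_\beta$ gives $\tilde c_\beta=-c^b_\beta\,\partial_{\tau_a}a_\beta=\frac12\,\frac{\partial H/\partial\tau_a}{\partial H/\partial x}\frac{\partial H/\partial\tau_b}{\partial H/\partial x}\big|_{p_\beta}h^\beta_1$, exactly the coefficient in \eqref{eqn:Phi-Two}.

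It remains to pin down the global behavior. The $A$-periods vanish for free: the cycles $A_1,\dots,A_p$ are chosen Gauss-Manin flat, so $\int_{A_i}\nabla_{\frac{\partial}{\partial \tau_a}}\nabla_{\frac{\partial}{\partial \tau_b}}\Phi=\partial_{\tau_a}\partial_{\tau_b}\int_{A_i}\Phi=2\pi\sqrt{-1}\,\partial_{\tau_a}\partial_{\tau_b}\tau_i=0$ by \eqref{eqn:flat-coordinates}. The same device reconfirms residue-freeness at each $p_\beta$: the period of $\nabla_{\frac{\partial}{\partial \tau_b}}\Phi$ around a small loop enclosing $p_\beta$ vanishes identically in $\btau$ (that form is holomorphic at $p_\beta$), so its $\tau_a$-derivative, the residue of the second derivative, is zero.

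The main obstacle is the behavior at the punctures $\bar p^i_\ell$. There $X$ (or $Y$) is a fixed local coordinate and $\nabla_{\frac{\partial}{\partial \tau_b}}\Phi$ has at most a simple pole, so $\nabla_{\frac{\partial}{\partial \tau_a}}\nabla_{\frac{\partial}{\partial \tau_b}}\Phi$ again has at most a simple pole, with residue $\partial_{\tau_a}\partial_{\tau_b}\big(\Res_{\bar p^i_\ell}\Phi\big)$. One must show this vanishes, i.e. that the puncture residues of $\Phi$ (for instance $\Res_{\bar p^1_\ell}\Phi=\log Y_\ell$) are affine-linear in the flat coordinates $\btau$. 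This is precisely the special-geometry normalization: by construction the flat coordinates $\tau_a$ for $\fg+1\le a\le p$ are linear combinations of these residues, and the nondegeneracy of the period normalization $\frac{1}{2\pi\sqrt{-1}}\int_{A_a}(\nabla_{\frac{\partial}{\partial q_b}}\Phi)|_{q=0}=\delta_{ab}$ lets one invert the relation, so each residue is an affine-linear function of $\btau$ and its second derivatives vanish. (An alternative, conceptually cleaner route to the whole proposition is to differentiate the identity $(\nabla_{\frac{\partial}{\partial \tau_a}}\Phi)(z)=\int_{z'\in B_a}B(z',z)$ under the integral sign using Rauch's variational formula for $\partial_{\tau_b}B$, which reproduces the same $\theta^\beta_0$-expansion; I prefer the direct computation above since it uses only \eqref{eqn:Phi-One} and Lemma \ref{mero}.) Granting the linearity of the residues, $\nabla_{\frac{\partial}{\partial \tau_a}}\nabla_{\frac{\partial}{\partial \tau_b}}\Phi-\sum_\beta \tilde c_\beta\theta^\beta_0$ is a holomorphic $1$-form on $\bar{\Si}_q$ with vanishing $A$-periods, hence identically zero, which proves \eqref{eqn:Phi-Two}.
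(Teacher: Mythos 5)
Your treatment of \eqref{eqn:Phi-One} coincides with the paper's (chain rule applied to \eqref{eqn:Phi-one}), but for \eqref{eqn:Phi-Two} you take a genuinely different route. The paper invokes the special geometry property of the topological recursion, $\nabla_{\frac{\partial}{\partial\tau_a}}\nabla_{\frac{\partial}{\partial\tau_b}}\Phi=\int_{B_a}\int_{B_b}\omega_{0,3}$, together with the pair-of-pants formula $\omega_{0,3}=-\sum_{\beta}\frac{1}{2h^\beta_1}\theta^\beta_0\theta^\beta_0\theta^\beta_0$ (Example \ref{pants}); this makes it immediate that the second derivative lies in the span of the $\theta^\beta_0$ --- in particular, global regularity at the punctures comes for free --- and only the coefficients require a local residue computation. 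Your moving-pole analysis at $p_\beta$ is correct and reproduces the paper's coefficient exactly: $\partial_{\tau_a}a_\beta=-\left.\frac{\partial H/\partial\tau_a}{\partial H/\partial x}\right|_{p_\beta}$, the promotion of orders under multiplication by $dx=2\zeta_\beta\,d\zeta_\beta$, the automatic residue-freeness at $p_\beta$, and $\tilde c_\beta=\frac12\left.\frac{\partial H/\partial\tau_a}{\partial H/\partial x}\frac{\partial H/\partial\tau_b}{\partial H/\partial x}\right|_{p_\beta}h^\beta_1$ all check out, as does the vanishing of $A$-periods via $\partial_{\tau_a}\partial_{\tau_b}\tau_i=0$ from \eqref{eqn:flat-coordinates}.

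However, the step you yourself flag as the main obstacle --- holomorphy at the punctures --- is where your writeup has a genuine gap. First, a repairable imprecision: at the punctures on $D_2$ and $D_3$ neither $X$ nor $Y$ is a local coordinate; rather $X$ has a pole of some order $k$ there, so a fixed root $X^{-1/k}$ serves as a $q$-independent local coordinate, and only in such a coordinate does ``differentiation at fixed $x$ preserves pole order'' make sense. Second, and substantively: the inversion you invoke does not follow from nondegeneracy alone. If $q\mapsto\mathrm{res}(q)\in\bC^{\fn}$ is a nonlinear map and $L$ is a linear map for which $L(\mathrm{res}(q))$ gives the coordinates $\btau$, it does \emph{not} follow that $\mathrm{res}$ is affine-linear in $\btau$: any nonlinear map admits linear projections that are charts. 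What rescues the claim is a constraint you never state: on each edge $E_i$ the puncture residues of $\Phi$ are (regularized) logarithms of the roots of the edge polynomial, whose vertex coefficients equal $1$, so the product of these roots is a \emph{constant}; hence the residue vector of each edge is confined to a fixed affine subspace of dimension $\fn_i-1$, which equals the number of edge-interior flat coordinates. Only with this dimension count do the $\tau_a$, being restrictions of linear functionals, form affine coordinates on that subspace, making every residue affine-linear in $\btau$ and killing the second derivatives. Once these two points are supplied, your argument closes and is in a sense more elementary than the paper's, using only \eqref{eqn:Phi-One} and the uniqueness underlying Lemma \ref{mero} rather than the recursion machinery; but as written, the puncture step is asserted rather than proved --- which is precisely the difficulty the paper's special-geometry shortcut is designed to bypass.
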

\begin{proof} Equation \eqref{eqn:Phi-One} follows from  Equation \eqref{eqn:Phi-one}. It remains
to prove \eqref{eqn:Phi-Two}.

By special geometry property of the topological recursion (\cite[Theorem 4.4]{EO15},  proved in \cite{EO07}),
$$
(\nabla_{\frac{\partial}{\partial \tau_a}} \nabla_{\frac{\partial}{\partial \tau_b}}\Phi)(p)=\int_{p_1\in B_a}\int_{p_2\in B_b} \omega_{0,3}(p_1,p_2,p),
$$
where (see Example \ref{pants})
$$
\omega_{0,3}(p_1,p_2,p)= \sum_{\beta\in G^*}\frac{-1}{2h^\beta_1} \theta^\beta_0(p_1)\theta^\beta_0(p_2)\theta^\beta_0(p).
$$
So $\displaystyle{ \nabla_{\frac{\partial}{\partial \tau_a}} \nabla_{\frac{\partial}{\partial \tau_b}}\Phi} $ is a linear combination of
$\theta^\beta_0$:
$$
\nabla_{\frac{\partial}{\partial \tau_a}} \nabla_{\frac{\partial}{\partial \tau_b}}\Phi =\sum_{\beta\in G^*} c_\beta \theta^\beta_0,
$$
where the coefficient $c_\beta$ is given by
$c_\beta = \displaystyle{ \Res_{\zeta_\beta\to 0} (\zeta_\beta \cdot \nabla_{\frac{\partial}{\partial \tau_a}} \nabla_{\frac{\partial}{\partial \tau_b}}\Phi) }$.
We have
$$
\nabla_{\frac{\partial}{\partial \tau_a}} \nabla_{\frac{\partial}{\partial \tau_b}} \Phi
=\sum_{c,d} \frac{\partial q_c}{\partial \tau_a}\frac{\partial q_d}{\partial \tau_b}
\nabla_{\frac{\partial}{\partial q_c}} \nabla_{\frac{\partial}{\partial q_d}} \Phi
+\sum_c \frac{\partial^2 q_c}{\partial \tau_a\partial \tau_b}\nabla_{\frac{\partial}{\partial q_c}}\Phi,
$$
where $\nabla_{\frac{\partial}{\partial q_c}}\Phi$ is holomorphic on $\Sigma_q$, and
$$
\nabla_{\frac{\partial}{\partial q_c}} \nabla_{\frac{\partial}{\partial q_d}} \Phi
= \Bigl(-\frac{ \frac{\partial H}{\partial q_a}\frac{\partial H}{\partial q_b}\frac{\partial^2 H}{\partial y^2} }{(\frac{\partial H}{\partial y})^3}
+ \frac{  \frac{\partial^2 H}{\partial q_b\partial y}\frac{\partial H}{\partial q_a}
         +\frac{\partial^2 H}{\partial q_a\partial y}\frac{\partial H}{\partial q_b} }{(\frac{\partial H}{\partial y})^2}\Bigr) dx
$$
So
$$
c_\beta
= \left.\frac{\partial H}{\partial \tau_a}\frac{\partial H}{\partial \tau_b}\right|_{X=X_\beta(q),Y=Y_\beta(q)}
\cdot \Res_{\zeta_\beta\to 0} \Big(\zeta_\beta\frac{-\frac{\partial^2 H}{\partial y^2} }{(\frac{\partial H}{\partial y})^3}dx\Big)
=\frac{1}{2} \left. \frac{ \frac{\partial H}{\partial \tau_a} }{ \frac{\partial H}{\partial x} }
 \frac{ \frac{\partial H}{\partial \tau_b} }{ \frac{\partial H}{\partial x} }\right|_{X=X_\beta(q),Y=Y_\beta(q)}
\cdot h^1_\beta(q).
$$
\end{proof}

Given $\alpha,\beta\in G^*$ and $k,l \in \bZ_{\geq 0}$, define
$B^{\alpha,\beta}_{k,l}$ to be the coefficients of
the expansion of $B(p_1,p_2)$ near $(p_\alpha,p_\beta)\in \bar{\Si}_q\times \bar{\Si}_q$ in
coordinates $\zeta_1= \zeta_\alpha(p_1)$ and $\zeta_2= \zeta_\beta(p_2)$. Then
$$
B(p_1,p_2) =\Big( \frac{\delta_{\alpha,\beta}}{ (\zeta_1-\zeta_2)^2}
+ \sum_{k,l\in \bZ_{\geq 0}} B^{\alpha,\beta}_{k,l} \zeta_1^k \zeta_2^l \Big) d\zeta_1d\zeta_2.
$$
We define
\begin{equation}\label{eqn:BcheckB}
\check{B}^{\alpha,\beta}_{k,l} = \frac{(2k-1)!! (2l-1)!!}{2^{k+l+1}} B^{\alpha,\beta}_{2k,2l}.
\end{equation}

The following lemma is the differential of \cite[Equation (D.4)]{E11}
and holds globally on the compactified mirror curve.
\begin{lemma}\label{xi-recursion}
$$
\theta^\alpha_{k+1} = -d(\frac{\theta^\alpha_k}{dx})- \sum_{\beta\in G^*} \check{B}^{\alpha,\beta}_{k,0}\theta^\beta_0.
$$
\end{lemma}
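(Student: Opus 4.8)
The plan is to prove the identity by showing that the $1$-form
$$
\omega:=\theta^\alpha_{k+1}+d\Big(\frac{\theta^\alpha_k}{dx}\Big)+\sum_{\beta\in G^*}\check{B}^{\alpha,\beta}_{k,0}\,\theta^\beta_0
$$
vanishes identically. By the characterization of the forms $\theta^\gamma_d$, it suffices to check that $\omega$ is a holomorphic $1$-form on $\bar{\Si}_q$ with vanishing $A$-periods; such a form is zero, exactly as in the proof of Lemma \ref{mero}. The vanishing of the $A$-periods is immediate: each $\theta^\gamma_d$ satisfies $\int_{A_i}\theta^\gamma_d=0$, and $d(\theta^\alpha_k/dx)$ is exact, so $\int_{A_i}\omega=0$ for $i=1,\dots,\fg$. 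Hence the entire content of the lemma is the claim that $\omega$ has no poles.

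First I would localize the poles. The forms $\theta^\alpha_{k+1}$ and $\theta^\beta_0$ are differentials of the second kind, holomorphic away from $p_\alpha$ and $p_\beta$ respectively, and in particular at the $\fn$ punctures of $\Si_q\subset\bar{\Si}_q$; moreover $\theta^\alpha_k/dx$ is holomorphic at each puncture, since there $dx=-dX/X$ has a simple pole while $\theta^\alpha_k$ is regular, so $d(\theta^\alpha_k/dx)$ contributes no pole at the punctures. Thus the only candidate poles of $\omega$ sit at the ramification points $\{p_\beta:\beta\in G^*\}$, and it is enough to show $\omega$ is holomorphic at each $p_\beta$.

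The key local input is the Laurent expansion of $\theta^\alpha_k$ at a ramification point, obtained by inserting the expansion of $B(p_1,p_2)$ (which defines the coefficients $B^{\alpha,\beta}_{k,l}$) into the defining residue \eqref{eqn:theta}. In the coordinate $\zeta_\beta$ with $x=a_\beta+\zeta_\beta^2$ this reads
$$
\theta^\alpha_k=-\frac{(2k-1)!!}{2^k}\Big(\frac{\delta_{\alpha,\beta}\,(2k+1)}{\zeta_\beta^{2k+2}}+\sum_{j\geq 0}B^{\beta,\alpha}_{j,2k}\,\zeta_\beta^{j}\Big)d\zeta_\beta,
$$
so $\theta^\alpha_k$ carries a \emph{single} singular term, of order $2k+2$, at $p_\alpha$ and is regular at every other $p_\beta$. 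Dividing by $dx=2\zeta_\beta\,d\zeta_\beta$, using the symmetry $B^{\beta,\alpha}_{0,2k}=B^{\alpha,\beta}_{2k,0}$ and the definition \eqref{eqn:BcheckB} of $\check{B}$, yields the crucial residue identity
$$
\Res_{p\to p_\beta}\frac{\theta^\alpha_k}{dx}=-\check{B}^{\alpha,\beta}_{k,0},\qquad \beta\in G^*.
$$

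I would then match principal parts at each $p_\beta$. For $\beta\neq\alpha$ the form $\theta^\alpha_{k+1}$ is regular, while $\theta^\alpha_k/dx$ has a simple pole of residue $-\check{B}^{\alpha,\beta}_{k,0}$; therefore $d(\theta^\alpha_k/dx)$ has a pure double pole whose principal part is exactly opposite to that of $\check{B}^{\alpha,\beta}_{k,0}\theta^\beta_0$, since $\theta^\beta_0$ has leading coefficient $-1/\zeta_\beta^2$, and the three terms of $\omega$ combine to something regular. At $p_\alpha$ the single-pole structure of $\theta^\alpha_k$ is decisive: it forces $\theta^\alpha_k/dx$ to have exactly the two singular terms $-\tfrac{(2k+1)!!}{2^{k+1}}\zeta_\alpha^{-2k-3}$ and $-\check{B}^{\alpha,\alpha}_{k,0}\,\zeta_\alpha^{-1}$ and nothing of intermediate order, so $d(\theta^\alpha_k/dx)$ has exactly the two pole terms of orders $2k+4$ and $2$. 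The order-$(2k+4)$ term cancels the leading pole $-\tfrac{(2k+3)!!}{2^{k+1}}\zeta_\alpha^{-2k-4}$ of $\theta^\alpha_{k+1}$, and the residual double pole is cancelled by the $\beta=\alpha$ summand $\check{B}^{\alpha,\alpha}_{k,0}\theta^\alpha_0$. Hence $\omega$ is holomorphic at $p_\alpha$ too, and therefore everywhere, giving $\omega=0$.

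The step I expect to be the main obstacle is precisely this bookkeeping at $p_\alpha$: one must verify that $d(\theta^\alpha_k/dx)$ produces no pole of intermediate order between $2$ and $2k+4$. This hinges on the fact, recorded in the expansion above, that $\theta^\alpha_k$ has only a single pole term rather than a full principal part of order $2k+2$, and on the mutual consistency of the sign conventions in \eqref{eqn:theta}, in the prescribed leading coefficient $-(2k+1)!!/(2^k\zeta_\alpha^{2k+2})$ of $\theta^\alpha_k$, and in \eqref{eqn:BcheckB}, so that the two double-pole contributions at each $p_\beta$ cancel rather than reinforce one another.
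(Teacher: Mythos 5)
Your proof is correct and takes essentially the same route as the paper: the paper likewise sets $\Delta\omega = \theta^\alpha_{k+1} + d\bigl(\frac{\theta^\alpha_k}{dx}\bigr) + \sum_{\beta\in G^*}\check{B}^{\alpha,\beta}_{k,0}\theta^\beta_0$, invokes the two-term Laurent expansion of $\frac{\theta^\alpha_k}{dx}$ at each $p_\beta$ (single high-order term at $p_\alpha$ plus a simple pole with residue $-\check{B}^{\alpha,\beta}_{k,0}$) to conclude $\Delta\omega$ is holomorphic, and then uses the vanishing $A$-periods to get $\Delta\omega=0$. Your more detailed derivation of that expansion from the residue definition \eqref{eqn:theta}, and your explicit checks at the punctures and at $p_\alpha$, simply spell out what the paper's proof states without elaboration.
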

\begin{proof} We have the following Laurent series expansion of
the meromorphic function $\displaystyle{ \frac{\theta^\alpha_k}{dx}}$ near  $p_\beta$
in the local coordinate $\zeta_\beta$:
$$
\frac{\theta^\alpha_k}{dx} =\delta_{\alpha,\beta} \frac{-(2d+1)!!}{2^{d+1} \zeta_\beta^{2d+3}}
-\frac{\check{B}^{\alpha,\beta}_{k,0} }{\zeta_\beta} + h(\zeta_\beta),
$$
where $h(\zeta_\beta)$ is a power series in  $\zeta_\beta$. Set
$$
\Delta\omega =
\theta^\alpha_{k+1} + d(\frac{\theta^\alpha_k}{dx}) + \sum_{\beta\in G^*} \check{B}^{\alpha,\beta}_{k,0} \theta^\beta_0.
$$
Then $\Delta\omega$ is a holomorphic 1-form on $\bar{\Si}_q$, and
$$
\int_{A_i}\Delta\omega=0,\quad i=1\ldots,\fg.
$$
So $\Delta \omega =0$.
\end{proof}

Given $\alpha\in G^*$ and $k\in\bZ_{>0}$, we define
\begin{equation}\label{eqn:hxi}
\hxi_{\alpha,k}: = (-1)^k (\frac{d}{dx})^{k-1} (\frac{ \theta^\alpha_0}{dx})
= (X\frac{d}{dX})^{k-1}(X\frac{ \theta^\alpha_0 }{dX}),
\end{equation}
which is a meromorphic function on $\bar{\Si}_q$. We define $d\hxi_{\alpha,0}:= \theta^\alpha_0$.
\begin{lemma}\label{xihxi}
$$
\theta^\alpha_k = d\hxi_{\alpha,k}-
\sum_{i=0}^{k-1} \sum_{\beta\in G^*}\check{B}^{\alpha,\beta}_{k-1-i,0}d\hxi_{\beta,i}.
$$
\end{lemma}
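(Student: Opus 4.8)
The plan is to prove the identity by induction on $k$, using the recursion of Lemma \ref{xi-recursion} together with the elementary fact that dividing $d\hxi_{\beta,i}$ by $dx$ shifts the index by one. The base case $k=0$ is immediate: the sum $\sum_{i=0}^{-1}$ is empty and $d\hxi_{\alpha,0}=\theta^\alpha_0$ holds by definition, so the asserted equality reduces to $\theta^\alpha_0=\theta^\alpha_0$.

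Before running the induction I would record the one auxiliary identity that makes everything go through. From the definition $\hxi_{\alpha,k}=(-1)^k(\frac{d}{dx})^{k-1}(\frac{\theta^\alpha_0}{dx})$ for $k\geq 1$ and $d\hxi_{\alpha,0}=\theta^\alpha_0$, one checks directly that
$$
\frac{d\hxi_{\beta,i}}{dx} = -\hxi_{\beta,i+1}, \qquad i\geq 0,
$$
where $\frac{d\hxi_{\beta,i}}{dx}$ denotes the meromorphic function obtained by dividing the $1$-form $d\hxi_{\beta,i}$ by the meromorphic $1$-form $dx$. The case $i=0$ uses $\hxi_{\beta,1}=-\frac{\theta^\beta_0}{dx}$, while the cases $i\geq 1$ are exactly $\frac{d}{dx}\hxi_{\beta,i}=-\hxi_{\beta,i+1}$, immediate from the definition. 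This is the only place where the explicit formula for $\hxi_{\beta,i}$ is used.

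For the inductive step I assume the formula holds for $\theta^\alpha_k$, divide that identity by $dx$, and apply the auxiliary identity termwise to get
$$
\frac{\theta^\alpha_k}{dx} = -\hxi_{\alpha,k+1} + \sum_{i=0}^{k-1}\sum_{\beta\in G^*} \check{B}^{\alpha,\beta}_{k-1-i,0}\,\hxi_{\beta,i+1}.
$$
Applying $-d(\cdot)$ and reindexing the double sum by $j=i+1$ converts this into $d\hxi_{\alpha,k+1}-\sum_{j=1}^{k}\sum_{\beta}\check{B}^{\alpha,\beta}_{k-j,0}\,d\hxi_{\beta,j}$. I then substitute into Lemma \ref{xi-recursion}, namely $\theta^\alpha_{k+1}=-d(\frac{\theta^\alpha_k}{dx})-\sum_{\beta}\check{B}^{\alpha,\beta}_{k,0}\theta^\beta_0$, and absorb the leftover term, which is the $j=0$ contribution once I write $\theta^\beta_0=d\hxi_{\beta,0}$, into the sum, producing precisely the claimed formula for $\theta^\alpha_{k+1}$.

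The computation is essentially bookkeeping, and I expect no analytic difficulty: every object is a globally defined meromorphic $1$-form or function on the compact curve $\bar{\Si}_q$, and both operations used (division by $dx$ and exterior differentiation) are performed there. The single step requiring care is the index shift combined with matching the lone term $\check{B}^{\alpha,\beta}_{k,0}\theta^\beta_0$ coming from Lemma \ref{xi-recursion} against the $j=0$ term of the shifted double sum, so that the summation ranges close up to $\sum_{j=0}^{k}$ exactly as the statement demands.
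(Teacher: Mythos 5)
Your proposal is correct and follows essentially the same route as the paper: induction on $k$ with base case $d\hxi_{\alpha,0}=\theta^\alpha_0$, substituting the inductive hypothesis into the recursion of Lemma \ref{xi-recursion} and reindexing so that the term $\check{B}^{\alpha,\beta}_{k,0}\theta^\beta_0$ becomes the $j=0$ summand. The only difference is cosmetic: you isolate the identity $\frac{d\hxi_{\beta,i}}{dx}=-\hxi_{\beta,i+1}$ as an explicit auxiliary step, whereas the paper uses the equivalent fact $-d\bigl(\frac{d\hxi_{\beta,i}}{dx}\bigr)=d\hxi_{\beta,i+1}$ inline.
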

\begin{proof} We prove by induction on $k$. When $k=0$, $\theta^\alpha_0=d\hxi_{\alpha,0}$
by definition. Suppose that the lemma holds for $k=d$. By Lemma \ref{xi-recursion},
\begin{eqnarray*}
\theta^\alpha_{d+1} &=&
-d(\frac{ \theta^\alpha_d}{dx})- \sum_{\beta\in G^*} \check{B}^{\alpha,\beta}_{d,0} \theta^\beta_0\\
&=& -d( \frac{d\hxi_{\alpha,d}}{dx} -
\sum_{i=0}^{d-1} \sum_{\beta\in G^*}\check{B}^{\alpha,\beta}_{d-1-i,0}\frac{d\hxi_{\beta,i}}{dx})
- \sum_{\beta\in G^*} \check{B}^{\alpha,\beta}_{d,0}d\hxi_{\beta,0} \\
&=& d\hxi_{\alpha,d+1} - \sum_{i=0}^{d-1} \sum_{\beta\in G^*}\check{B}^{\alpha,\beta}_{d-1-i,0}
d\hxi_{\beta,i+1} - \sum_{\beta\in G^*} \check{B}^{\alpha,\beta}_{d,0}d\hxi_{\beta,0} \\
&=& d\hxi_{\alpha,d+1}-
\sum_{i=0}^d \sum_{\beta\in G^*}\check{B}^{\alpha,\beta}_{d-i,0}d\hxi_{\beta,i}.
\end{eqnarray*}
The second equality follows from  the induction hypothesis. So the lemma holds for $k=d+1$.
\end{proof}

\section{B-model topological string} \label{sec:Bmodel}

\subsection{Laplace transform} \label{sec:laplace}
The Laplace transform of a meromorphic 1-form $\lambda$ along a Lefschetz thimble $\gamma_\alpha$ is given by
$$
\int_{z\in \gamma_\alpha} e^{-ux}\lambda.
$$
\begin{definition}
$$
f^\alpha_\beta (u,q):= \frac{e^{ua_\alpha}}{2\sqrt{\pi u}}\int_{z\in \gamma_\alpha} e^{-ux} \theta^\beta_0.
$$
\end{definition}
It is straightforward to check that
$$
f^\alpha_\beta(u,q)= \delta_{\alpha \beta} +O(u^{-1}).
$$

The Eynard-Orantin invariants $\{ \omega_{g,n}\}$ of the mirror curve
can be expressed as a graph sum involving  $f^\alpha_\beta(u,q)$, $\theta^\beta_d$, and
descendant integrals over moduli spaces of stable curves (\cite{KO}, \cite{E11}, \cite{E14} \cite{DOSS}).
The goal of this subsection and the next two subsections is to relate $f^\alpha_\beta(u,q)$
and $\theta^\beta_d$ to terms in the A-model graph sum. The following is our strategy:
\begin{enumerate}
\item[1.] In this section (Section \ref{sec:laplace}), we relate $f^\alpha_\beta(u,q)$ to the following Laplace transforms:
\begin{equation}\label{eqn:laplace-Phi}
\int_{\gamma_\alpha} e^{-ux} \Phi, \quad
\int_{\gamma_\alpha}e^{-ux} \nabla_{\frac{\partial}{\partial \tau_a}}\Phi,\quad
\int_{\gamma_\alpha}e^{-ux} \nabla_{\frac{\partial}{\partial \tau_a}} \nabla_{\frac{\partial}{\partial \tau_b}}\Phi.
\end{equation}
\item[2.] In Section \ref{sec:oscillatory}, we evaluate the oscillatory integrals
in the Landau-Ginzburg mirror of $\cX=[\bC^3/G]$ for any small $q$.
These oscillatory integrals can be identified with
the Laplace transforms in \eqref{eqn:laplace-Phi} by dimensional reduction.

\item[3.] In Section \ref{sec:inverse}, we expand an antiderivative of $\theta^\beta_0$ near $X=0$ and relate
it to $\txi^\beta_0$ in Section \ref{sec:open-closed}.
\end{enumerate}

By Equation \eqref{eqn:Phi-zero},
$$
\int_{z\in \gamma_\alpha} e^{-ux} \Phi
=  u^{-2}\int_{z\in \gamma_\alpha} e^{-ux} d(\frac{dy}{dx})
= \frac{1}{2} u^{-2}\sum_{\beta\in G^*} h^\beta_1 \int_{z\in \gamma_\alpha} e^{-ux} d\xi_{\beta,0}
= \sqrt{\pi} u^{-3/2} e^{-u a_\alpha} \sum_{\beta\in G^*} h_1^\beta f^\alpha_\beta(u,q).
$$
By Equation \eqref{eqn:Phi-One},
\begin{eqnarray*}
\int_{z\in \gamma_\alpha}e^{-ux} \nabla_{ \frac{\partial}{\partial \tau_a} }\Phi
&=& -u^{-1} \int_{z\in \gamma_\alpha} e^{-ux}
d\bigg(\frac{ \nabla_{ \frac{\partial}{\partial \tau_a} }\Phi }{dx}\bigg)\\
&=& -\frac{1}{2} u^{-1}\sum_{\beta\in G^*}
\left. \frac{\frac{\partial H}{\partial \tau_a}}{ \frac{\partial H}{\partial x}}\right|_{X=X_\beta,Y=Y_\beta} \cdot
h_1^\beta \int_{z\in \gamma_\alpha} e^{-ux} \theta^\beta_0\\
&=& -\sqrt{\pi} u^{-1/2}e^{-u a_\alpha}\sum_{\beta\in G^*}
\left. \frac{ \frac{\partial H}{\partial \tau_a}} { \frac{\partial H}{\partial x} }\right|_{X=X_\beta,Y=Y_\beta} \cdot
h_1^\beta(q) f_\beta^\alpha(u,q).
\end{eqnarray*}
By Equation \eqref{eqn:Phi-Two},
\begin{eqnarray*}
\int_{z\in \gamma_\alpha}e^{-ux} \nabla_{ \frac{\partial}{\partial \tau_b} }
\nabla_{ \frac{\partial}{\partial \tau_a} }\Phi
&=& \frac{1}{2} \sum_{\beta\in G^*}
\left. \frac{\frac{\partial H}{\partial \tau_a}}{ \frac{\partial H}{\partial x}}
\cdot \frac{ \frac{\partial H}{\partial \tau_b} }{\frac{\partial H}{\partial x}} \right|_{X=X_\beta,Y=Y_\beta} \cdot
h_1^\beta \int_{z\in \gamma_\alpha} e^{-ux} \theta^\beta_0\\
&=& \sqrt{\pi} u^{1/2}e^{-u a_\alpha}\sum_{\beta\in G^*}
\left. \frac{\frac{\partial H}{\partial \tau_a}}{ \frac{\partial H}{\partial x}}
\cdot \frac{ \frac{\partial H}{\partial \tau_b} }{\frac{\partial H}{\partial x}} \right|_{X=X_\beta,Y=Y_\beta} \cdot
h_1^\beta(q) f_\beta^\alpha(u,q).
\end{eqnarray*}

In the remainder of this subsection, we consider the limit $q\to 0$. We have
\begin{equation}\label{eqn:h-zero}
\lim_{q\to 0} h_1^\beta(q) = \frac{1}{|G|}\sqrt{\frac{-2}{w_1 w_2 w_3}}
\end{equation}
\begin{equation}\label{eqn:XY-zero}
\lim_{q\to 0} \left.\frac{\frac{\partial H}{\partial \tau_a}}
{ \frac{\partial H}{\partial x}} \right|_{X=X_\beta, Y=Y_\beta}
=\chi_\beta(h_a)\prod_{i=1}^3 w_i^{c_i(h_a)},
\end{equation}
where $h_a\in G$ corresponds to $(m_a,n_a,1)\in \Bsi$,
so that $\age(h_a)=1$.

We introduce some notation. For $h\in G$, let
$$
\nabla_h \Phi = \begin{cases}
\Phi,& h=1\\
\nabla_{ \frac{\partial}{\partial \tau_a}  }\Phi, & h=h_a,\\
\nabla_{ \frac{\partial}{\partial \tau_b} }\nabla_{ \frac{\partial}{\partial \tau_a  }}\Phi, & h=h_a h_b
\textup{ and }\age(h)=2.
\end{cases}
$$
Then
\begin{equation}\label{eqn:omega-f}
\begin{aligned}
\lim_{q\to 0} \int_{z\in \gamma_\alpha} e^{-ux} \nabla_h \Phi
=& \frac{\sqrt{-2\pi} (-1)^{\age(h)} }{|G|} u^{\age(h)-\frac{3}{2}}
\Big(\prod_{i=1}^3 w_i^{c_i(h)-\frac{1}{2}}\Big) \\
&\cdot e^{u(\sqrt{-1}\vartheta_\alpha+\sum_{i=1}^3 w_i \log w_i)}
\sum_{\beta\in G^*} f^\alpha_\beta(u,0)\chi_\beta(h).
\end{aligned}
\end{equation}
We conclude:
\begin{proposition} \label{pro:f-omega}
\begin{eqnarray*}
f^\alpha_\beta(u,0) &=& \frac{1}{\sqrt{-2\pi}}
\exp\Big(-(\sqrt{-1}\vartheta_\alpha +\sum_{i=1}^3 w_i \log w_i)u \Big) \\
&& \cdot \sum_{h\in G}\chi_\beta(h^{-1})
\cdot (-1)^{\age(h)}u^{\frac{3}{2}-\age(h)} \prod_{i=1}^3 w_i^{-c_i(h)+\frac{1}{2}}\cdot\lim_{q\to 0} \int_{\gamma_\alpha} e^{-ux} \nabla_h\Phi.
\end{eqnarray*}
\end{proposition}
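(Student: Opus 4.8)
The plan is to recognize Proposition \ref{pro:f-omega} as nothing more than the inversion of the already-established Equation \eqref{eqn:omega-f}. That equation expresses each Laplace transform $\lim_{q\to 0}\int_{\gamma_\alpha}e^{-ux}\nabla_h\Phi$ as an explicit scalar factor (depending only on $u$, the weights $w_i$, the ages/fractions $c_i(h)$, and the phase $\vartheta_\alpha$) multiplied by the character sum $\sum_{\beta\in G^*}f^\alpha_\beta(u,0)\chi_\beta(h)$. Since $\{\chi_\beta:\beta\in G^*\}$ forms an orthonormal system under the first orthogonality relation, this is a discrete Fourier transform on the finite abelian group $G$, and the proposition is obtained by applying the inverse transform.

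Concretely, I would first divide \eqref{eqn:omega-f} by its explicit prefactor to isolate the character sum:
$$
\sum_{\beta\in G^*}f^\alpha_\beta(u,0)\chi_\beta(h)
=\frac{|G|}{\sqrt{-2\pi}}(-1)^{\age(h)}u^{\frac{3}{2}-\age(h)}
\Bigl(\prod_{i=1}^3 w_i^{\frac{1}{2}-c_i(h)}\Bigr)
e^{-u(\sqrt{-1}\vartheta_\alpha+\sum_{i=1}^3 w_i\log w_i)}
\lim_{q\to 0}\int_{\gamma_\alpha}e^{-ux}\nabla_h\Phi .
$$
Then I would multiply both sides by $\chi_\beta(h^{-1})$, sum over $h\in G$, and invoke the orthogonality relation
$$
\frac{1}{|G|}\sum_{h\in G}\chi_{\beta'}(h^{-1})\chi_\beta(h)=\delta_{\beta,\beta'},
$$
which collapses the left-hand side to $|G|\,f^\alpha_\beta(u,0)$. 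Dividing by $|G|$ cancels the $|G|$ in the prefactor and reproduces exactly the claimed expression, with the exponential factor $\exp\bigl(-(\sqrt{-1}\vartheta_\alpha+\sum_i w_i\log w_i)u\bigr)$ pulled outside the sum over $h$ since it is independent of $h$.

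The only point requiring care — and it is bookkeeping rather than a genuine obstacle — is tracking the several factors that invert in passing from \eqref{eqn:omega-f} to its inverse: the power $u^{\age(h)-3/2}$ becomes $u^{3/2-\age(h)}$, the product $\prod_i w_i^{c_i(h)-1/2}$ becomes $\prod_i w_i^{-c_i(h)+1/2}$, the argument of the exponential changes sign, and the scalar $\sqrt{-2\pi}/|G|$ becomes $|G|/\sqrt{-2\pi}$, whereas the sign $(-1)^{\age(h)}$ is \emph{unchanged} because $(-1)^{-\age(h)}=(-1)^{\age(h)}$ for $\age(h)\in\bZ$. I would also record at the outset that $h\mapsto\nabla_h\Phi$ is well defined for every $h\in G$ according to $\age(h)\in\{0,1,2\}$ (giving $\Phi$, a single $\tau$-derivative, or a double $\tau$-derivative respectively), so the sum over all of $G$ appearing in the final formula is legitimate and the Fourier inversion is carried out over the full group.
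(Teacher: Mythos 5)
Your proposal is correct and is essentially the paper's own argument: the paper states Proposition \ref{pro:f-omega} as an immediate consequence of Equation \eqref{eqn:omega-f} (``We conclude:''), the implicit step being precisely the finite Fourier inversion over $G$ via character orthogonality that you carry out explicitly. Your bookkeeping of the inverted factors --- including $(-1)^{-\age(h)}=(-1)^{\age(h)}$, the cancellation of the two factors of $|G|$, and the well-definedness of $\nabla_h\Phi$ for each $\age(h)\in\{0,1,2\}$ --- is accurate.
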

We will evaluate
$\displaystyle{\lim_{q\to 0} \int_{\gamma_\alpha} e^{-ux} \nabla_h \Phi }$
in the next subsection.

\subsection{Oscillatory integrals} \label{sec:oscillatory}
The equivariant Landau-Ginzburg mirror of a general toric orbifolds
has been studied by Iritani \cite{Ir09}.

The mirror B-model to $\cX=[\bC^3/G]$ is a Landau-Ginzburg model $W_q:(\bC^*)^3\to \bC$, where
\begin{align*}
&W_q=X_1^rX_2^{-s-rf}X_3+X_2^m X_3 +X_3+\sum_{a=1}^p q_a X_1^{m_a} X_2^{n_a-m_af} X_3.
\end{align*}
Define $H=\frac{W}{X_3}$. Following Iritani \cite{Ir09}, the equivariantly perturbed B-model superpotential $\tW_q$ is
$$
\tW_q=W_q- u \log X_1.
$$
We assume $w_1,w_2,u >0$ and $w_3<0$ as usual. Define
$$
t_1 =X_1^rX_2^{-s-rf}X_3, \quad  t_2 = X_2^m X_3, \quad\hat t_1 =X_1^rX_2^{-s-rf}, \quad  \hat t_2= X_2^m.
$$

We use the notation in  Section \ref{sec:thimbles}.
For each critical point $p_\alpha(q)=(a_\alpha(q),b_\alpha(q))$ on the mirror curve, we have
$$
\Im (a_\alpha(0))= -\pi w_3 -\vartheta_\alpha,\quad
\Im (b_\alpha(0))= \frac{\pi}{m} -\varphi_\alpha,\quad
e^{\sqrt{-1} \vartheta_\alpha}=\chi_\alpha(\eta_1),
\quad e^{\sqrt{-1}\varphi_\alpha}=\chi_\alpha(\eta_2).
$$
where $\Im(z)$ is the imaginary part of $z$. Notice that we have a preferred choice of the labeling of branch points by the elements in $G^*$ with
$\vartheta_1=\varphi_1=0$. 
Let $C= -1+\sqrt{-1}\bR\subset \bC^*$, and define (Lagrangian) cycles
$\Gamma^{\mathrm{red}}_{\alpha,q}\subset (\bC^*)^2$ and
$\Gamma_{\alpha,q}\subset (\bC^*)^3$ by
$$
\Gamma^{\mathrm{red}}_{\alpha,q} :=
(\bR^+ e^{-a_\alpha(q)}) \times (\bR^+ e^{-b_\alpha(q)}),\quad
\Gamma_{\alpha,q}:=\Gamma^{\mathrm{red}}_{\alpha,q}\times C \subset (\bC^*)^3.
$$
In the perturbed superpotential $\tW_q$, the
logarithm is defined in the following way: when $q=0$, $X_3<0$, $\Im(\log X_1)= \pi w_3+
\vartheta_\alpha,\ \Im(\log X_2)= -\frac{\pi}{m}+ \varphi_\alpha,\ \Im
(\log X_3)=\pi.$
Since the cycle $\Gamma_{\alpha,q}$ is contractible and deforms
continuously  with respect to $q$, the choice is fixed
by these conditions on $\Gamma_{\alpha,q}$.

Define the oscillatory integral of $\tW_q$ to be
$$
\tI^\cX_\alpha (u)=\int_{\Gamma_{\alpha,q}} e^{-\tW_q} \frac{dX_1}{X_1} \frac{dX_2}{X_2} \frac{dX_3}{X_3}.
$$
Define  $L_\alpha := \{ (q_1,\ldots,q_p)\in \bC^p\mid q_a X_\alpha^{m_a} Y_\alpha^{n_a-m_a f}\in \bR\}$, which is a totally
real linear subspace of $\bC^p$.
We will view $\tI^\cX_\alpha(u)$ as a function of $q\in L_\alpha\cong \bR^p$ and
consider the power series expansion at $q=0$.
\begin{lemma}\label{Lalpha}
If $q \in L_\alpha$ is sufficiently small then
$\Gamma^{\mathrm{red}}_{\alpha,q}=\Gamma^{\mathrm{red}}_{\alpha,0}$.
\end{lemma}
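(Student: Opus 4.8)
The plan is to reduce the statement to the invariance of the \emph{arguments} of the branch-point coordinates. Since $\bR^+z$ depends only on $\arg z$ and, with $a_\alpha(q)=-\log X_\alpha(q)$, $b_\alpha(q)=-\log Y_\alpha(q)$, we have $\Gamma^{\mathrm{red}}_{\alpha,q}=(\bR^+ X_\alpha(q))\times(\bR^+ Y_\alpha(q))$, it suffices to prove that $\arg X_\alpha(q)=\arg X_\alpha(0)$ and $\arg Y_\alpha(q)=\arg Y_\alpha(0)$ for $q\in L_\alpha$ small. I would first recall that $(X_\alpha(q),Y_\alpha(q))$ is the unique solution near $(X_\alpha,Y_\alpha)$ of the branch-point system $H_f=0$, $Y\tfrac{\partial H_f}{\partial Y}=0$, depending holomorphically on $q$ by the implicit function theorem, the branch points being simple for small $q$ by Section \ref{sec:thimbles}.

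The key observation is that at $q=0$ the two leading monomials are real at every branch point. Writing $A=X^rY^{-s-rf}$ and $B=Y^m$, the system becomes $A+B+1=0$ and $(-s-rf)A+mB=0$, whose unique solution is $A=\tfrac{1}{rw_3}$, $B=\tfrac{w_2}{w_3}$, both real and independent of $\alpha$. I would then introduce the multiplicative ansatz $X=X_\alpha\rho_X$, $Y=Y_\alpha\rho_Y$ with $\rho_X,\rho_Y\in\bR^+$. For $q\in L_\alpha$ every monomial of $H_f$ at such a point is real: the terms $A\rho_X^r\rho_Y^{-s-rf}$ and $B\rho_Y^m$ are real by the observation, while $q_aX^{m_a}Y^{n_a-m_af}=(q_aX_\alpha^{m_a}Y_\alpha^{n_a-m_af})\rho_X^{m_a}\rho_Y^{n_a-m_af}$ is real \emph{precisely} because $q\in L_\alpha$. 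Hence, under the identification $L_\alpha\cong\bR^p$, both equations $H_f=0$ and $Y\tfrac{\partial H_f}{\partial Y}=0$ become real-analytic equations $F_1,F_2$ in the real unknowns $(\rho_X,\rho_Y)$, solved at $(\rho_X,\rho_Y,q)=(1,1,0)$.

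Next I would apply the real implicit function theorem. In the $\rho$-coordinates the partial derivatives are the logarithmic derivatives $X\tfrac{\partial}{\partial X}$, $Y\tfrac{\partial}{\partial Y}$ of $H_f$ and of $Y\tfrac{\partial H_f}{\partial Y}$, which are again integer combinations of the monomials and hence real for $q\in L_\alpha$. At the branch point $\partial F_1/\partial\rho_Y=Y\tfrac{\partial H_f}{\partial Y}=0$, so the Jacobian is triangular and its determinant is the product of the diagonal entries $(X\tfrac{\partial H_f}{\partial X})\bigl(Y\tfrac{\partial}{\partial Y}(Y\tfrac{\partial H_f}{\partial Y})\bigr)=\tfrac{1}{w_3}\cdot Y_\alpha^2\tfrac{\partial^2 H_f}{\partial Y^2}$, which is nonzero exactly because the branch point is non-degenerate. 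Thus the $2\times2$ real Jacobian is invertible, so for small $q\in L_\alpha$ there is a unique real $(\rho_X(q),\rho_Y(q))$ near $(1,1)$ solving the system; then $(X_\alpha\rho_X(q),Y_\alpha\rho_Y(q))$ is a branch point near $(X_\alpha,Y_\alpha)$ and, by uniqueness, coincides with $(X_\alpha(q),Y_\alpha(q))$. Consequently $X_\alpha(q)\in\bR^+X_\alpha$ and $Y_\alpha(q)\in\bR^+Y_\alpha$, giving $\Gamma^{\mathrm{red}}_{\alpha,q}=\Gamma^{\mathrm{red}}_{\alpha,0}$.

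I expect the main obstacle to be the reality and non-degeneracy bookkeeping: one must confirm that simplicity of the branch points of $X$ at $q=0$ yields an \emph{invertible real} Jacobian once the equations are restricted to $q\in L_\alpha$, and that the two descriptions---the holomorphic branch point $(X_\alpha(q),Y_\alpha(q))$ and the real IFT solution---genuinely agree. Care with the chosen branches of $\log$ defining $a_\alpha(q),b_\alpha(q)$ is also needed so that arguments, not merely rays, are tracked continuously in $q$; but since the real solution deforms from $\rho_X=\rho_Y=1$ with $\rho_X,\rho_Y>0$, no extra $2\pi$-ambiguity is introduced.
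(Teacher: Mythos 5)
Your proof is correct and takes essentially the same approach as the paper: the paper performs the identical multiplicative rescaling ($X=X_\alpha A$, $Y=Y_\alpha B$, $\epsilon_a=q_aX_\alpha^{m_a}Y_\alpha^{n_a-m_af}$), uses $X_\alpha^rY_\alpha^{-s-rf}=w_1/w_3$ and $Y_\alpha^m=w_2/w_3$ to see that $H_f$ and $Y\tfrac{\partial H_f}{\partial Y}$ become $\bR$-valued real-analytic in $(A,B,\epsilon)\in(\bR^+)^2\times\bR^p$, and applies the real implicit function theorem at $(1,1,0)$ with nonvanishing Jacobian. Your triangular-Jacobian computation and the explicit matching of the real solution with the holomorphic branch point by uniqueness agree in substance with the paper's argument (and make the latter identification, which the paper leaves implicit, explicit).
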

\begin{proof}
Consider the change of variables  $X=X_\alpha A$, $Y=Y_\alpha B$,
and $q_a= X_\alpha^{-m_a} Y_\alpha^{-n_a+m_a f}\epsilon_a$ for
$a=1,\ldots,p$. Then $q=(q_1,\ldots,q_p)\in L_\alpha$ iff $\epsilon =(\epsilon_1,\ldots,\epsilon_p)\in \bR^p$.
We have
$$
\hat{t}_1= \frac{w_1}{w_3} A^r B^{-s-rf},\quad
\hat{t}_2 = \frac{w_2}{w_3} B^m,\quad
H(X,Y,q)= F(A,B,\epsilon),\quad
Y\frac{\partial H}{\partial Y} =G(A,B,\epsilon),
$$
where
\begin{align*}
F(A,B,\epsilon)& = \frac{w_1}{w_3} A^r B^{-s-rf} + \frac{w_2}{w_3} B^m +1 +\sum_{a=1}^p \epsilon_a A^{m_a} B^{n_a-fm_a},\\
G(A,B,\epsilon)&= \frac{|G|w_1w_2}{w_3}(- A^r B^{-s-rf} + B^m) +\sum_{a=1}^p (n_a-fm_a)\epsilon_a A^{m_a} B^{n_a-fm_a}
\end{align*}
are $\bR$-valued real analytic functions on $\bR^+\times \bR^p$. We have
$F(1,1,0)=G(1,1,0)=0$, and the Jacobian
$|\frac{\partial(F,G)}{\partial(A,B)}(1,1,0)| =-|G|^2 w_1w_2 \neq 0$.
By the implicit function theorem, there exist real analytic functions
$c, d:U\to \bR^+$, where $U$ is an open neighborhood of $0$ in $\bR^p$, such that $c(0)=d(0)=1$ and
$F(c(\epsilon),d(\epsilon),\epsilon)=G(c(\epsilon), d(\epsilon),\epsilon)=0$.
If $\epsilon\in U$  then $e^{-a_\alpha(q)}=e^{-a_\alpha(0)}c(\epsilon)$ and $e^{-b_\alpha(q)} = e^{-b_\alpha(0)}d(\epsilon)$,
so $\Gamma^{\mathrm{red}}_{\alpha,q} =\Gamma^{\mathrm{red}}_{\alpha,0}$.
\end{proof}

By Lemma \ref{Lalpha} and its proof, if $q\in L_\alpha$ and
$(X_1,X_2)\in \Gamma^{\mathrm{red}}_{\alpha,q}$ then
$\hat t_1, \hat t_2\in \bR^-$ and $H\in \bR$. (Recall that $w_1,w_2>0$ and $w_3=-w_1-w_2<0$.)
We first integrate out $\hat t_1,\hat t_2$, and then integrate $X_3\in C$. Recall that $h_1,\dots, h_p$ are
age $1$ elements in $G$. Given $\vh= (r_1,\ldots, r_p)\in \bZ^{\oplus p}$,
where $r_a \geq 0$, define
$c_i(\vh)=\sum_{a=1}^p r_a c_i(h_a)\in \bQ$, and define
$\chi_\alpha(\vh)=\chi_\alpha(\prod_{a=1}^p h_a^{r_a})\in U(1)$.

\begin{align*}
\tI_\alpha^\cX(u)=&\int_{\Gamma_{\alpha,q}} \Big(e^{-\sum_{a=1}^p q_a X_1^{m_a}
  X_2^{n_a-m_af} X_3-t_1-t_2-X_3} (-\hat t_1)^{uw_1} (-\hat t_2)^{uw_2}\cdot \\
&\quad e^{u(w_1+w_2+w_3)(\log X_3-\sqrt{-1}\pi)}
  e^{\sqrt{-1}(\vartheta_\alpha+\pi w_3) u}\frac{d(-\hat t_1) d(-\hat t_2)
  dX_3}{|G|(-\hat t_1) (-\hat t_2) X_3}\\
= &\frac{-1}{|G|}e^{\sqrt{-1}(\vartheta_\alpha + \pi w_3) u}
\sum_{\substack{\vh=(r_1,\ldots,r_p) \\r_a\in \bZ_{\geq 0} }}
e^{\sqrt{-1}\pi c_3(\vh)}\chi_\alpha(\vh) \prod_{a=1}^p\frac{(-q_a)^{r_a}}{r_a!}\\
&\cdot \int_{\Gamma_{\alpha,q}} \Big ( (-\hat t_1)^{ c_1(\vh)+uw_1-1}
  (-\hat t_2)^{c_2(\vh)+uw_2-1}
e^{(\log
  X_3-\sqrt{-1}\pi)(c_1(\vh)+c_2(\vh)+c_3(\vh)+u(w_1+w_2+w_3)-1)}\\
& \cdot e^{-\hat
  t_1X_3-\hat t_2X_3-X_3}\Big)  d(-\hat t_1) d(-\hat t_2) dX_3\\
=&\frac{-1}{|G|}e^{\sqrt{-1}(\vartheta_\alpha + \pi w_3) u}
\sum_{\substack{\vh=(r_1,\ldots,r_p)\\ r_a\in \bZ_{\geq 0} }}
e^{\sqrt{-1}\pi c_3(\vh)}\chi_\alpha(\vh)
  \prod_{a=1}^p\frac{(-q_a)^{r_a}}{r_a!}\\
&\cdot \Big(\int_{X_3\in C }
 \big ( \int_{-\hat t_1>0}  e^{-X_3 \hat t_1} (-\hat t_1)^{c_1(\vh)
  +uw_1-1} d(-\hat t_1) \big) \big( \int_{-\hat t_2>0} e^{-\hat t_2}
  (-\hat t_2)^{c_2(\vh)+uw_2-1}  d(-\hat t_2) \big )\\
& \cdot e^{-X_3} e^{(\log X_3-\sqrt{-1}\pi)(c_1(\vh)+c_2(\vh)+
  c_3(\vh)+u(w_1+w_2+w_3)-1)} dX_3 \Big )\\
=&\frac{-1}{|G|}e^{\sqrt{-1}(\vartheta_\alpha + \pi w_3) u}
\sum_{\substack{\vh=(r_1,\ldots,r_p)\\ r_a\in \bZ_{\geq 0} }}
e^{\sqrt{-1}\pi c_3(\vh)}\chi_\alpha(\vh)
  \prod_{a=1}^p\frac{(-q_a)^{r_a}}{r_a!}\Gamma(uw_1+c_1(\vh)) \Gamma(uw_2+c_2(\vh))\\
&\cdot \Big( \int_{X_3\in C}
  e^{-X_3} e^{(\log X_3-\sqrt{-1}\pi)(c_3(\vh)+uw_3-1)} dX_3 \Big )\\
=&\frac{2\pi\sqrt{-1}}{|G|}e^{\sqrt{-1}(\vartheta_\alpha +\pi w_3) u}
\sum_{\substack{ \vh=(r_1,\ldots,r_p)\\ r_a\in\bZ_{\geq 0} } }
e^{\pi\sqrt{-1}c_3(\vh)}
\chi_\alpha(\vh) \prod_{a=1}^p\frac{(-q_a)^{r_a}}{r_a!} \\
 &\cdot \frac{\Gamma(uw_1+ c_1(\vh)) \Gamma(uw_2+ c_2(\vh))}{\Gamma(-uw_3-c_3(\vh)+1)}.
\end{align*}
Here we use the following identity:
\begin{lemma} \label{hankel}  If $\Re z>0$ then
$\displaystyle{ \frac{\sqrt{-1}}{2\pi} (\int_C e^{-z(\log(s)-\sqrt{-1}\pi)}e^{-s} ds)=\frac{1}{\Gamma(z)} }$.
\end{lemma}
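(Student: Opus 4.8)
The plan is to recognize the integral as a Hankel-type contour representation of $1/\Gamma(z)$ and to reduce it, by an elementary reflection $s\mapsto -w$, to the classical vertical-line formula $\frac{1}{2\pi\sqrt{-1}}\int_{c-\sqrt{-1}\infty}^{c+\sqrt{-1}\infty} e^{w}w^{-z}\,dw=\frac{1}{\Gamma(z)}$, valid for $c>0$ and $\Re z>0$, which I will in turn establish by collapsing a Hankel contour onto its branch cut.

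First I would fix the branch of $\log$ on $C=-1+\sqrt{-1}\bR$ by declaring the cut along the positive real axis, so that $\log(-1)=\sqrt{-1}\pi$ and $\arg s\in(\tfrac{\pi}{2},\tfrac{3\pi}{2})$ along $C$; then $e^{-z(\log s-\sqrt{-1}\pi)}=e^{\sqrt{-1}\pi z}s^{-z}$ with $s^{-z}=e^{-z\log s}$. Next I substitute $s=-w$: under this reflection the cut moves to the negative real axis, $\log s=\log w+\sqrt{-1}\pi$ with $\log w$ the principal branch, so that $e^{\sqrt{-1}\pi z}s^{-z}=w^{-z}$ and $e^{-s}=e^{w}$, while $C$ is carried to the vertical line $\Re w=1$ together with an orientation reversal that I track carefully, since it is exactly what is responsible for the precise sign $+1/\Gamma(z)$. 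Thus, up to the prefactor $\tfrac{\sqrt{-1}}{2\pi}$ and the orientation bookkeeping, the lemma reduces to evaluating $\int_{\Re w=1} e^{w}w^{-z}\,dw$.

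I would then deform the line $\Re w=1$ onto the Hankel contour $\mathcal{H}$ that comes in from $-\infty$ below the cut, circles the origin counterclockwise, and returns to $-\infty$ above it. The deformation is legitimate because $|e^{w}|=e^{\Re w}$ decays as $\Re w\to-\infty$, while $|w^{-z}|=|w|^{-\Re z}e^{(\Im z)\arg w}$ decays as $|\Im w|\to\infty$ when $\Re z>0$; concretely, on the horizontal connecting segment at height $\Im w=R$ one bounds $\int |w^{-z}e^{w}|\,|dw|\le R^{-\Re z}e^{\pi|\Im z|}\int_{-\infty}^{1}e^{x}\,dx\to 0$, so the arcs at infinity contribute nothing (for $0<\Re z\le 1$ the line integral is only conditionally convergent, which I would treat by the usual limiting argument). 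Collapsing $\mathcal{H}$ onto the cut and writing $w=re^{\pm\sqrt{-1}\pi}$ on the two sides gives $\int_{\mathcal{H}}e^{w}w^{-z}\,dw=(e^{\sqrt{-1}\pi z}-e^{-\sqrt{-1}\pi z})\Gamma(1-z)=2\sqrt{-1}\sin(\pi z)\Gamma(1-z)$ for $\Re z<1$ (the small circle about $0$ contributes $O(\epsilon^{1-\Re z})\to 0$), and the reflection formula $\Gamma(z)\Gamma(1-z)=\pi/\sin(\pi z)$ turns this into $2\pi\sqrt{-1}/\Gamma(z)$. Since both sides are analytic for $\Re z>0$, the identity extends from $0<\Re z<1$ to all $\Re z>0$.

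Assembling the pieces gives $\int_C e^{-z(\log s-\sqrt{-1}\pi)}e^{-s}\,ds=-2\pi\sqrt{-1}/\Gamma(z)$ once the orientation of $C$ inherited from the Lefschetz thimble $\Gamma_{\alpha,q}$ is accounted for, and multiplying by $\tfrac{\sqrt{-1}}{2\pi}$ yields $1/\Gamma(z)$. I expect the main obstacle to be precisely this sign and phase bookkeeping — pinning down the branch of $\log$ along $C$, the orientation reversal under $s\mapsto -w$, and the orientation of the resulting Hankel contour — rather than any genuine analytic difficulty; the deformation estimates and the analytic continuation in $z$ are routine.
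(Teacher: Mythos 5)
Your proposal is correct, and it rests on the same basic mechanism as the paper's proof --- a contour deformation connecting the vertical line $C=-1+\sqrt{-1}\bR$ to a Hankel loop --- but you run the argument in the opposite logical direction and make it self-contained. The paper simply quotes Hankel's representation of $1/\Gamma(z)$ over a loop $C_\delta$ around the cut $[0,\infty)$ (stated for exactly the integrand of the lemma) and then deforms $C_\delta$ through the truncated contours $C_{a,b}$ to the line $C$, using the bound $|e^{-z(\log t-\sqrt{-1}\pi)}e^{-t}|\leq e^{\pi \Im z}|t|^{-\Re z}e^{-\Re t}$ to show the two horizontal rays contribute nothing as $a,b\to+\infty$. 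You instead substitute $s=-w$ to normalize to the principal branch (your bookkeeping --- cut along $[0,\infty)$, $\arg s\in(\pi/2,3\pi/2)$ on $C$, $\log(-1)=\sqrt{-1}\pi$ --- matches the paper's conventions, and your downward orientation of $C$, which produces the final sign, agrees with the paper's implicit parametrization $X_3=-1-s\sqrt{-1}$ in its Fourier-transform step, a point the paper never makes explicit), reduce the lemma to the classical Bromwich integral $\frac{1}{2\pi\sqrt{-1}}\int_{\Re w=1}e^{w}w^{-z}\,dw=1/\Gamma(z)$, and then \emph{prove} that formula by collapsing a Hankel loop around the negative axis onto its cut, computing $(e^{\sqrt{-1}\pi z}-e^{-\sqrt{-1}\pi z})\Gamma(1-z)=2\pi\sqrt{-1}/\Gamma(z)$ via the reflection formula on the strip $0<\Re z<1$ and extending by analyticity. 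What your route buys is self-containedness --- the paper's proof is only as strong as the representation it cites --- at the cost of two extra ingredients the paper avoids, namely the reflection formula and the analytic continuation in $z$; your estimate on the horizontal connecting segments is essentially identical to the paper's ray estimate, and your treatment of the conditional convergence for $0<\Re z\leq 1$ by truncation mirrors the paper's $a,b\to+\infty$ limiting interpretation of the integral over $C$.
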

\begin{proof} Hankel's representation of the reciprocal Gamma function says
$$
\frac{\sqrt{-1}}{2\pi}(\int_{C_\delta} e^{-z(\log(t)-\sqrt{-1}\pi)}e^{-t} dt)=\frac{1}{\Gamma(z)}
$$
where $C_\delta$ is a Hankel contour (see Figure 2). The integrand is holomorphic on $\bC-[0,\infty)$, and
\begin{equation}\label{eqn:integrand}
|e^{-z(\log(t)-\sqrt{-1}\pi)}e^{-t}| = e^{-\Re(z)\log|s|+\Im(z)(\arg(t)-\pi)-\Re t} \leq e^{\pi \Im z} |t|^{-\Re z} e^{-(\Re z)t},
\end{equation}
so we may deform $C_\delta$ to the contour $C_{a,b}$ (see Figure 2) without changing
the value of the contour integral. Therefore,
\begin{equation}\label{eqn:Cab}
\frac{\sqrt{-1}}{2\pi}(\int_{C_{a,b}} e^{-z(\log(t)-\sqrt{-1}\pi)}e^{-t} dt)=\frac{1}{\Gamma(z)}
\end{equation}
for any $a,b\in (0,\infty)$. The estimate \eqref{eqn:integrand} implies that contributions to the above contour integral from the two horizontal
rays in $C_{a,b}$ tend to zero as $a,b\to +\infty$. The lemma follows from taking the limit of
\eqref{eqn:Cab} as $a,b\to +\infty$.
\end{proof}
\begin{figure}[h]
\begin{center}
\psfrag{C}{$C_\delta$}
\psfrag{Cab}{$C_{ab}$}
\psfrag{-1+ai}{\footnotesize $-1+a\sqrt{-1}$}
\psfrag{-1-bi}{\footnotesize $-1-b\sqrt{-1}$}
\psfrag{0}{\footnotesize $0$}
\includegraphics[scale=0.4]{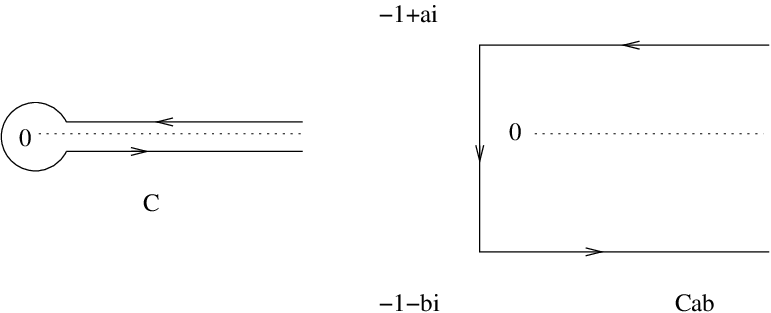}
\end{center}
\caption{A  Hankel contour $C_\delta$ and the contour $C_{a,b}$ ($a,b>0$)}
\end{figure}
\begin{remark}
Let $f(t)$ be the inverse Laplace transform of $F(s)=\displaystyle{ \frac{\Gamma(z)}{s^z} }$, where
$z\in \bC$ is a constant with $\Re z>0$.  Lemma \ref{hankel} implies that $f(t)=t^{z-1}$ for $t>0$.
\end{remark}

By Hori-Iqbal-Vafa \cite{HIV00}, this oscillatory integral could be
reduced to a Laplace transform on the mirror curve. The
Landau-Ginzburg model on $(\bC^*)^3$ is equivalent to a
$5$-dimensional Landau-Ginzburg model, and the $5$d model is again
reduced to a Calabi-Yau threefold without potential. Further dimensional reduction reduces it to the mirror curve.

Introduce two variables $v^+,v^-\in \bC$, and the cycles
$$
\tGamma_{\alpha,q}=\Gamma_{\alpha,q}  \times \{v^+=\overline{v^-}\},\quad
\tGammar=\Gamma^{\mathrm{red}}_{\alpha,q}\times \{v^+=\overline{v^-}\}.
$$
The equation $H(X_1,X_2,q_1,\dots,q_p)=0$ prescribes the mirror curve. Define the holomorphic volume form  $$\Omega=\frac{dX_1}{ X_1}\frac{d X_2}{X_2}\frac{dv^-}{v^-}=dx dy \frac{dv^-}{v^-}.$$

We reduce the oscillatory integral to the mirror curve as
follows.
\begin{align*}
\tI^\cX_\alpha(u)
=&\frac{-1}{2\sqrt{-1}\pi} \int_{\tGamma_{\alpha,q}} e^{-X_3 (H-v^+ v^-)}e^{-ux} \frac{d  X_1}{ X_1}\frac{d  X_2}{X_2} dX_3 dv^+ dv^-\\
=& \int_{\tGammar} \delta(H-v^+ v^-) e^{-ux} \frac{d  X_1}{ X_1}\frac{d  X_2}{X_2} dv^+ dv^-\\
=&-  \int_{\tGammar \cap \{H-v^+ v^-=0\}} e^{-ux} \frac{d  X_1}{ X_1}\frac{d  X_2}{X_2}\frac{dv^-}{v^-}.
\end{align*}
The first identity comes from integrating out $v^+$ and $v^-$ (notice
that on $C$, $\mathrm {Re}(X_3)<0$), using
$$
\int_{v^+=\overline{v^-}}e^{X_3 v^+ v^-}dv^+ dv^-=-\frac{2\pi\sqrt{-1}}{X_3}.
$$
The second identity is the Fourier transform
$$
\int_{C} e^{-X_3(H-v^+v^-)} dX_3=\int_{-\infty}^{+\infty}
e^{(1+s\sqrt{-1} )(H-v^+v^-)}(-\sqrt{-1})ds=-2\pi\sqrt{-1} \delta(H-v^+v^-).
$$
For the third identity, fixing $X_1,X_2$ such that $H\geq 0$, and letting $v^\pm=re^{\pm\sqrt{-1}\theta}$,
$$
\int_{v^+=\overline{v^-}} \delta(H-v^+v^-) dv^+ dv^- = 2\pi \sqrt{-1} \int_{0}^\infty  \delta(H-r^2) 2r dr
=2\pi\sqrt{-1}=-\int_{|v^-|=H} \frac{dv^-}{v^-}.
$$

This integration is further reduced to the mirror curve $H(e^{-x},e^{-y})=0$ as follows.
$$
\tI_{\alpha}^\cX(u)=- \int_{\tGammar\cap \{H-v^+ v^-=0\}} e^{-ux}dx dy  \frac{dv^-}{v^-}\\
= 2\sqrt{-1}\pi\int_{\gamma_\alpha=\tGammar \cap {\{H=v^-=0\}} }  e^{-ux} y dx.
$$
Notice that we use the fact $d(e^{-ux} ydx\frac{dv^-}{v^-})=-e^{-ux} \Omega$ near $\tGammar\cap \{H-v^+ v^-=0\}$. Therefore, we obtain the following formula.
\begin{theorem}
$$
\int_{\gamma_\alpha} e^{-ux}\Phi
= \frac{1}{|G|}e^{\sqrt{-1}(\vartheta_\alpha +\pi w_3) u}
\sum_{\substack{ \vh=(r_1,\ldots,r_p)\\ r_a\in\bZ_{\geq 0} } }
e^{\pi\sqrt{-1}c_3(\vh)}
\chi_\alpha(\vh) \prod_{a=1}^p\frac{(-q_a)^{r_a}}{r_a!}
\cdot \frac{\Gamma(uw_1+ c_1(\vh)) \Gamma(uw_2+ c_2(\vh))}{\Gamma(-uw_3-c_3(\vh)+1)}.
$$
\end{theorem}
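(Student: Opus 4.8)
The plan is to obtain the formula by assembling two ingredients already in hand: the explicit closed-form evaluation of the oscillatory integral $\tI^\cX_\alpha(u)$, and its dimensional reduction to a Laplace transform of $\Phi$ along the thimble $\gamma_\alpha$.

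First I would pin down the coordinate dictionary. On the mirror curve $\Si_q\subset(\bC^*)^2$ we write $X=e^{-x}$, $Y=e^{-y}$, so that $\log Y=-y$ and $\frac{dX}{X}=-dx$; hence
$$
\Phi=\Bigl(\log Y\,\frac{dX}{X}\Bigr)\Big|_{\Si_q}=y\,dx,
$$
and therefore $\int_{\gamma_\alpha}e^{-ux}\Phi=\int_{\gamma_\alpha}e^{-ux}\,y\,dx$, which is exactly the integral produced at the bottom of the Hori--Iqbal--Vafa reduction. Combining with that reduction gives
$$
\tI^\cX_\alpha(u)=2\sqrt{-1}\pi\int_{\gamma_\alpha}e^{-ux}\,y\,dx=2\sqrt{-1}\pi\int_{\gamma_\alpha}e^{-ux}\Phi.
$$

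Next I would simply divide the displayed closed form for $\tI^\cX_\alpha(u)$ by $2\sqrt{-1}\pi$: the prefactor $\frac{2\pi\sqrt{-1}}{|G|}$ collapses to $\frac{1}{|G|}$, while the phase factors and the $\Gamma$-quotient are left untouched, producing precisely the asserted series. The evaluation of $\tI^\cX_\alpha(u)$ itself proceeds by expanding $e^{-\sum_a q_aX_1^{m_a}X_2^{n_a-m_af}X_3}$ as a power series in $q$, integrating $\hat t_1,\hat t_2$ out against their exponential factors to produce $\Gamma(uw_1+c_1(\vh))$ and $\Gamma(uw_2+c_2(\vh))$, and handling the remaining $X_3$-contour integral over $C$ by Hankel's reciprocal-$\Gamma$ formula (Lemma~\ref{hankel}), which supplies the denominator $\Gamma(-uw_3-c_3(\vh)+1)$.

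The hard part is analytic bookkeeping rather than any new idea. I would need to justify interchanging the sum over $\vh$ with the contour integrations for $q\in L_\alpha$ sufficiently small, which rests on Lemma~\ref{Lalpha} guaranteeing that the reduced cycle $\Gamma^{\mathrm{red}}_{\alpha,q}=\Gamma^{\mathrm{red}}_{\alpha,0}$ is fixed along the expansion, so that $\hat t_1,\hat t_2\in\bR^-$ and $H\in\bR$ throughout. Convergence of the two $\Gamma$-integrals and applicability of Lemma~\ref{hankel} require $\Re(uw_1+c_1(\vh))>0$, $\Re(uw_2+c_2(\vh))>0$, and $\Re(-uw_3-c_3(\vh))>0$, which hold for $u>0$ since $w_1,w_2>0$, $w_3<0$, and $c_i(\vh)\ge 0$. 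Finally, one must verify that the cycle $\gamma_\alpha=\tGammar\cap\{H=v^-=0\}$ emerging from the reduction coincides with the Lefschetz thimble of Section~\ref{sec:thimbles}, with the phase factors $e^{\sqrt{-1}\vartheta_\alpha u}$, $e^{\sqrt{-1}\pi w_3 u}$, $e^{\sqrt{-1}\pi c_3(\vh)}$ dictated by the prescribed imaginary parts of $\log X_1,\log X_2,\log X_3$ in the definition of $\tW_q$.
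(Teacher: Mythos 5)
Your proposal coincides with the paper's own proof: the paper likewise evaluates $\tI^\cX_\alpha(u)$ in closed form by expanding the $q$-dependent exponential, integrating out $\hat t_1,\hat t_2$ to produce $\Gamma(uw_1+c_1(\vh))\Gamma(uw_2+c_2(\vh))$, and handling the $X_3$-contour via Lemma \ref{hankel}, then performs the Hori--Iqbal--Vafa dimensional reduction $\tI^\cX_\alpha(u)=2\pi\sqrt{-1}\int_{\gamma_\alpha}e^{-ux}\,y\,dx=2\pi\sqrt{-1}\int_{\gamma_\alpha}e^{-ux}\Phi$ and divides out the prefactor. The supporting points you identify---Lemma \ref{Lalpha} fixing $\Gamma^{\mathrm{red}}_{\alpha,q}$ along $L_\alpha$, the sign/phase conventions for $\log X_i$ on $\Gamma_{\alpha,q}$, and the identification of the reduced cycle with the Lefschetz thimble---are exactly the ingredients the paper uses, so the argument is essentially identical.
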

\begin{corollary} \label{Phi-q-zero}
$$
\lim_{q\to 0} \int_{\gamma_\alpha} e^{-ux} \nabla_h\Phi
= \frac{e^{\sqrt{-1}(\vartheta_\alpha+\pi w_3) u}}{|G|}
(-1)^{\age(h)}  e^{\sqrt{-1}\pi c_3(h)}\chi_\alpha(h)
\frac{\Gamma(uw_1+c_1(h)) \Gamma(uw_2+c_2(h))}{\Gamma(-uw_3-c_3(h)+1)}.
$$
\end{corollary}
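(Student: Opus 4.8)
The plan is to derive Corollary~\ref{Phi-q-zero} directly from the Theorem immediately preceding it, by differentiating its explicit $q$-expansion and reading off coefficients. For $h=1$ we have $\nabla_1\Phi=\Phi$, so I would simply let $q\to 0$ in the Theorem: every term with $\vh\neq\zero$ carries a positive power of $q$, so only the $\vh=\zero$ term survives, and since $c_i(\zero)=0$ and $\chi_\alpha(\zero)=1$ this reproduces the right-hand side for $h=1$ (where $\age(h)=0$, $c_i(h)=0$). For $h=h_a$ and for $h=h_ah_b$ with $\age(h)=2$, where $\nabla_h\Phi$ is a first- resp.\ second-order covariant derivative in the flat coordinates, the argument rests on two facts: (i) the Laplace transform along $\gamma_\alpha$ commutes with the Gauss--Manin covariant derivative, and (ii) the mirror map satisfies $\tau_a=q_a+O(|q|^3)$.

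For (i), I would parametrize $\gamma_\alpha$ by $\zeta_\alpha\in\bR$ (so that $x=a_\alpha(q)+\zeta_\alpha^2$ with the branch point at $\zeta_\alpha=0$ in the interior of the thimble) and differentiate under the integral sign, which is legitimate because $e^{-ux}$ decays exponentially at the two ends $\zeta_\alpha\to\pm\infty$. Writing $\Phi=y\,dx$ and comparing the parametric derivative (holding $\zeta_\alpha$ fixed) with the covariant derivative $\nabla_{\frac{\partial}{\partial q_a}}\Phi=(\partial y/\partial q_a)|_x\,dx$ (holding $x$ fixed), the two differ by $\tfrac{\partial a_\alpha}{\partial q_a}\int_{\gamma_\alpha}d(y\,e^{-ux})$, whose value is the boundary term $[\,y\,e^{-ux}\,]$ at the ends of $\gamma_\alpha$. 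Since $x\to+\infty$ there and $y$ grows only sub-exponentially (the ends approach a puncture where $X\to 0$), this boundary term vanishes, giving
$$
\frac{\partial}{\partial q_a}\int_{\gamma_\alpha}e^{-ux}\Phi = \int_{\gamma_\alpha}e^{-ux}\nabla_{\frac{\partial}{\partial q_a}}\Phi ,
$$
and the same argument applied to $\omega=\nabla_{\frac{\partial}{\partial q_b}}\Phi$ (holomorphic near $\gamma_\alpha$) yields the second-order version. The same commutation can alternatively be seen upstairs in the oscillatory-integral picture of Section~\ref{sec:oscillatory}, where it amounts to differentiating $\int_{\Gamma_{\alpha,q}}e^{-\tW_q}$ under the integral, the contribution of the moving thimble vanishing by the decay of $e^{-\tW_q}$.

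For (ii), inspecting the hypergeometric mirror map \eqref{eqn:mirror-map}, the bracket multiplying $q_a$ has no term linear in $q$: such a term would require $\{c_i(h_b)\}=0$ for all $i$ for some age-one $h_b$, i.e.\ $c_i(h_b)=0$ for all $i$, impossible since $\age(h_b)=1$. Hence $\tau_a=q_a(1+O(|q|^2))=q_a+O(|q|^3)$, so $\partial q_c/\partial\tau_a|_{q=0}=\delta_{ca}$ and $\partial^2 q_c/\partial\tau_a\partial\tau_b|_{q=0}=0$. Using the chain-rule expansion of $\nabla_{\frac{\partial}{\partial\tau}}$ into $\nabla_{\frac{\partial}{\partial q}}$ from the proof of Proposition~\ref{Phi-One-Two}, the second-order mirror-map correction then drops out at $q=0$, so the flat-coordinate covariant derivatives coincide with the $q$-derivatives in the limit. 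Feeding (i) and (ii) into the Theorem, $\partial/\partial q_a|_{q=0}$ selects the term $\vh=\be_a$ (i.e.\ $r_a=1$, $r_b=0$ for $b\neq a$; coefficient $-1$ from $\tfrac{d}{dq_a}(-q_a)$), reproducing the $h=h_a$ formula with sign $(-1)^{\age(h_a)}=-1$; and $\partial^2/\partial q_a\partial q_b|_{q=0}$ selects $\vh=\be_a+\be_b$ (or $\vh=2\be_a$ when $a=b$), where the hypothesis $\age(h_ah_b)=2$ forces $c_i(h_ah_b)=c_i(h_a)+c_i(h_b)$ with no carrying, matching the claimed formula with sign $(-1)^2$.

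The hard part will be the rigorous justification of the commutation in step (i): one must verify both that differentiation under the non-compact thimble integral is valid and that the boundary term at the ends of the Lefschetz thimble genuinely vanishes, which requires controlling the growth of $y$ (and of $\partial y/\partial q_b$) as $X\to 0$ along $\gamma_\alpha$. The rest is bookkeeping: the coefficient extraction from the Theorem and the additivity $c_i(h_ah_b)=c_i(h_a)+c_i(h_b)$ under the age-two hypothesis.
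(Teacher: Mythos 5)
Your proposal is correct and is exactly the derivation the paper intends: the Corollary is stated without proof as an immediate consequence of the preceding Theorem, obtained by differentiating its $q$-expansion in the flat coordinates at $q=0$, using that $\nabla$ commutes with the Laplace transform along $\gamma_\alpha$ (vanishing boundary terms at the ends of the thimble) and that the mirror map \eqref{eqn:mirror-map} gives $\tau_a=q_a+O(|q|^3)$, so first and second $\tau$- and $q$-derivatives agree at the origin. Your coefficient extraction, signs $(-1)^{\age(h)}$, and the no-carry identity $c_i(h_ah_b)=c_i(h_a)+c_i(h_b)$ under the age-two hypothesis all check out, so you have in fact supplied the details the paper leaves implicit.
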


\begin{theorem}\label{thm:f-zero}
$$
f^\alpha_\beta(u,0)=\sum_{h\in G}\frac{\chi_\alpha(h)\chi_\beta(h^{-1}) }{|G|}
\exp\Bigl( \sum_{m\geq 1}\frac{(-1)^{m+1}}{m(m+1)}\sum_{i=1}^3 B_{m+1}(c_i(h))(w_i u)^{-m} \Bigr).
$$
\end{theorem}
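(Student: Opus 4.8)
The plan is to feed the closed-form evaluation of Corollary \ref{Phi-q-zero} into the formula of Proposition \ref{pro:f-omega} and then read off the asymptotic expansion in $1/u$ of the resulting ratio of Gamma functions. First I would substitute
\[
\lim_{q\to 0}\int_{\gamma_\alpha}e^{-ux}\nabla_h\Phi
=\frac{e^{\sqrt{-1}(\vartheta_\alpha+\pi w_3)u}}{|G|}(-1)^{\age(h)}e^{\sqrt{-1}\pi c_3(h)}\chi_\alpha(h)\frac{\Gamma(uw_1+c_1(h))\Gamma(uw_2+c_2(h))}{\Gamma(-uw_3-c_3(h)+1)}
\]
into Proposition \ref{pro:f-omega}. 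In the product the phases $e^{-\sqrt{-1}\vartheta_\alpha u}$ and $e^{\sqrt{-1}\vartheta_\alpha u}$ cancel, the two factors $(-1)^{\age(h)}$ multiply to $1$, and the characters combine to $\chi_\alpha(h)\chi_\beta(h^{-1})$, leaving
\[
f^\alpha_\beta(u,0)=\frac{1}{|G|}\sum_{h\in G}\chi_\alpha(h)\chi_\beta(h^{-1})\,P_h(u)\,\frac{\Gamma(uw_1+c_1(h))\Gamma(uw_2+c_2(h))}{\Gamma(-uw_3-c_3(h)+1)},
\]
where $P_h(u)=\frac{1}{\sqrt{-2\pi}}u^{3/2-\age(h)}\prod_{i=1}^3 w_i^{1/2-c_i(h)}e^{-u\sum_i w_i\log w_i}e^{\sqrt{-1}\pi(w_3u+c_3(h))}$. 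It therefore suffices to prove, term by term in $h$, that $P_h(u)$ times the Gamma ratio equals $\exp\bigl(\sum_{m\geq1}\frac{(-1)^{m+1}}{m(m+1)}\sum_{i=1}^3 B_{m+1}(c_i(h))(w_iu)^{-m}\bigr)$ as an asymptotic series in $1/u$.

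Next I would apply the Stirling expansion $\log\Gamma(z+a)\sim(z+a-\tfrac12)\log z-z+\tfrac12\log(2\pi)+\sum_{m\geq1}\frac{(-1)^{m+1}B_{m+1}(a)}{m(m+1)}z^{-m}$ to each of the three Gamma factors, with $(z,a)=(uw_1,c_1(h))$ and $(uw_2,c_2(h))$ for the numerator and $(z,a)=(-uw_3,\,1-c_3(h))$ for the denominator (recall $w_3<0$, so $-uw_3>0$). For the denominator I would invoke the Bernoulli reflection $B_{m+1}(1-x)=(-1)^{m+1}B_{m+1}(x)$ together with $(-uw_3)^{-m}=(-1)^m(w_3u)^{-m}$; after subtracting $\log\Gamma$ of the denominator these two signs combine to contribute exactly $\frac{(-1)^{m+1}}{m(m+1)}B_{m+1}(c_3(h))(w_3u)^{-m}$ to the exponent. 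Summing the $m$-series over the three factors then reproduces precisely the desired sum $\sum_{m\geq1}\frac{(-1)^{m+1}}{m(m+1)}\sum_i B_{m+1}(c_i(h))(w_iu)^{-m}$.

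It remains to check that the non-series (polynomial and logarithmic) part $\Lambda$ of the expansion cancels $\log P_h(u)$ exactly, so the prefactor contributes $1$. Collecting coefficients of $\log u$ over the three factors gives $u\sum_i w_i+\age(h)-\tfrac32=\age(h)-\tfrac32$ using $\sum_i w_i=0$ and $\age(h)=\sum_i c_i(h)$; the linear-in-$u$ terms sum to $-u(w_1+w_2+w_3)=0$; the $\log(2\pi)$ terms collapse to $\tfrac12\log 2\pi$; and the $\log w_i$ terms collect to $u\sum_i w_i\log w_i+\sum_i(c_i(h)-\tfrac12)\log w_i$. Matching these against the corresponding terms of $\log P_h(u)$ shows every real contribution cancels. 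The main obstacle is the bookkeeping of the $\sqrt{-1}\pi$ phases: since $w_3<0$ one must use the branch $\log(-w_3)=\log w_3-\sqrt{-1}\pi$ in the denominator (dictated by the contour of Section \ref{sec:thimbles} and the oscillatory-integral computation) and $\log(-2\pi)=\log 2\pi+\sqrt{-1}\pi$ in $1/\sqrt{-2\pi}$. The factor $e^{\sqrt{-1}\pi(w_3u+c_3(h))}$ in $P_h(u)$ is precisely designed to absorb the resulting $-\sqrt{-1}\pi(uw_3+c_3(h)-\tfrac12)$ from the denominator together with the $-\tfrac{\sqrt{-1}\pi}{2}$ from $1/\sqrt{-2\pi}$, and a direct check confirms the imaginary parts cancel as well. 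Hence $\log P_h(u)+\Lambda=0$, so $P_h(u)$ times the Gamma ratio equals the claimed exponential, and summing over $h$ completes the proof.
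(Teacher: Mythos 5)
Your proposal is correct and follows essentially the same route as the paper: substitute Corollary \ref{Phi-q-zero} into Proposition \ref{pro:f-omega} (your prefactor $P_h(u)$ is exactly the paper's $C_h$), then apply the Stirling expansion to the ratio $\Gamma(uw_1+c_1(h))\Gamma(uw_2+c_2(h))/\Gamma(-uw_3+1-c_3(h))$ and check that the non-series part cancels $\log P_h(u)$, including the $\sqrt{-1}\pi$ branch bookkeeping via $\log(-w_3u)$ and $\sqrt{-2\pi}$. The Bernoulli reflection $B_{m+1}(1-x)=(-1)^{m+1}B_{m+1}(x)$ together with $(-w_3u)^{-m}=(-1)^m(w_3u)^{-m}$, which you spell out for the denominator, is implicitly built into the paper's displayed Stirling computation, so the two arguments coincide.
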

\begin{proof} By Proposition \ref{pro:f-omega} and Corollary \ref{Phi-q-zero},
$$
f^\alpha_\beta(u,0) = \sum_{h\in G}\frac{\chi_\alpha(h)\chi_\beta(h^{-1}) }{|G|}
 \cdot C_h\cdot \frac{\Gamma(uw_1+c_1(h)) \Gamma(uw_2+c_2(h))}{\Gamma(-uw_3-c_3(h)+1)},
$$
where
$$
C_h=\frac{1}{\sqrt{-2\pi}} \exp\big(-u \sum_{i=1}^3 w_i \log w_i\big)
\cdot u^{\frac{3}{2}-\age(h)} \prod_{i=1}^3 w_i^{-c_i(h)+\frac{1}{2}}
e^{\sqrt{-1}\pi (w_3 u+c_3(h))}.
$$

By Stirling formula \cite{KP11},
\begin{eqnarray*}
&& \log \frac{\Gamma(w_1 u + c_1(h))\Gamma(w_2 u + c_2(h))}{\Gamma(-w_3 u+1-c_3(h))} \\
&=& (w_1 u +c_1(h)-\frac{1}{2})\log(w_1 u) + (w_2u + c_2(h) -\frac{1}{2})\log(w_2 u) \\
&& -(-w_3 u - c_3(h) +\frac{1}{2})\log(-w_3 u)  + \frac{1}{2}\log(2\pi)   \\
&& + \sum_{m\geq 1}\frac{(-1)^{m+1}}{m(m+1)}\sum_{i=1}^3 B_{m+1}(c_i(h))(w_i u)^{-m} \\
&=& \log\sqrt{-2\pi} -\sqrt{-1}\pi(w_3 u+c_3(h)) + (\age(h)-\frac{3}{2})\log u
+ \sum_{i=1}^3 (c_i(h)-\frac{1}{2})\log w_i \\
&& +  u \sum_{i=1}^3 w_i \log w_i
 +   \sum_{m\geq 1}\frac{(-1)^{m+1}}{m(m+1)}\sum_{i=1}^3 B_{m+1}(c_i(h)) (w_i u)^{-m}
\end{eqnarray*}
\begin{eqnarray*}
&& \frac{\Gamma(w_1 u + c_1(h))\Gamma(w_2 u + c_2(h))}{\Gamma(-w_3 u+1-c_3(h))} \\
&=&\sqrt{-2\pi} e^{-\sqrt{-1}\pi(w_3 u +c_3(h))}u^{\age(h)-\frac{3}{2}} \prod_{i=1}^3 w_i^{c_i(h)-\frac{1}{2}} \exp(u\sum_{i=1}^3 w_i\log w_i) \\
&& \cdot \exp\Bigl(\sum_{m\geq 1}\frac{(-1)^{m+1}}{m(m+1)}\sum_{i=1}^3 B_{m+1}(c_i(h)) (w_i u)^{-m} \Bigr)\\
&=& C_h^{-1}\cdot \exp\Bigl(\sum_{m\geq 1}\frac{(-1)^{m+1}}{m(m+1)}\sum_{i=1}^3 B_{m+1}(c_i(h)) (w_i u)^{-m} \Bigr).
\end{eqnarray*}
Therefore,
$$
f^\alpha_\beta(u,0)=\sum_{h\in G}\frac{\chi_\alpha(h)\chi_\beta(h^{-1}) }{|G|}
\exp\Bigl( \sum_{m\geq 1}\frac{(-1)^{m+1}}{m(m+1)}\sum_{i=1}^3 B_{m+1}(c_i(h))(w_i u)^{-m} \Bigr).
$$
\end{proof}

Define
\begin{equation} \label{eqn:h-check}
\check{h}^\alpha(u,q) : =\frac{u^{3/2}}{\sqrt{\pi}} e^{u a_\alpha}\int_{z\in \gamma_\alpha} e^{-ux}\Phi
= \sum_{\beta\in G^*}f^\alpha_\beta(u,q)h^\beta_1.
\end{equation}

\begin{corollary}\label{dilaton-zero}
$$
\check{h}^\alpha(u,0) = \frac{1}{|G|}\sqrt{\frac{-2}{w_1 w_2 w_3}} \exp\Big(\sum_{m\geq 1}\frac{(-1)^{m+1}}{m(m+1)} \sum_{i=1}^3 B_{m+1}(w_i u)^{-m}\Big).
$$
\end{corollary}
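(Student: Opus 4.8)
The plan is to combine the second equality in the definition \eqref{eqn:h-check},
$$
\check{h}^\alpha(u,0) = \sum_{\beta\in G^*} f^\alpha_\beta(u,0)\, h^\beta_1(0),
$$
with the two inputs already computed at $q=0$: the closed formula for $f^\alpha_\beta(u,0)$ from Theorem \ref{thm:f-zero} and the limit $\lim_{q\to 0} h_1^\beta(q) = \tfrac{1}{|G|}\sqrt{-2/(w_1 w_2 w_3)}$ from \eqref{eqn:h-zero}. The crucial observation is that this limit is independent of $\beta$, so the factor $h^\beta_1(0)$ pulls out of the sum and we are reduced to evaluating $\sum_{\beta\in G^*} f^\alpha_\beta(u,0)$.

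First I would substitute the expression from Theorem \ref{thm:f-zero} and interchange the (finite) sums over $\beta\in G^*$ and $h\in G$:
$$
\sum_{\beta\in G^*} f^\alpha_\beta(u,0)
= \sum_{h\in G}\frac{\chi_\alpha(h)}{|G|}\Bigl(\sum_{\beta\in G^*}\chi_\beta(h^{-1})\Bigr)
\exp\Bigl(\sum_{m\geq 1}\frac{(-1)^{m+1}}{m(m+1)}\sum_{i=1}^3 B_{m+1}(c_i(h))(w_i u)^{-m}\Bigr).
$$
The inner sum $\sum_{\beta\in G^*}\chi_\beta(h^{-1})$ is handled by the orthogonality of characters (the second relation stated in Section \ref{sec:Chen-Ruan}, specialized to $h'=1$), which gives $\sum_{\beta\in G^*}\chi_\beta(h^{-1}) = |G|\,\delta_{h,1}$. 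This collapses the sum over $h$ to the single term $h=1$.

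For $h=1$ we have $\chi_\alpha(1)=1$ and $c_i(1)=0$ for $i=1,2,3$, so $B_{m+1}(c_i(1)) = B_{m+1}(0) = B_{m+1}$ is the Bernoulli number. Thus
$$
\sum_{\beta\in G^*} f^\alpha_\beta(u,0)
= \exp\Bigl(\sum_{m\geq 1}\frac{(-1)^{m+1}}{m(m+1)}\sum_{i=1}^3 B_{m+1}(w_i u)^{-m}\Bigr),
$$
and multiplying by the pulled-out constant $\tfrac{1}{|G|}\sqrt{-2/(w_1 w_2 w_3)}$ yields the claimed formula. There is no genuine obstacle here: the result is a formal consequence of Theorem \ref{thm:f-zero} and \eqref{eqn:h-zero}, with the only nontrivial input being the character orthogonality that forces $h=1$; all the analytic work (the Stirling asymptotics and the oscillatory integral evaluation) has already been carried out in establishing those two statements.
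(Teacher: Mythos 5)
Your proposal is correct and matches the paper's proof step for step: the paper likewise pulls the $\beta$-independent constant $\lim_{q\to 0}h_1^\beta=\frac{1}{|G|}\sqrt{-2/(w_1w_2w_3)}$ out of \eqref{eqn:h-check}, substitutes Theorem \ref{thm:f-zero}, and uses the character orthogonality $\sum_{\beta\in G^*}\chi_\beta(h^{-1})=|G|\delta_{h,1}$ to collapse the sum to $h=1$, where $c_i(1)=0$ gives $B_{m+1}(c_i(1))=B_{m+1}$. Nothing is missing.
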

\begin{proof} Recall that
$$
\lim_{q\to 0} h_1^\beta= \frac{1}{|G|}\sqrt{ \frac{-2}{w_1 w_2 w_3} },
$$
for any $\beta\in G^*$, so
$$
\check{h}^\alpha(u,0) = \frac{1}{|G|} \sqrt{\frac{-2}{w_1 w_2 w_3}}\sum_{\beta\in G^*}f^\alpha_\beta(u,0).
$$
By Theorem \ref{thm:f-zero},
$$
\sum_{\beta\in G^*}f^\alpha_\beta(u,0)
= \sum_{h\in G}\frac{\chi_\alpha(h)}{|G|} \Big(\sum_{\beta\in G^*} \chi_\beta(h^{-1})\Big)
\exp\Bigl( \sum_{m\geq 1}\frac{(-1)^{m+1}}{m(m+1)}\sum_{i=1}^3 B_{m+1}(c_i(h))(w_i u)^{-m} \Bigr),
$$
where
$$
\sum_{\beta\in G^*}\chi_\beta(h^{-1}) = \sum_{\beta\in G^*} \chi_\beta(h^{-1})\chi_\beta(1) = |G|\delta_{h^{-1},1}.
$$
So
\begin{eqnarray*}
\check{h}^\alpha(u,0)
&=&\frac{1}{|G|}\sqrt{ \frac{-2}{w_1 w_2 w_3} } \chi_\alpha(1) \exp\Bigl( \sum_{m\geq 1}\frac{(-1)^{m+1}}{m(m+1)}\sum_{i=1}^3 B_{m+1}(c_i(1))(w_i u)^{-m} \Bigr) \\
&=& \frac{1}{|G|}\sqrt{ \frac{-2}{w_1 w_2 w_3} }\exp\Bigl( \sum_{m\geq 1}\frac{(-1)^{m+1}}{m(m+1)}\sum_{i=1}^3 B_{m+1}(w_i u)^{-m} \Bigr).
\end{eqnarray*}
\end{proof}

\subsection{Inverse Laplace transform and expansion at $X=0$} \label{sec:inverse}

Recall that on the Lefschetz thimble $\gamma_\alpha$, the local coordinate is $\zeta_\alpha(q)$, and $x=a_\alpha(q)+\zeta_\alpha(q)^2$. We choose $\zeta_\alpha(q)$ such that when $\zeta_\alpha(0) \to -\infty$, the corresponding point on the mirror curve $\Si_0$ approaches a point with $X=0$ and $Y^m=-1$ (this is consistent with our choice $h^\alpha_1(0)>0$). Furthermore, we require that for any element $\alpha'\in G^*$, its action on the compactified mirror curve $\overline \Si_0$ moves the end point $\zeta_\alpha=-\infty $ on
$\gamma_\alpha$ to the end point $\zeta_{\alpha'\alpha}=-\infty$
on $\gamma_{\alpha'\alpha}$. (Here $\alpha'\alpha \in G^*$
is the product of $\alpha, \alpha'\in G^*$: $\chi_{\alpha'\alpha}(h)=
\chi_{\alpha'}(h)\chi_\alpha(h)$ for all $h\in G$.)

We label the $m$ points with $X=0$ and $Y^m=-1$ on $\overline \Si_0$ by elements
in $\bmu_m^*\cong \bZ_m$ as follows: $\bar{p}_\ell: = (0, e^{\pi\sqrt{-1}\frac{2\ell+1}{m}})$, $\ell\in \bZ_m$.
With our convention, if $\chi_\alpha =\chi_1^j\chi_2^\ell$ then the end point $\zeta_\alpha=-\infty$
is $\bar{p}_{\ell-1}$.

Every 1-form on $\gamma_\alpha$ is exact, so there exists a function $\xi^\alpha_{\beta,0}$
on $\gamma_\alpha$ such that $d\xi^\alpha_{\beta,0}=\theta^\beta_0|_{\gamma_\alpha}$.
The Laplace transform of $\theta^\beta_0$ along $\gamma_\alpha$ is the following
\begin{eqnarray*}
f^\alpha_\beta(u)
&=&  \frac{e^{u a_\alpha}}{2\sqrt{\pi u}} \int_{z \in \gamma_\alpha} e^{-ux(z)}\theta^\beta_0
=\frac{\sqrt{u}}{2\sqrt{\pi}}\int_{z \in \gamma_\alpha} e^{-u(x(z)-a_\alpha)}
\xi^\alpha_{\beta,0} d(x(z)-a_\alpha)\\
&=&  \frac{\sqrt{u}}{2\sqrt{\pi}}\int_{x - a_\alpha\in \bR^+}
e^{-u(x-a_\alpha)}(\xi_{\beta,0}^{\alpha+}(x)-\xi_{\beta,0}^{\alpha-}(x)) d(x-a_\alpha), \\
\end{eqnarray*}
where $\xi_{\beta,0}^{\alpha\pm}(x)=\xi^\alpha_{\beta,0}(x, y(\pm\sqrt{x-a_\alpha}))
\in L^2(0,\infty)$.
From the proof of Theorem \ref{thm:f-zero},
\begin{align*}
f^\alpha_\beta(u,0)=&\frac{e^{-u\sum_{i=1}^3 w_i \log w_i}}{\sqrt{-2\pi}}
\sum_{h\in G}\Big(e^{\sqrt{-1}\pi(w_3 u+ c_3(h))} \cdot \frac{\chi_\alpha(h)\chi_\beta(h^{-1})}{|G|} \\
& \cdot u^{\frac{3}{2}-\age(h)}\cdot \prod_{i=1}^3 w_i^{\frac{1}{2}-c_i(h)}  \frac{\Gamma(w_1 u+ c_1(h))\Gamma(w_2u+c_2(h))}{\Gamma(-w_3u+1-c_3(h))}\Big).
\end{align*}
So the ``classical Laplace transform" is
\begin{align*}
& \int_{x-a_\alpha\in \bR^+}e^{-u(x-a_\alpha)}(\xi^\alpha_{\beta,0}\vert_{q=0}) d(x-a_\alpha)\\
= & -\sqrt{-2}  e^{-u\sum_{i=1}^3 w_i \log w_i} \cdot \sum_{h\in G}
\Big(e^{\sqrt{-1}\pi(w_3 u+c_3(h))}\frac{\chi_\alpha(h)\chi_\beta(h^{-1})}{|G|} \\
& \cdot u^{1-\age(h)}\prod_{i=1}^3 w_i^{\frac{1}{2}-c_i(h)}
\cdot \frac{\Gamma(w_1 u+ c_1(h))\Gamma(w_2u+c_2(h))}{\Gamma(-w_3u+1-c_3(h))}\Big).
\end{align*}

By the inverse Laplace transform formula
\begin{align*}
-\lim_{q\to 0} \xi^{\alpha-}_{\beta,0}=&-\sqrt{-2}
\sum_{\substack{(d_0,h)\in \bZ\times G\\ d_0/r - c_1(h)\in \bZ_{\geq 0}} } \Res_{u=-d_0}(\Gamma(w_1 u+ c_1(h))) \Big( e^{u(x-a_\alpha-\sum_{i=1}^3 w_i \log w_i)} e^{\sqrt{-1}\pi(w_3 u+c_3(h))} \\
&\cdot \frac{\chi_\alpha(h)\chi_\beta(h^{-1})}{|G|}u^{1-\age(h)}\prod_{i=1}^3 w_i^{\frac{1}{2}-c_i(h)} \frac{\Gamma(w_2u+c_2(h))}{\Gamma(-w_3u+1-c_3(h))}\Big)\Big|_{u=-d_0}; \\
\lim_{q\to 0}\xi^{\alpha+}_{\beta,0}=&-\sqrt{-2}
\sum_{\substack{(d_0,h)\in \bZ\times G\\ w_2 d_0 - c_2(h)\in \bZ_{\geq 0}} } \Res_{u=-d_0}(\Gamma(w_2u+c_2(h)))\Big ( e^{u(x-a_\alpha-\sum_{i=1}^3 w_i \log w_i)}  e^{\sqrt{-1}\pi(w_3 u+c_3(h))} \\
&\cdot \frac{\chi_\alpha(h)\chi_\beta(h^{-1})}{|G|}u^{1-\age(h)}\prod_{i=1}^3 w_i^{\frac{1}{2}-c_i(h)} \frac{\Gamma(w_1 u+ c_1(h))}{\Gamma(-w_3u+1-c_3(h))}\Big)\Big|_{u=-d_0}.
\end{align*}
\begin{remark}
We obtain $\xi_{\beta,0}^{\alpha+}$ and $\xi_{\beta,0}^{\alpha-}$ by taking residues around the poles of $\Gamma(w_1 u+c_1(h))$ and $\Gamma(w_2u+c_2(h))$. When both $w_1u+c_1(h)$ and $w_2 u+ c_2(h)$ are non-positive integers, the sum of the respective terms in $\xi^{\alpha+}_{\beta,0}-\xi^{\alpha-}_{\beta,0}$ is the residue.
\end{remark}
We are interested in the expansion of $\xi^\alpha_{\beta,0}$ at $\zeta=-\infty$, i.e. the expansion of $\xi^{\alpha-}_{\beta,0}$ at $X=0$.
\begin{align*}
\lim_{q\to 0}\xi^{\alpha-}_{\beta,0}=& \sqrt{-2}\sum_{\substack{ (d_0,k)\in \bZ_{\geq 0}\times \bZ_m \\ h=h(d_0,k)} }
e^{-d_0 x}\chi_\alpha(\eta_1^{-d_0})e^{-\sqrt{-1}\pi(d_0w_3 - c_3(h))} \\
& \cdot \frac{\chi_\alpha(h)\chi_\beta(h^{-1})}{|G|}(-d_0)^{1-\age(h)}\prod_{i=1}^3 w_i^{\frac{1}{2}-c_i(h)}
\frac{(-1)^{\lfloor \frac{d_0}{r}\rfloor}\Gamma(-d_0 w_2+c_2(h))}{\Gamma(d_0 w_1 -c_1(h)+1)\Gamma(d_0 w_3+1-c_3(h))}\Big)\\
=&\sqrt{-2} \sum_{\substack{ (d_0,k)\in \bZ_{\geq 0}\times \bZ_m \\ h=h(d_0,k)} }
X^{d_0}e^{-\sqrt{-1}\pi(d_0w_3 - c_3(h))}
\frac{\chi_\beta(h^{-1})}{m}(\frac{1}{d_0})^{\age(h)-1}\\
& \cdot \prod_{i=1}^3 w_i^{\frac{1}{2}-c_i(h)}
\frac{\Gamma(d_0(w_1+w_2)+c_3(h))}{\Gamma(d_0 w_1 - c_1(h)+1)\Gamma(d_0 w_2-c_2(h)+1)}\Big)
\chi_\alpha(\eta_2^{-k}).
\end{align*}
This is the expansion of $\lim_{q\to 0}\xi^\alpha_{\beta,0}$ at $X=0$,
$Y=e^{-\sqrt{-1}\pi/m}\chi_\alpha(\eta_2)$.
If $\alpha=\chi_1^j\chi_2^\ell$ then
$$
\chi_\alpha(\eta_2^{-k}) = e^{-2\sqrt{-1}\pi k\ell/m}
$$
depends only on $\ell\in \{0, 1,\ldots,  m-1\}$. We denote
$\xi^{\ell-}_{\beta,0}$ to be the expansion of $\xi^\alpha_{\beta,0}$ at
$\bar{p}_\ell=(0,e^{\pi\sqrt{-1}\frac{2\ell+1}{m}})$ for $\alpha=\chi_1^j\chi_2^{\ell+1}$. Define
$$
\psi_\ell:= \frac{1}{m} \sum_{k=0}^{m-1}\omega_m^{-k\ell} \one'_{\frac{k}{m}}, \quad \ell=0,1, \ldots, m-1,
$$
where $\omega_m= e^{2\pi\sqrt{-1}/m}$. Then $\{\psi_0,\ldots, \psi_{m-1}\}$ is a canonical basis of $H^*_\CR(\cB\bmu_m;\bC)$.
Define
\begin{equation}\label{eqn:xi-beta}
\xi^\beta_0:=\sum_{\ell=0}^{m-1} \xi_{\beta,0}^{\ell-} \psi_\ell,
\end{equation}
which takes values in $H^*_\CR(\cB\bmu_m;\bC)$. Compare with the definition of $\txi^\beta_0$
in Section \ref{sec:open-closed}, we obtain the following identity.
\begin{proposition}\label{xi-txi}
$$
\lim_{q\to 0} \xi^\beta_0=-\frac{1}{|G|}\sqrt{\frac{-2}{w_1w_2w_3}} \txi^\beta_0\big|_{\sv=1}.
$$
\end{proposition}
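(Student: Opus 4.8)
The plan is to prove the identity by directly comparing two explicit $H^*_\CR(\cB\bmu_m;\bC)$-valued power series in $X$: the expansion of $\lim_{q\to 0}\xi^\beta_0$ at $X=0$, assembled from the formulas for $\lim_{q\to 0}\xi^{\alpha-}_{\beta,0}$ obtained above, against the formula for $\txi^\beta_0\big|_{\sv=1}$ coming from the definitions of $\txi^\gamma_0$ and $\Phi^h_0$ in Section~\ref{sec:open-closed}. Since both sides are power series in $X$ valued in $H^*_\CR(\cB\bmu_m;\bC)$, it suffices to match the coefficient of each monomial $X^{d_0}\one'_{\frac{-k}{m}}$.

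First I would substitute $\alpha=\chi_1^j\chi_2^{\ell+1}$ into the expansion of $\lim_{q\to 0}\xi^{\alpha-}_{\beta,0}$, so that the $\alpha$-dependent factor becomes $\chi_\alpha(\eta_2^{-k})=\omega_m^{-k(\ell+1)}$, and then form $\lim_{q\to 0}\xi^\beta_0=\sum_{\ell=0}^{m-1}\xi^{\ell-}_{\beta,0}\psi_\ell$. Using $\psi_\ell=\frac{1}{m}\sum_{k'}\omega_m^{-k'\ell}\one'_{\frac{k'}{m}}$, the sum over $\ell$ collapses by orthogonality of characters of $\bmu_m$: for each fixed $k$,
\begin{align*}
\sum_{\ell=0}^{m-1}\omega_m^{-k(\ell+1)}\psi_\ell
&=\frac{\omega_m^{-k}}{m}\sum_{k'=0}^{m-1}\Big(\sum_{\ell=0}^{m-1}\omega_m^{-(k+k')\ell}\Big)\one'_{\frac{k'}{m}}\\
&=\omega_m^{-k}\one'_{\frac{-k}{m}},
\end{align*}
where the final step selects $k'\equiv-k\pmod m$ and uses the convention $\one'_{\frac{-k}{m}}=\one'_{1-\delta_{0,k}-\frac{k}{m}}$.

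Next I would unwind the right-hand side. Expanding $\txi^\beta_0=|G|\sum_{h\in G}\chi_\beta(h^{-1})\big(\prod_i(w_i\sv)^{1-c_i(h)}\big)\Phi^h_0$ and inserting the formula for $\Phi^h_0$ cancels the $|G|$ against the $1/|G|$ in $\Phi^h_0$ and converts the double sum into a single sum over $(d_0,k)\in\bZ_{\geq 0}\times\bZ_m$. Multiplying by $\sqrt{-2/(w_1w_2w_3)}$ and setting $\sv=1$, the weight prefactor simplifies as $\prod_i w_i^{1-c_i(h)}/\sqrt{w_1w_2w_3}=\prod_i w_i^{1/2-c_i(h)}$, so that $\sqrt{-2/(w_1w_2w_3)}\,\txi^\beta_0\big|_{\sv=1}$ carries exactly the same ratio of Gamma functions, the same $d_0^{1-\age(h)}$, the same $\chi_\beta(h^{-1})/m$, and the same $\prod_i w_i^{1/2-c_i(h)}$ as the left-hand side; the only discrepancy is the phase, which on the right is $e^{\sqrt{-1}\pi(d_0w_3-c_3(h))}$ while on the left is $e^{-\sqrt{-1}\pi(d_0w_3-c_3(h))}\omega_m^{-k}$.

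The main obstacle, and the crux of the argument, is reconciling these two phases. I would compute $c_3(h(d_0,k))$ directly: since $h(d_0,k)=\eta_1^{d_0}\eta_2^{-k}$, the relations $\chi_3(\eta_1)=e^{2\pi\sqrt{-1}w_3}$ and $\chi_3(\eta_2)=e^{-2\pi\sqrt{-1}/m}$ give $c_3(h(d_0,k))\equiv d_0w_3+\tfrac{k}{m}\pmod{\bZ}$, hence $d_0w_3-c_3(h)\equiv-\tfrac{k}{m}\pmod{\bZ}$ and therefore $e^{2\sqrt{-1}\pi(d_0w_3-c_3(h))}=\omega_m^{-k}$. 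This is precisely the identity needed to rewrite $e^{-\sqrt{-1}\pi(d_0w_3-c_3(h))}\omega_m^{-k}$ as $e^{\sqrt{-1}\pi(d_0w_3-c_3(h))}$, after which the coefficients of $X^{d_0}\one'_{\frac{-k}{m}}$ on the two sides coincide term by term and the proposition follows.
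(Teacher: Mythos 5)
Your proof is correct and takes essentially the same route as the paper: the paper derives the expansion of $\lim_{q\to 0}\xi^{\alpha-}_{\beta,0}$ at $X=0$ and then simply asserts that comparison with the definitions of $\txi^\beta_0$ and $\Phi^h_0$ yields the proposition, which is exactly the term-by-term matching (including the collapse of $\sum_\ell \omega_m^{-k(\ell+1)}\psi_\ell=\omega_m^{-k}\one'_{\frac{-k}{m}}$) that you carry out explicitly. Your key phase identity $e^{2\pi\sqrt{-1}(d_0w_3-c_3(h(d_0,k)))}=\omega_m^{-k}$ is precisely the observation the paper records immediately after the definition of the disk function \eqref{eqn:disk-function}, so your reconciliation of the two phases is the intended one.
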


\subsection{Eynard-Orantin topological recursion}\label{sec:TRR}
Let $\omega_{g,n}$ be defined recursively by the Eynard-Orantin topological recursion \cite{EO07}:
$$
\omega_{0,1}=0,\quad  \omega_{0,2}=B(z_1,z_2).
$$
When $2g-2+n>0$,
\begin{eqnarray*}
\omega_{g,n}(p_1,\ldots, p_n) &=& \sum_{\alpha\in G^*}\Res_{p\to p_\alpha}
\frac{\int_{\xi = p}^{\bar{p}} B(p_n,\xi)}{2(\Phi(p)-\Phi(\bar{p}))}
\Big( \omega_{g-1,n+1}(p,\bar{p},p_1,\ldots, p_{n-1}) \\
&&\quad\quad  + \sum_{g_1+g_2=g}
\sum_{ \substack{ I\cup J=\{1,..., n-1\} \\ I\cap J =\emptyset } } \omega_{g_1,|I|+1} (p,p_I)\omega_{g_2,|J|+1}(\bar{p},p_J),
\end{eqnarray*}
where $p,\bar{p}\to p_\alpha$, $X(p)=X(\bar{p})$ and $p\neq \bar{p}$.

The B-model invariants $\omega_{g,n}$ can be expressed as a sum over labeled graphs involving intersection numbers on  moduli spaces of stable curves \cite{KO, E11, E14, DOSS}. We will use the formula stated in \cite[Theorem 3.7]{DOSS},  which is
equivalent to the formula in \cite[Theorem 5.1]{E11}.
To state this graph sum, we introduce some definitions.
Following Eynard \cite{E11}, define Laplace transform of the Bergman kernel $B(z_1,z_2)$
\begin{equation}
\check{B}^{\alpha,\beta}(u,v,q) :=\frac{uv}{u+v} \delta_{\alpha,\beta}
+ \frac{\sqrt{uv}}{2\pi} e^{ua_\alpha + va_\beta}\int_{z_1\in \gamma_\alpha}\int_{z_2\in \gamma_\beta}
B(z_1,z_2) e^{-ux(z_1) -v x(z_2)},
\end{equation}
where $\alpha,\beta\in G^*$. By \cite[Equation (B.9)]{E11},
\begin{equation}\label{eqn:B-ff}
\check{B}^{\alpha,\beta}(u,v,q) = \frac{uv}{u+v}(\delta_{\alpha,\beta}
-\sum_{\gamma\in G^*} f^\alpha_\gamma(u,q)f^\beta_\gamma(v,q)).
\end{equation}
Let $\check{B}^{\alpha,\beta}_{k,l}(q)$ be defined by \eqref{eqn:BcheckB}. Then
$$
\check{B}^{\alpha,\beta}(u,v,q) = \sum_{k,l} \check{B}^{\alpha,\beta}_{k,l}(q) u^{-k} v^{-l}.
$$
Let $\check{h}^\alpha_k(q)$ be defined by
$$
\check{h}^\alpha(u,q) = \sum_k \check{h}^\alpha_k(q) u^{-k},
$$
where $\check{h}^\alpha$ is defined by \eqref{eqn:h-check}.

Given a labeled graph $\vGa=(\Gamma,g,\alpha,k)\in \bGa_{g,0,n}(\BG)$
with $L^O(\Ga)=\{l_1,\ldots,l_n\}$, we define its weight to be
\begin{eqnarray*}
w(\vGa)&=& (-1)^{g(\vGa)-1}\prod_{v\in V(\Gamma)} \Big(\frac{h^\alpha_1}{\sqrt{-2}}\Big)^{2-2g-\val(v)} \langle \prod_{h\in H(v)} \tau_{k(h)}\rangle_{g(v)}
\prod_{e\in E(\Gamma)} \check{B}^{\alpha(v_1(e)),\alpha(v_2(e))}_{k(e),l(e)}(e) \\
&& \cdot \prod_{j=1}^n \frac{1}{\sqrt{-2}}  \theta^{\alpha(l_j)}_{k(l_j)}(z_j)
\prod_{l\in \cL^1(\Gamma)}(- \frac{1}{\sqrt{-2}} )\check{h}^{\alpha(l)}_{k(l)}(l).
\end{eqnarray*}
In our notation \cite[Theorem 3.7]{DOSS} is equivalent to:
\begin{theorem} \label{thm:DOSS}
For $2g-2+n>0$,
$$
\omega_{g,n} = \sum_{\Gamma \in \bGa_{g,0,n}(\BG)}\frac{w(\vGa)}{|\Aut(\vGa)|}.
$$
\end{theorem}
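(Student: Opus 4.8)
The plan is to recognize this statement as the specialization, to the spectral curve $\bar{\Si}_q$, of the general graph-sum formula of Dunin-Barkowski–Orantin–Shadrin–Spitz, which is itself the translation of Givental's quantization formula for semisimple cohomological field theories into the language of the Eynard-Orantin recursion. The real content of a proof is therefore to exhibit the correct dictionary between the analytic data of the mirror curve near its branch points $\{p_\alpha : \alpha\in G^*\}$ and the algebraic input of Givental's formula (an $R$-matrix together with a translation/dilaton shift), and then to check that this input satisfies the hypotheses of \cite[Theorem 3.7]{DOSS}.

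First I would identify the building blocks. A preliminary observation is that $\omega_{g,n}$ is, in each of its arguments, a linear combination of the differentials $\theta^\alpha_d$ of \eqref{eqn:theta}: this is forced by the Eynard-Orantin recursion, whose kernel is assembled from $\theta^\alpha_0$ and whose residues at $p_\alpha$ land in the span of $\{\theta^\alpha_d : d\ge 0\}$. Hence $\omega_{g,n}$ is determined by its expansion coefficients, and it suffices to match these with ancestor correlators of a cohomological field theory. The Laplace transforms $f^\alpha_\beta(u,q)$ of the $\theta^\beta_0$ satisfy $f^\alpha_\beta(u,q)=\delta_{\alpha,\beta}+O(u^{-1})$, so the matrix $(f^\alpha_\beta)$ is a formal gauge transformation with identity leading term and plays the role of the $R$-matrix. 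The essential compatibility is Eynard's identity \eqref{eqn:B-ff},
$$
\check{B}^{\alpha,\beta}(u,v,q) = \frac{uv}{u+v}\Bigl(\delta_{\alpha,\beta} - \sum_{\gamma\in G^*} f^\alpha_\gamma(u,q)\,f^\beta_\gamma(v,q)\Bigr),
$$
which is precisely the symplectic (unitarity) condition on the $R$-matrix; it guarantees that the edge weight $\check{B}^{\alpha,\beta}_{k,l}(q)$ of \eqref{eqn:BcheckB} is the Givental edge propagator. The dilaton data $\check{h}^\alpha(u,q)=\sum_\beta f^\alpha_\beta(u,q)\,h^\beta_1$ of \eqref{eqn:h-check} supplies the translation, and the vertex factors $\langle \prod_{h\in H(v)}\tau_{k(h)}\rangle_{g(v)}$ are the $\psi$-class intersection numbers on $\Mbar_{g(v),\val(v)}$, i.e.\ the correlators of the trivial Airy theory seated at each branch point.

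With the dictionary in place I would prove the formula by induction on $2g-2+n$. The base cases $(g,n)=(0,3)$ and $(1,1)$ are checked by a direct residue computation; for $\omega_{0,3}$ this reproduces the expression $\sum_{\beta\in G^*}\frac{-1}{2h^\beta_1}\,\theta^\beta_0\theta^\beta_0\theta^\beta_0$ recorded in the proof of Proposition \ref{Phi-One-Two}. For the inductive step one inserts the Eynard-Orantin recursion and evaluates the residue at each $p_\alpha$ using the local expansion of the kernel: the polar part contracts against $\check{B}$ to create an edge, while the stable initial terms create a new vertex, so that the residue reorganizes into exactly the combinatorial recursion obeyed by the graph sum $\sum_{\vGa} w(\vGa)/|\Aut(\vGa)|$. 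The identity \eqref{eqn:B-ff} is what makes the two recursions agree term by term.

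The main obstacle is this last residue bookkeeping: showing that the Laplace transform of the recursion kernel near each branch point splits precisely into the vertex, edge, leaf, and dilaton-leaf contributions of a stable graph, with the correct sign $(-1)^{g(\vGa)-1}$ and the normalizing factors $-1/\sqrt{-2}$ and $(h^\alpha_1/\sqrt{-2})^{2-2g-\val(v)}$. Since this is exactly the content of \cite[Theorem 3.7]{DOSS}, the cleanest route is to verify that our data $(f^\alpha_\beta,\ \check{B}^{\alpha,\beta}_{k,l},\ \check{h}^\alpha_k,\ \theta^\alpha_d)$ meets the hypotheses of \emph{loc.\ cit.}\ — most importantly the unitarity relation \eqref{eqn:B-ff} — and then invoke their theorem directly, so that the only remaining task is to confirm that the normalization conventions match.
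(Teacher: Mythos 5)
Your proposal coincides with the paper's treatment: the paper gives no independent proof of Theorem \ref{thm:DOSS}, but simply states that, with the dictionary you describe (the matrix $f^\alpha_\beta(u,q)$ playing the role of the $R$-matrix, the edge weights $\check{B}^{\alpha,\beta}_{k,l}$ via Eynard's identity \eqref{eqn:B-ff}, the dilaton data $\check{h}^\alpha$, and the differentials $\theta^\alpha_d$), the result is \cite[Theorem 3.7]{DOSS} rewritten in the paper's notation. Your sketched residue induction is a reasonable outline of how that cited theorem is proved, but since you ultimately invoke \cite[Theorem 3.7]{DOSS} directly after matching conventions, your route is essentially the same as the paper's.
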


\begin{example}[pair of pants]\label{pants}
$$
\omega_{0,3}(p_1,p_2,p_3) =  \sum_{\alpha\in G^*} \frac{1}{2h^\alpha_1} \theta^\alpha_0(p_1)\theta^\alpha_0(p_2)\theta^\alpha_0(p_3).
$$
\end{example}

We now consider the unstable case $(g,n)=(0,2)$.
Recall that $dx =-\frac{dX}{X}$ is a meromorphic 1-form on $\bar{\Si}_q$, and
$\frac{d}{dx}=-X\frac{d}{dX}$ is a meromorphic vector field on $\bar{\Si}_q$.
Define
\begin{equation}\label{eqn:C}
C(z_1,z_2):=
(-\frac{\partial}{\partial x}(z_1)-\frac{\partial}{\partial x}(z_2))\Big(\frac{\omega_{0,2}}{dx_1 dx_2}\Big)(z_1,z_2)
d(x(z_1))(dx(z_2)).
\end{equation}
Then $C(z_1,z_2)$ is meromorphic on $(\bar{\Si}_q)^2$ and holomorphic
on $(\bar{\Si}_q \setminus \{p_\alpha:\alpha\in G^*\})^2$. It only has
double poles on $\{p_\alpha:\alpha\in G^*\}\times \bar \Si_q$ and
$\bar \Si_q \times \{p_\beta:\beta\in G^*\}$.
\begin{lemma}\label{C}
$$
C(z_1,z_2)= \frac{1}{2} \sum_{\gamma\in G^*} \theta^\gamma_0(z_1) \theta^\gamma_0(z_2).
$$
\end{lemma}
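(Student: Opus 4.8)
The plan is to fix a generic point $z_2\in\bar{\Si}_q$ (not a branch point) and prove the stated identity as an equality of meromorphic $1$-forms in the variable $z_1$; since both sides are symmetric bidifferentials depending holomorphically on $z_2$, this suffices. Write $G(z_1,z_2):=\omega_{0,2}/(dx_1\,dx_2)=B(z_1,z_2)/(dx(z_1)\,dx(z_2))$, a meromorphic function on $\bar{\Si}_q\times\bar{\Si}_q$, so that by \eqref{eqn:C} we have $C=-\big(\tfrac{\partial}{\partial x_1}+\tfrac{\partial}{\partial x_2}\big)G\cdot dx_1\,dx_2$. The strategy is the one already used for Lemma \ref{mero}: I will show that $C(\cdot,z_2)$ and $\tfrac12\sum_{\gamma\in G^*}\theta^\gamma_0(z_2)\,\theta^\gamma_0(z_1)$ are both differentials of the second kind in $z_1$ with poles only at the branch points $\{p_\alpha:\alpha\in G^*\}$, that their principal parts agree, and that both have vanishing $A$-periods; their difference is then a holomorphic $1$-form with zero $A$-periods, hence zero.

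First I would locate the poles of $C(\cdot,z_2)$. Near the diagonal I may use $x$ as a local coordinate (valid away from the branch points), where $B=\big((x_1-x_2)^{-2}+f(x_1,x_2)\big)dx_1\,dx_2$ with $f$ holomorphic; since $\big(\tfrac{\partial}{\partial x_1}+\tfrac{\partial}{\partial x_2}\big)(x_1-x_2)^{-2}=0$, the double pole cancels and $C$ is regular on $z_1=z_2$. At a puncture $-\tfrac{\partial}{\partial x}=X\tfrac{d}{dX}$ is a regular vector field vanishing at $X=0$, while $B$ is holomorphic there and $dx_1$ has a simple pole; a short local check shows the simple pole of $dx_1$ is cancelled, so $C$ is regular at the punctures. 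At a branch point $p_\alpha$ I use the coordinate $\zeta_1=\zeta_\alpha(z_1)$ with $x_1=a_\alpha+\zeta_1^2$, so $\tfrac{\partial}{\partial x_1}=(2\zeta_1)^{-1}\tfrac{\partial}{\partial\zeta_1}$. Expanding $B(z_1,z_2)=\big(\sum_{k\ge0}c_k(z_2)\zeta_1^k\big)d\zeta_1$, the residue characterization \eqref{eqn:theta} identifies the leading coefficient $c_0(z_2)=\theta^\alpha_0(z_2)$. Substituting into the formula for $C$ yields $C(z_1,z_2)=\big(\tfrac{c_0(z_2)}{2\zeta_1^2}+O(1)\big)d\zeta_1$; in particular $C$ has a double pole with vanishing residue at $p_\alpha$, and its principal part is precisely $\tfrac12\theta^\alpha_0(z_2)$ times the principal part of $\theta^\alpha_0(z_1)$, matching the right-hand side (the terms $\gamma\ne\alpha$ being regular in $z_1$ at $p_\alpha$).

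Next I would check the $A$-periods. Splitting $C=-\tfrac{\partial G}{\partial x_1}dx_1\,dx_2-\tfrac{\partial G}{\partial x_2}dx_1\,dx_2$, the first piece equals $-(d_1G)\,dx_2$, where $d_1$ is the exterior derivative in $z_1$ and $G(\cdot,z_2)$ is a single-valued meromorphic function; hence $\int_{z_1\in A_i}(d_1G)=0$ for a representative of $A_i$ avoiding the poles of $G$. For the second piece, $\int_{z_1\in A_i}\tfrac{\partial G}{\partial x_2}\,dx_1=\tfrac{\partial}{\partial x_2}\int_{z_1\in A_i}G\,dx_1=\tfrac{\partial}{\partial x_2}\big((dx_2)^{-1}\int_{z_1\in A_i}B(z_1,z_2)\big)=0$ by property $(3)$ of $B$ in Section \ref{sec:second}. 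The right-hand side likewise has vanishing $A$-periods because $\int_{A_i}\theta^\gamma_0=0$ for all $\gamma$. Thus $C(\cdot,z_2)-\tfrac12\sum_{\gamma\in G^*}\theta^\gamma_0(z_2)\,\theta^\gamma_0(\cdot)$ is a holomorphic $1$-form on $\bar{\Si}_q$ with all $A$-periods zero, so it vanishes identically; letting $z_2$ vary and using holomorphic dependence on $z_2$ gives the bidifferential identity.

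The step I expect to require the most care is the local analysis at the branch points: because $\zeta_\alpha=\sqrt{x-a_\alpha}$, the operator $\tfrac{\partial}{\partial x_1}=(2\zeta_1)^{-1}\tfrac{\partial}{\partial\zeta_1}$ raises pole orders and could a priori generate a spurious $\zeta_1^{-1}$ residue, so one must carry the $\zeta_1$-expansion far enough to confirm that the residue vanishes and that the double-pole coefficient is exactly $\tfrac12\theta^\alpha_0(z_2)$ with the normalization fixed in \eqref{eqn:theta}. By contrast, the cancellation of the diagonal singularity and the period computation are comparatively routine.
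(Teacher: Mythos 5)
Your proof is correct, but it takes a genuinely different route from the paper's. The paper proves Lemma \ref{C} by comparing Laplace transforms: integrating both sides against $e^{-u(x(z_1)-a_\alpha)-v(x(z_2)-a_\beta)}$ over $\gamma_\alpha\times\gamma_\beta$, the operator $-(\tfrac{\partial}{\partial x_1}+\tfrac{\partial}{\partial x_2})$ becomes the factor $(u+v)$, and Eynard's identity \eqref{eqn:B-ff}, $\check{B}^{\alpha,\beta}(u,v,q)=\tfrac{uv}{u+v}\big(\delta_{\alpha,\beta}-\sum_{\gamma} f^\alpha_\gamma(u,q)f^\beta_\gamma(v,q)\big)$, together with $f^\alpha_\beta(u,q)=\tfrac{e^{ua_\alpha}}{2\sqrt{\pi u}}\int_{\gamma_\alpha}e^{-ux}\theta^\beta_0$, shows the transforms of the two sides agree; the paper then argues that the difference $\omega$ can have no poles (a nonzero double-pole coefficient at some $p_\beta$ would force a nonvanishing Laplace transform) and kills $\omega$ by the same vanishing-$A$-period argument you use at the end. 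You instead compute principal parts directly, and your key local claim checks out: writing $x_1=a_\alpha+\zeta_1^2$, $B=\big(\sum_{k\ge0}c_k(z_2)\zeta_1^k\big)d\zeta_1$ and $g_k=c_k/dx_2$, one finds $C=\big(-\tfrac12\sum_k(k-1)g_k\zeta_1^{k-2}-\sum_k\partial_{x_2}g_k\,\zeta_1^{k}\big)d\zeta_1\,dx_2$, so the factor $(k-1)$ annihilates the potential $\zeta_1^{-1}$ residue at $k=1$ and leaves exactly $\tfrac{c_0(z_2)}{2\zeta_1^2}\,d\zeta_1$, with $c_0(z_2)=\theta^\alpha_0(z_2)$ by \eqref{eqn:theta} at $d=0$; your diagonal and puncture checks and the $A$-period computation via property (3) of $B$ are likewise correct. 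What your route buys is elementarity and self-containedness: it bypasses \eqref{eqn:B-ff} and the somewhat delicate step ``vanishing Laplace transform implies no pole,'' and it actually proves the pole structure of $C$ that the paper asserts without argument right after \eqref{eqn:C}; what the paper's route buys is a direct link between the lemma and the kernels $f^\alpha_\beta$, $\check{B}^{\alpha,\beta}_{k,l}$ that drive the subsequent graph-sum comparison. One incidental remark: your principal-part matching is insensitive to the overall sign normalization of $\theta^\gamma_0$ (the definition \eqref{eqn:theta} yields leading term $+\zeta_\alpha^{-2}d\zeta_\alpha$, whereas the paper's listed property (2) states $-\zeta_\alpha^{-2}d\zeta_\alpha$), because the right-hand side of the lemma is quadratic in the $\theta^\gamma_0$'s; under either consistent convention the singular part of $C(\cdot,z_2)$ at $p_\alpha$ is $\tfrac12\theta^\alpha_0(z_2)$ times the principal part of $\theta^\alpha_0(z_1)$, exactly as you claim.
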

\begin{proof}
First we show that the Laplace transforms of both sides are
asymptotically equal. For any $\alpha,\beta\in G^*$,
\begin{eqnarray*}
&& \int_{z_1\in \gamma_\alpha}\int_{z_2\in \gamma_\beta}
e^{-u(x(z_1)-a_\alpha)-v(x(z_2)-a_\beta)} C(z_1, z_2)\\
&=& (-u-v)\int_{z_1\in \gamma_\alpha}\int_{z_2\in \gamma_\beta}
e^{-u(x(z_1)-a_\alpha)-v(x(z_2)-a_\beta)}\omega_{0,2}\\
&\sim& 2\pi \sqrt{uv} \sum_{\gamma\in G^*}f^\alpha_\gamma(u,q)f^\beta_\gamma(v,q)\\
&\sim &\frac{1}{2}\int_{z_1\in \gamma_\alpha}\int_{z_2\in \gamma_\beta}
e^{-u(x(z_1)-a_\alpha)-v(x(z_2)-a_\beta)} \theta^\gamma_0(z_1) \theta^\gamma_0(z_2).
\end{eqnarray*}
Define their difference
$$
\omega=C(z_1,z_2)-\frac{1}{2} \sum_{\gamma\in G^*} \theta^\gamma_0(z_1) \theta^\gamma_0(z_2),
$$
then its Laplace transform at $\gamma_\alpha\times \gamma_\beta$ for
any $\alpha,\beta\in G^*$ vanishes. For $i=1,\dots,\fg$,
$$\int_{p_2\in A_i} \omega_{0,2}(p_1,p_2)=0,
\ \int_{A_i}
\theta^\alpha_0=0.$$
Then
$$
\int_{p_2\in A_i}\omega(p_1,p_2) =0,
$$ and the
following residue $1$-form has
$$
\int_{p_2\in A_i} \Res_{p_1\to p_{\alpha}} \zeta_{1,\alpha}(p_1) \omega(p_1,p_2)=0,
$$
for all $i=1,\dots, \fg$. Notice that the $1$-form $$\Res_{p_1\to
  p_\alpha}
\zeta_{1,\alpha}(p_1)\omega(p_1,p_2)=[u^0] \frac{-1}{2\sqrt{\pi u}}\int_{p_1\in
  \gamma_\alpha} e^{-u(x(p_1)-a_\alpha)}
  \omega(p_1,p_2) $$
is a well-defined holomorphic form on $\bar \Si_q$. It has no poles, otherwise
any possible double pole at $p_\beta$ implies non-zero Laplace
transform of $\omega$ at $\gamma_\alpha \times \gamma_{\beta}$. It
follows from the vanishing A-cycles integrals that $$\Res_{p_1\to
  p_\alpha} \zeta_{1,\alpha}(p_1) \omega(p_1,p_2)=0$$ for any
$\alpha\in G^*$, and then $\omega$ does
not have any poles since it is symmetric and could only have double
poles at the ramification points. Therefore by the vanishing A-periods of $\omega$
we know $\omega=0$.
\end{proof}

\subsection{B-model potentials}
Choose $\delta>0$, $\epsilon>0$ sufficiently small, such that
for $|q|<\epsilon$, the meromorphic function
$X: \bar{\Si}_q \to \bC\cup \{\infty\}$
restricts to an isomorphism
$$
X_q^\ell: D_q^\ell\to D_\delta=\{ X\in \bC: |X|<\delta\},
$$
where $D_q^\ell$ is an open neighborhood of $\bar{p}_\ell\in X^{-1}(0)$, $\ell=0,\ldots,m-1$.
Define
$$
\rho_q^{\ell_1,\ldots,\ell_n}:= (X_q^{\ell_1})^{-1}\times \cdots \times (X_q^{\ell_n})^{-1}:
(D_\delta)^n\to D_q^{\ell_1}\times \cdots \times D_q^{\ell_n} \subset (\bar{\Si}_q)^n.
$$

\begin{enumerate}
\item (disk invariants)
At $q=0$, $Y(\bar{p}_\ell)^m=-1$ for $\ell=0,\ldots,m-1$. When
$\epsilon$ and $\delta$ are sufficiently small,
$Y(\rho_q^\ell(X))\in \bC\setminus[0,\infty)$. Choose a branch of logarithm
$\log:\bC\setminus [0,\infty)\to (0,2\pi)$, and define
$$
y_q^\ell (X) = -\log Y (\rho_q^\ell(X)).
$$
The function $y_q^\ell(X)$ depends on the choice of logarithm, but
$y_q^\ell(X)-y_q^\ell(0)$ does not. $dx = -dX/X$ is a meromorphic
1-form on $\bC$ with a simple pole at $X=0$, and
$$
(y_q^\ell(X)-y_q^\ell(0))dx
$$
is a holomorphic 1-form on $D_\delta$.
Define the {\em B-model disk potential} by
$$
\check{F}_{0,1}(q;X):= \sum_{\ell\in \bZ_m}\int_0^X (y_q^\ell(X')-y_q^\ell(0))(-\frac{dX'}{X'}) \cdot \psi_\ell,
$$
which takes values in $H^*(\cB\bmu_m;\bC)$.

\item (annulus invariants)
$$
(\rho_q^{\ell_1,\ell_2})^*\omega_{0,2}-\frac{dX_1dX_2}{(X_1-X_2)^2}
$$
is holomorphic on $D_\delta\times D_\delta$.  Define the {\em B-model annulus potential} by
$$
\check{F}_{0,2}(q;X_1,X_2):=
\sum_{\ell_1,\ell_2\in \bZ_m}
\int_0^{X_1}\int_0^{X_2} \Big((\rho_q^{\ell_1,\ell_2})^*\omega_{0,2}-\frac{dX_1' dX_2'}{(X_1'-X_2')^2}\Big) \cdot
\psi_{\ell_1}\otimes\psi_{\ell_2},
$$
which takes values in $H^*(\cB\bmu_m;\bC)^{\otimes 2}$.
\item For $2g-2+n>0$, $(\rho_{\btau}^{\ell_1,\ldots,\ell_n})^*\omega_{g,n}$ is
holomorphic on $(D_\delta)^n$. Define
$$
\check{F}_{g,n}(q;X_1,\ldots,X_n):=
\sum_{\ell_1,\ldots,\ell_n\in \bZ_m}
\int_0^{X_1}\cdots\int_0^{X_n} (\rho_q^{\ell_1,\ldots,\ell_n})^*\omega_{g,n} \cdot
\psi_{\ell_1}\otimes\cdots \otimes\psi_{\ell_n},
$$
which takes values in $H^*(\cB\bmu_m;\bC)^{\otimes n}$.
\end{enumerate}
For $g\in\bZ_{\geq 0}$ and $n\in \bZ_{>0}$,
$\check{F}_{g,n}(q;X_1,\ldots,X_n)$ is holomorphic on $B_\epsilon \times (D_\delta)^n$ when
$\epsilon, \delta>0$ are sufficiently small. By construction,
the power series expansion of  $\check{F}_{g,n}(q;X_1,\ldots,X_n)$ only involves
positive powers of $X_i$.

For $\beta\in G^*$, let $\xi_{\beta,0}^{\ell-}$ be defined as in Section \ref{sec:inverse}. Then
$$
\xi_{\beta,0}^{\ell-}(X)=\int_0^X(\rho_q^\ell)^*\theta^\beta_0.
$$

\subsection{Special geometry and B-model graph sum}\label{sec:Bgraph}

By the special geometry property of the topological
recursion (\cite[Theorem 4.4]{EO15},  proved in \cite{EO07}):
$$
\nabla_{\frac{\partial}{\partial \tau_a}} \omega_{g,n}(z_1,\cdots,z_n) =
\int_{z_{n+1}\in B_a}\omega_{g,n+1}(z_1,\cdots,z_{n+1}),
$$
where $B_a$ is defined as in Section \ref{sec:third}. At $q=0$,
\begin{eqnarray*}
\int_{z_{n+1}\in B_a }\theta^\alpha_k(z_{n+1})
&=& [u^{-k}]\int_{z_{n+1}\in B_a}(-\frac{\sqrt{u} e^{u a_\alpha}}{\sqrt{\pi}}) \int_{z'\in \gamma_\alpha}B(z_{n+1},z') e^{-ux(z')}\\
&=& [u^{-k}](-\frac{\sqrt{u} e^{u a_\alpha}}{\sqrt{\pi}})
\int_{z'\in \gamma_\alpha}(\nabla_{\frac{\partial}{\partial \tau_a}} \Phi)(z') e^{-ux(z')} \\
&=& [u^{-k}]\frac{1}{|G|} \sqrt{\frac{-2}{w_1 w_2 w_3}}\prod_{i=1}^{3}w_i^{c_i(h_a)}\sum_{\beta \in G^*}f^\alpha_\beta(u)\chi_\beta(h_a).\\
\end{eqnarray*}

Define the B-model primary leaf to be
\begin{equation}\label{eqn:primaryB}
(\check{h}^{\btau})^{\alpha}_{k}= [u^{-k}]\Bigl(\frac{1}{|G|}\sqrt{ \frac{-2}{w_1 w_2 w_3} }\sum_{\beta\in G^*}
\sum_{a=1}^p \prod_{i=1}^{3}w_i^{c_i(h_a)}f_\beta^\alpha(u,0)\chi_\beta(h_a)\tau_a\Bigr).
\end{equation}

For any $\alpha\in G^*$, $k\in\bZ_{\geq 0}$, $\ell\in \bZ_m$, $(\rho^\ell_0)^*\theta^\alpha_k$
is a holomorphic 1-form on $D_\delta$ for small enough $\delta>0$. To each open leaf
$l_j$ of a labeled graph $\vGa\in \Gamma_n(\cX)$, we assign
\begin{equation}\label{eqn:openB}
(\cL^{\xi})^\alpha_k(l_j) =
\frac{1}{\sqrt{-2}}\sum_{\ell\in \bZ_m} \Big(\int_0^{X_j} (\rho_0^\ell)^*\theta^\alpha_k \Big)\psi_\ell.
\end{equation}

The Taylor series expansion of $(\rho_{\btau}^{\ell_1,\ldots,\ell_n})^*\omega_{g,n}$ at $\btau=0$ is
$$
(\rho_{\btau}^{\ell_1,\ldots,\ell_n})^*\omega_{g,n}=\sum_{l_1,\cdots,l_p\in \bZ_{\geq 0}}
(\rho_0^{\ell_1,\ldots,\ell_n})^*\Big((\nabla_{\frac{\partial}{\partial \tau_1}})^{l_1}
\cdots (\nabla_{\frac{\partial}{\partial \tau_p}})^{l_p}
\omega_{g,n}\Big|_{\btau=0}\Big)\cdot \frac{\tau_1^{l_1}}{l_1!}\cdots \frac{\tau_p^{l_p}}{l_p!}.
$$

Recall that $\bGa_{g,n}(\cX)$ is the set of stable genus $g$ graphs with $n$ open leaves
and any number of primary leaves. Given  $\vGa=(\Gamma,g, \alpha,k)\in \bGa_{g,n}(\cX)$
with open leaves $l_1,\ldots,l_n$, we define its B-model weight to be
\begin{equation} \label{eqn:wB}
\begin{aligned}
w_B(\vGa) =&(-1)^{g(\vGa)-1}\prod_{v\in V(\Gamma)}
\Big(\frac{h^\alpha_1|_{\btau=0}}{\sqrt{-2}}\Big)^{2-2g-\val(v)} \langle \prod_{h\in H(v)} \tau_{k(h)}\rangle_{g(v)}
 \prod_{e\in E(\Gamma)} \check{B}^{\alpha(v_1(e)),\alpha(v_2(e))}_{k(e),l(e)}(e)\Big|_{\btau=0}\\
& \cdot\prod_{l\in L^o(\Gamma)}\frac{1}{\sqrt{-2}}(\check{h}^{\btau})^{\alpha(l)}_{k(l)}.
\prod_{j=1}^n (\cL^\xi)^\alpha_k(l_j)
\prod_{l\in L^1(\Gamma)}(-\frac{1}{\sqrt{-2}})\check{h}^{\alpha(l)}_{k(l)}(l)\Big|_{\btau=0}.
\end{aligned}
\end{equation}

We have the following B-model graph sum formulae.
\begin{theorem}[B-model graph sum]\label{thm:Bsum}
\begin{enumerate}
\item $($disk invariants$)$
$$
\check{F}_{0,1}(\btau;X)= \check{F}_{0,1}(0;X)
+\sum_{a=1}^p \tau_a \frac{\partial \check{F}_{0,1}}{\partial \tau_a}(0;X)+ \sum_{\vGa \in \bGa_{0,1}(\cX)}\frac{w_B(\vGa)}{|\Aut(\vGa)|}.
$$
\item $($annulus invariants$)$
$$
\check{F}_{0,2}(\btau;X_1,X_2)=  \check{F}_{0,2}(0;X_1,X_2)+
\sum_{\vGa \in \bGa_{0,2}(\cX)}\frac{w_B(\vGa)}{|\Aut(\vGa)|}.
$$
\item For $2g-2+n>0$,
$$
\check{F}_{g,n}(\btau;X_1,\ldots,X_n) = \sum_{\vGa \in \bGa_{g,n}(\cX)}\frac{w_B(\vGa)}{|\Aut(\vGa)|}.
$$
\end{enumerate}
\end{theorem}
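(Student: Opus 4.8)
The plan is to derive all three formulas from the Eynard--Orantin graph sum of Theorem \ref{thm:DOSS} by Taylor expanding the B-model potentials in the flat coordinates $\btau$ at $\btau=0$ and performing two ``dressing'' operations on the resulting graphs. The starting point is the DOSS graph sum for $\omega_{g,m}$ (valid for $2g-2+m>0$), in which every open leaf carries a form $\theta^{\alpha}_{k}$ and the vertex, edge, and dilaton weights are built from $h^\alpha_1(q)$, $\check B^{\alpha,\beta}_{k,l}(q)$, and $\check h^\alpha_k(q)$. To obtain $\check F_{g,n}$ I first use the special geometry property recorded at the beginning of Section \ref{sec:Bgraph}, namely $\nabla_{\partial/\partial\tau_a}\omega_{g,m}=\int_{B_a}\omega_{g,m+1}$, iterated $\ell=l_1+\cdots+l_p$ times, to write each Taylor coefficient $(\nabla_{\partial/\partial\tau_1})^{l_1}\cdots(\nabla_{\partial/\partial\tau_p})^{l_p}\omega_{g,n}\big|_{\btau=0}$ as an iterated $B$-cycle integral of $\omega_{g,n+\ell}\big|_{\btau=0}$. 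Since the $\nabla_{\partial/\partial\tau_a}$ commute (the $\omega_{g,m}$ are symmetric in their arguments) this is well defined, and I may then apply Theorem \ref{thm:DOSS} to $\omega_{g,n+\ell}\big|_{\btau=0}$ to get a graph sum over $\bGa_{g,0,n+\ell}(\BG)$.

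Next I dress the leaves. The $\ell$ ``extra'' open leaves, integrated over $B$-cycles, are converted into primary leaves: the computation at $q=0$ of $\int_{z\in B_a}\theta^\alpha_k$ carried out in Section \ref{sec:Bgraph} gives exactly $[u^{-k}]\frac{1}{|G|}\sqrt{-2/(w_1w_2w_3)}\prod_i w_i^{c_i(h_a)}\sum_\beta f^\alpha_\beta(u,0)\chi_\beta(h_a)$, which, weighted by $\tau_a$ and summed over $a$, is the B-model primary leaf weight $(\check h^{\btau})^\alpha_k$ of \eqref{eqn:primaryB}. The original $n$ leaves are instead integrated in the open-string variables $X_j$ through the local isomorphisms $\rho^{\ell_j}_0$; by the identity $\xi^{\ell-}_{\beta,0}(X)=\int_0^X(\rho^\ell_0)^*\theta^\beta_0$ of Section \ref{sec:inverse} and its descendant analogues these integrals produce precisely the open leaf weights $(\cL^\xi)^\alpha_k(l_j)$ of \eqref{eqn:openB}. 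After both operations, the graph sum over $\bGa_{g,0,n+\ell}(\BG)$ with $\ell$ of its open leaves recast as primary leaves becomes a graph sum over $\bGa_{g,\ell,n}(\BG)$; summing over $\ell\geq 0$ and recalling $\bGa_{g,n}(\cX)=\bigcup_\ell\bGa_{g,\ell,n}(\BG)$ assembles all surviving factors into the weight $w_B(\vGa)$ of \eqref{eqn:wB}, with its overall sign $(-1)^{g(\vGa)-1}$ and the $\btau=0$ evaluations of $h^\alpha_1$, $\check B$, $\check h$.

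The step I expect to be the main obstacle is the combinatorial bookkeeping that matches the Taylor factorials with the graph automorphisms. The Taylor expansion supplies a factor $\prod_a \tau_a^{l_a}/l_a!$; the leaves arising from $B$-cycle integration are \emph{unordered} (they become primary leaves), while the original $n$ leaves remain \emph{ordered} (they become open leaves). I must check that distributing the $p$ species of $B$-cycle among the $\ell$ extra leaves, together with the $1/\ell!$ implicit in symmetrizing over them, reproduces exactly the factor $1/|\Aut(\vGa)|$ in the target formula after re-indexing from $\bGa_{g,0,n+\ell}(\BG)$ to $\bGa_{g,\ell,n}(\BG)$. Since the primary-leaf dressing is linear and species-labeled, the automorphism counts transform compatibly, and I would mirror the analogous leaf-symmetrization identity already used on the A-model side.

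Finally the two unstable cases are treated directly from the definitions in Section \ref{sec:Bgraph}, but within the same framework. For the disk, the relation $(\nabla_{\partial/\partial\tau_a}\Phi)(z)=\int_{z'\in B_a}B(z',z)$ of Section \ref{sec:third} gives $\nabla_{\tau_{a_\ell}}\cdots\nabla_{\tau_{a_1}}\Phi=\int_{B_{a_1}}\cdots\int_{B_{a_\ell}}\omega_{0,\ell+1}$, so orders $\ell=0,1$ produce the explicit terms $\check F_{0,1}(0;X)$ and $\sum_a\tau_a\,\partial_{\tau_a}\check F_{0,1}(0;X)$ (no stable genus-zero graph with a single open leaf has fewer than two primary leaves), while for $\ell\geq 2$ the stable form $\omega_{0,\ell+1}$ is governed by Theorem \ref{thm:DOSS}, with the base cases identified through Propositions \ref{Phi-zero-one} and \ref{Phi-One-Two}. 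For the annulus I start from $\omega_{0,2}=B(z_1,z_2)$: the order $\ell=0$ gives $\check F_{0,2}(0;X_1,X_2)$, and for $\ell\geq 1$ the derivatives $\nabla^{(\ell)}\omega_{0,2}=\int\cdots\int\omega_{0,2+\ell}$ are again stable, with Lemma \ref{C} and Example \ref{pants} providing the identification of $\omega_{0,3}$ in terms of the $\theta^\gamma_0$ needed to seed the dressing. In each case the same conversion of $B$-cycle integrals to primary leaves and $X_j$-integrals to open leaves yields the asserted graph sum over $\bGa_{0,1}(\cX)$ and $\bGa_{0,2}(\cX)$.
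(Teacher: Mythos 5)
Your proposal is correct and follows essentially the same route as the paper: Section \ref{sec:Bgraph} derives Theorem \ref{thm:Bsum} precisely by Taylor expanding $(\rho_{\btau}^{\ell_1,\ldots,\ell_n})^*\omega_{g,n}$ at $\btau=0$ via the special geometry property, applying the Dunin-Barkowski--Orantin--Shadrin--Spitz graph sum (Theorem \ref{thm:DOSS}) to $\omega_{g,n+\ell}|_{\btau=0}$, converting the $B$-cycle integrals $\int_{B_a}\theta^\alpha_k$ into the primary leaves \eqref{eqn:primaryB} and the $X_j$-integrals into the open leaves \eqref{eqn:openB}, with the unstable cases $(0,1)$ and $(0,2)$ seeded by Propositions \ref{Phi-zero-one} and \ref{Phi-One-Two}, Lemma \ref{C}, and Example \ref{pants}. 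Your explicit attention to the factorial-versus-automorphism bookkeeping when recasting the $\ell$ unordered $B$-cycle leaves as primary leaves is a point the paper leaves implicit (it mirrors the A-model identity \eqref{eqn:gln} from \cite{FLZ1}), but it does not constitute a different argument.
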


\subsection{All genus open-closed mirror symmetry} \label{sec:final}

\begin{proposition}\label{AB-disk}
$$
\check{F}_{0,1}(0;X)+\sum_{a=1}^p \tau_a \frac{\partial \check{F}_{0,1}}{\partial \tau_a}(0;X)\\
=\Phi_{1,-2}(X)+\sum_{a=1}^p \tau_a \Phi_{h_a,-1}(X).
$$
\end{proposition}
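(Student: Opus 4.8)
The plan is to match the two sides coefficient by coefficient in $\btau$, i.e.\ to prove separately the two identities
$$
\check{F}_{0,1}(0;X)=|G|\,\Phi^1_{-2}(X),\qquad
\frac{\partial \check{F}_{0,1}}{\partial \tau_a}(0;X)=|G|\,\Phi^{h_a}_{-1}(X),
$$
after which contracting against $1$ and $\tau_a$ and invoking \eqref{eqn:disk-zero} reproduces the stated formula. (Here $\Phi^1_{-2},\Phi^{h_a}_{-1}$ denote the objects written $\Phi_{1,-2},\Phi_{h_a,-1}$ in the statement.) Both identities will be established by the mechanism already used in Section \ref{sec:inverse}: expanding, at $X=0$ via an inverse Laplace transform, a regularized primitive of a $\nabla$-derivative of $\Phi$ along the sheets $\rho_0^\ell$.

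First I would record the B-model objects as primitives. Since $\Phi=y\,dx$ on the mirror curve, the $\ell$-th summand of $\check{F}_{0,1}(0;X)$ is exactly the holomorphic primitive $\int_0^X(\rho_0^\ell)^*\big(\Phi-y_0^\ell(0)\,dx\big)$, which vanishes at $X=0$; likewise, holding $X$ fixed and independent of $q$ (as in Section \ref{sec:mirror-curve}, so that $\tfrac{\partial y}{\partial\tau_a}\,dx=\nabla_{\partial/\partial\tau_a}\Phi$), the $\tau_a$-derivative $\tfrac{\partial \check{F}_{0,1}}{\partial\tau_a}(0;X)$ is the corresponding regularized primitive of $(\rho_0^\ell)^*\nabla_{\partial/\partial\tau_a}\Phi$, packaged through $\psi_\ell$. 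Next I would compute the relevant Laplace transforms at $q=0$: $\int_{\gamma_\alpha}e^{-ux}\Phi$ is given by the Theorem of Section \ref{sec:oscillatory} with only the $\vh=0$ term surviving, and $\int_{\gamma_\alpha}e^{-ux}\nabla_{\partial/\partial\tau_a}\Phi$ by Corollary \ref{Phi-q-zero} with $h=h_a$ and $\age(h_a)=1$. Integration by parts along $\gamma_\alpha$ (boundary terms vanish because $x\to+\infty$ at both ends of the thimble) turns each of these into the Laplace transform of the associated regularized primitive up to an explicit power of $u$, precisely the power accounting for passing from $\Phi^h_0$ to $\Phi^1_{-2}$ (two $X\tfrac{d}{dX}$-antiderivatives) and to $\Phi^{h_a}_{-1}$ (one antiderivative).

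I would then read off the $X$-expansion by taking residues at the poles of $\Gamma(uw_1+c_1(h))$ and $\Gamma(uw_2+c_2(h))$, exactly as in the computation of $\xi^{\ell-}_{\beta,0}$ in Section \ref{sec:inverse}, and re-express the result in the $\one'_{k/m}$ basis using the labeling $\bar{p}_\ell=(0,e^{\pi\sqrt{-1}(2\ell+1)/m})$ and $\chi_\alpha(\eta_2^{-k})=e^{-2\pi\sqrt{-1}k\ell/m}$; this last relation makes the $\alpha$-dependence drop out after summing over sheets, leaving an answer depending only on $k$. Comparing the resulting Gamma-quotient $\Gamma(d_0(w_1+w_2)+c_3(h))/[\Gamma(d_0w_1-c_1(h)+1)\Gamma(d_0w_2-c_2(h)+1)]$, the sign $(-1)^{\lfloor d_0w_3+k/m\rfloor}$, and the $1/(m\sv)$ normalization with the definition \eqref{eqn:disk-function} of $D'(d_0,k)$ and with the definitions of $\Phi^1_{-2},\Phi^{h_a}_{-1}$ restricted to the sectors $h(d_0,k)=1$ and $h(d_0,k)=h_a$ yields the two claimed identities; the overall factor $|G|$ arises from the single $1/|G|$ in the oscillatory integral versus the $1/|G|$ built into $\Phi^h_a$.

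The hard part will be bookkeeping rather than anything conceptual. One must track the branch-of-logarithm conventions that fix $y_0^\ell(0)$, verify that the phase factors $e^{\sqrt{-1}(\vartheta_\alpha+\pi w_3)u}$ and $e^{\sqrt{-1}\pi c_3(h)}$ combine with the residue signs to produce exactly $(-1)^{\lfloor d_0w_3+k/m\rfloor}$, and confirm that the powers of $u$ generated by the integration by parts shift the height index correctly (equivalently, that the two $X\tfrac{d}{dX}$-antiderivatives are faithfully reproduced, which can be cross-checked by applying $X\tfrac{d}{dX}$ to both sides and matching against $\Phi^1_0$ and $\Phi^{h_a}_0$). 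Once these phases and normalizations are reconciled, the matching with $D'(d_0,k)$ is immediate and the proposition follows.
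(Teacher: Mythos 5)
Your proposal is correct in substance, but it takes a different route from the paper's proof. The paper never re-inverts a Laplace transform: it applies $X\frac{d}{dX}$ twice to $\check{F}_{0,1}(0;X)$ (once to $\frac{\partial \check{F}_{0,1}}{\partial \tau_a}(0;X)$), uses Proposition \ref{Phi-zero-one} — i.e.\ the Lemma \ref{mero} mechanism — to write $d(\frac{dy}{dx})$ and $d(\nabla_{\partial/\partial q_a}\Phi/dx)$ as combinations of the $\theta^\beta_0$, passes to $q=0$ via \eqref{eqn:h-zero} and \eqref{eqn:XY-zero}, and then simply cites Proposition \ref{xi-txi} (where the residue extraction of Section \ref{sec:inverse} was done once and for all at the level of $\theta^\beta_0$) before integrating back, using that both sides are power series in $X$ with no constant term; the final comparison is exactly your two identities, in the form \eqref{eqn:Bdisk-zero}, \eqref{eqn:Bdisk-one} matched against \eqref{eqn:disk-zero}. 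You instead propose to inverse-Laplace $\Phi$ and $\nabla_{\partial/\partial\tau_a}\Phi$ directly, starting from the oscillatory-integral Theorem and Corollary \ref{Phi-q-zero} and integrating by parts along the thimbles. This works — the boundary terms do vanish since the primitives grow subexponentially and $x\to+\infty$ at both ends — and it has the virtue of exhibiting the disk function $D'(d_0,k)$ and the hypergeometric structure directly, without routing through $\theta^\beta_0$. What it costs, and what you correctly flag but do not execute, is the extra bookkeeping that the paper's differentiation trick kills automatically: at $u=0$ the quotient $\Gamma(uw_1)\Gamma(uw_2)/\Gamma(1-uw_3)$ has a double pole, whose inverse Laplace produces precisely the constant and linear-in-$x$ (i.e.\ $\log X$) terms removed by the subtraction of $y^\ell_0(0)$ — and likewise the $d_0=0$ simple-pole term for $h=h_a$ with $c_1(h_a)=0$, which is why $\Phi^{h_a}_{-1}$ is summed over $d_0>0$ only; one must also separate the two branches of each thimble by attributing residues to poles of $\Gamma(uw_1+c_1(h))$ versus $\Gamma(uw_2+c_2(h))$, as in the computation of $\xi^{\ell-}_{\beta,0}$. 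These verifications are routine given Section \ref{sec:inverse} (and one minor remark: on your route \eqref{eqn:disk-zero} is not actually needed, since you match the $\Phi^h_a$ directly rather than the $\txi^\gamma_a$), so your sketch is completable; the paper's organization is just the shorter path, trading your direct residue computation for one application each of Propositions \ref{Phi-zero-one} and \ref{xi-txi}.
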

\begin{proof} From the definition,
$$
-X\frac{d}{dX}\check{F}_{0,1}(0;X) = \sum_{\ell\in \bZ_m} (y^\ell_0(X)-y^\ell_0(0))\psi_\ell.
$$
By Proposition \ref{Phi-zero-one}, Equation \eqref{eqn:Phi-one}, Equation \eqref{eqn:h-zero},
and Proposition \ref{xi-txi},
\begin{eqnarray*}
(X\frac{d}{dX})^2 \check{F}_{0,1}(0;X)&=&\sum_{\beta\in G^*} \frac{1}{|G|\sqrt{-2w_1w_2w_3}} \lim_{q\to 0} \xi^\beta_0(X)
= \sum_{\beta\in G^*}\frac{1}{|G|^2 w_1w_2w_3} \txi^\beta_0(X)|_{\sv=1}\\
(X\frac{d}{dX}) \frac{\partial \check{F}_{0,1}}{\partial \tau_a}(0;X) &=&
\frac{1}{|G|\sqrt{-2w_1w_2w_3}}\sum_{\beta\in G^*}\chi_\beta(h_a)\prod_{i=1}^3 w_i^{c_i(h_a)}
\lim_{q\to 0} \xi^\beta_0(X)\\
&=& \frac{1}{|G|^2w_1w_2w_3} \sum_{\beta\in G^*}\chi_\beta(h_a)\prod_{i=1}^3 w_i^{c_i(h_a)}\txi^\beta_0(X)|_{\sv=1}.
\end{eqnarray*}
$\check{F}_{0,1}(0;X)$ and $\frac{\partial \check{F}_{0,1}}{\partial \tau_a}(0;X)$ are power series in $X$ with
no constant term, so
\begin{equation}\label{eqn:Bdisk-zero}
\check{F}_{0,1}(0;X)= \frac{1}{|G|^2 w_1w_2w_3} \txi^\beta_{-2}(X)|_{\sv=1}
\end{equation}
\begin{equation}\label{eqn:Bdisk-one}
\frac{\partial \check{F}_{0,1}}{\partial \tau_a}(0;X)= \frac{1}{|G|^2 w_1w_2w_3}
 \sum_{\beta\in G^*}\chi_\beta(h_a)\prod_{i=1}^3 w_i^{c_i(h_a)} \txi^\beta_{-1}(X)|_{\sv=1}.
\end{equation}
The lemma follows form \eqref{eqn:Bdisk-zero}, \eqref{eqn:Bdisk-one},
and \eqref{eqn:disk-zero}.
\end{proof}

\begin{proposition}\label{AB-annulus}
$$
\check{F}_{0,2}(0;X_1,X_2) =- F_{0,2}^{\cX,(\cL,f)}(0;X_1,X_2).
$$
\end{proposition}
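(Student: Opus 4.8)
The plan is to apply the Euler operators $D_i:=X_i\frac{\partial}{\partial X_i}$ and reduce the claim to an identity already essentially recorded in \eqref{eqn:annulus-zero} and Lemma~\ref{C}. Both $\check F_{0,2}(0;X_1,X_2)$ and $F_{0,2}^{\cX,(\cL,f)}(0;X_1,X_2)$ are $H^*_\CR(\cB\bmu_m;\bC)^{\otimes2}$-valued power series each of whose monomials $X_1^aX_2^b$ has $a,b\geq 1$: the A-model side is a generating series over $\mu_1,\mu_2>0$, and the B-model side is a double $X$-integral from $0$. On such series $D_1+D_2$ multiplies a monomial by $a+b\geq 2$, hence is injective, so it suffices to prove $(D_1+D_2)\check F_{0,2}(0)=-|G|^2(D_1+D_2)F_{0,2}^{\cX,(\cL,f)}(0)$. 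By \eqref{eqn:annulus-zero} the right-hand side is $-\frac{1}{w_1w_2w_3}\sum_{\gamma\in G^*}\txi^\gamma_0(X_1)\txi^\gamma_0(X_2)\big|_{\sv=1}$, and substituting $\sum_\gamma\txi^\gamma_0(X_1)\txi^\gamma_0(X_2)|_{\sv=1}=-\tfrac{w_1w_2w_3}{2}\sum_\gamma\xi^\gamma_0(X_1)\xi^\gamma_0(X_2)$ via Proposition~\ref{xi-txi} (with $\xi^\gamma_0=\sum_\ell\xi^{\ell-}_{\gamma,0}\psi_\ell$ as in \eqref{eqn:xi-beta}) turns the target into
$$
(D_1+D_2)\check F_{0,2}(0;X_1,X_2)=\tfrac12\sum_{\gamma\in G^*}\xi^\gamma_0(X_1)\,\xi^\gamma_0(X_2).
$$

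Writing $\check F_{0,2}(0)=\sum_{\ell_1,\ell_2}A^{\ell_1,\ell_2}\psi_{\ell_1}\otimes\psi_{\ell_2}$, each entry $A^{\ell_1,\ell_2}$ is the double antiderivative of the density of $(\rho_0^{\ell_1,\ell_2})^*\omega_{0,2}$ with the propagator $\frac{dX_1dX_2}{(X_1-X_2)^2}$ removed. Using $x=-\log X$, so that $D_i=-\partial/\partial x_i$ and $dx_1dx_2=dX_1dX_2/(X_1X_2)$, one gets $D_1D_2A^{\ell_1,\ell_2}=\Omega^{\ell_1,\ell_2}-P$, where $\Omega^{\ell_1,\ell_2}$ is the chart expression of the ratio $\omega_{0,2}/(dx_1dx_2)$ occurring in \eqref{eqn:C} and $P:=\frac{X_1X_2}{(X_1-X_2)^2}$. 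The crucial observation is that $P$ is homogeneous of degree $0$, hence $(D_1+D_2)P=0$; this is what makes the computation robust and, in particular, insensitive to the off-diagonal ($\ell_1\neq\ell_2$) interpretation of the subtracted propagator. Consequently, since $D_1D_2$ commutes with $D_1+D_2$,
$$
D_1D_2\big[(D_1+D_2)A^{\ell_1,\ell_2}\big]=(D_1+D_2)\big[D_1D_2A^{\ell_1,\ell_2}\big]=(D_1+D_2)\Omega^{\ell_1,\ell_2}.
$$

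Next I would evaluate $(D_1+D_2)\Omega^{\ell_1,\ell_2}$ from Lemma~\ref{C}. Pulling back $C=\tfrac12\sum_\gamma\theta^\gamma_0(z_1)\theta^\gamma_0(z_2)$ by $\rho_0^{\ell_1,\ell_2}$, and using $(\rho_0^\ell)^*\theta^\gamma_0=(\xi^{\ell-}_{\gamma,0})'\,dX$ together with the definition \eqref{eqn:C} of $C$, gives
$$
(D_1+D_2)\Omega^{\ell_1,\ell_2}=\tfrac{X_1X_2}{2}\sum_{\gamma\in G^*}(\xi^{\ell_1-}_{\gamma,0})'(X_1)\,(\xi^{\ell_2-}_{\gamma,0})'(X_2)=D_1D_2\Big[\tfrac12\sum_{\gamma\in G^*}\xi^{\ell_1-}_{\gamma,0}(X_1)\,\xi^{\ell_2-}_{\gamma,0}(X_2)\Big].
$$
Comparing with the previous display, $D_1D_2$ annihilates the difference $\tfrac12\sum_\gamma\xi^{\ell_1-}_{\gamma,0}(X_1)\xi^{\ell_2-}_{\gamma,0}(X_2)-(D_1+D_2)A^{\ell_1,\ell_2}$. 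Since $D_1D_2=X_1X_2\,\partial_{X_1}\partial_{X_2}$, any power series it kills has the form $f(X_1)+g(X_2)$; but both terms in the difference have positive bidegree (each $\xi^{\ell-}_{\gamma,0}$ vanishes at $X=0$, and $A^{\ell_1,\ell_2}$ has positive bidegree), which forces the difference to vanish. Summing over $\ell_1,\ell_2$ against $\psi_{\ell_1}\otimes\psi_{\ell_2}$ gives exactly the reduced target, and injectivity of $D_1+D_2$ finishes the proof.

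The step I expect to be the main obstacle is the operator bookkeeping of the middle paragraphs: reconciling $D_1D_2$, which is the operator naturally produced by the Bergman-kernel identity of Lemma~\ref{C} and by the double antiderivative defining $\check F_{0,2}$, with $D_1+D_2$, which governs the A-model formula \eqref{eqn:annulus-zero}. The reconciliation rests on the degree-$0$ homogeneity of the propagator $P$ (so $(D_1+D_2)P=0$), the commutation $D_1D_2(D_1+D_2)=(D_1+D_2)D_1D_2$, and the elementary vanishing statement for $D_1D_2$-harmonic series of positive bidegree. The off-diagonal terms $\ell_1\neq\ell_2$, where the naive propagator subtraction is not holomorphic, need a one-line justification, but they cause no genuine difficulty precisely because $P$ drops out under $D_1+D_2$.
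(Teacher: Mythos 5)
Your proposal is correct and follows essentially the same route as the paper's proof: apply the Euler operator $X_1\frac{\partial}{\partial X_1}+X_2\frac{\partial}{\partial X_2}$, identify the result with the double integral of $C(z_1,z_2)$ via Lemma \ref{C}, convert to $\frac{1}{2}\sum_{\gamma\in G^*}\xi^\gamma_0(X_1)\xi^\gamma_0(X_2)$, match it against \eqref{eqn:annulus-zero} through Proposition \ref{xi-txi}, and integrate back using vanishing at the origin. Your middle paragraphs (the $D_1D_2$ bookkeeping, the degree-$0$ homogeneity of the propagator, and the kernel argument for $D_1D_2$ on series of positive bidegree) simply make explicit the first equality that the paper asserts without comment, and your remark on the $\ell_1\neq\ell_2$ components correctly identifies and resolves the only point where the paper's definition requires a charitable reading.
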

\begin{proof} Let $C= C(z_1,z_2)$ be defined by \eqref{eqn:C}. Then
\begin{eqnarray*}
&& (X_1\frac{\partial}{\partial X_1} + X_2\frac{\partial}{\partial X_2})\check{F}_{0,2}(0;X_1,X_2)\\
&=& \sum_{\ell_1,\ell_2\in \bZ_m} \int_0^{X_1}\int_0^{X_2}(\rho_0^{\ell_1,\ell_2})^*C
\psi_{\ell_1}\otimes\psi_{\ell_2}\\
&=&\frac{1}{2} \sum_{\ell_1,\ell_2\in \bZ_m} \sum_{\gamma\in G^*} \int_0^{X_1}(\rho_0^{\ell_1})^* \theta^\gamma_0
\int_0^{X_2}(\rho_0^{\ell_2})^* \theta^\gamma_0 \psi_{\ell_1}\otimes \psi_{\ell_2}\\
&=&\lim_{q\to 0}\frac{1}{2}\sum_{\gamma\in G^*} \xi^\gamma_0(X_1)\xi^\gamma_0(X_2)\\
&=&- \frac{1}{|G|^2 w_1w_2w_3} \Big(\sum_{\gamma\in G^*}\txi^\gamma_0(X_1)\txi^\gamma_0(X_2)\Big)|_{\sv=1} \\
&=& -(X_1\frac{\partial}{\partial X_1} + X_2\frac{\partial}{\partial X_2})F_{0,2}^{\cX,(\cL,f)}(0;X_1,X_2),
\end{eqnarray*}
where the second equality follows from Lemma \ref{C}, the fourth equality follows from
Proposition \ref{xi-txi}, and the last equality follows from \eqref{eqn:annulus-zero}.
Both $W_{0,2}(0;X_1,X_2)$ and $F^{\cX,(\cL,f)}_{0,2}(0;X_1,X_2)$ are $H^*_{\CR}(\cB\bmu_m;\bC)^{\otimes 2}$-valued
power series in $X_1, X_2$ which vanish at $(X_1,X_2)=(0,0)$, so
$$
\check{F}_{0,2}(0;X_1,X_2)=- F_{0,2}^{\cX,(\cL,f)}(0;X_1,X_2).
$$
\end{proof}

\begin{proposition}\label{AB-stable}
For any $\vGa\in \bGa_{g,n}(\cX)$,
$$
w_B(\vGa)=(-1)^{g(\vGa)-1 +n} w_A(\vGa),
$$
where $w_A(\vGa)$ is defined in Section \ref{sec:Agraph}, and
$w_B(\vGa)$ is defined in Section \ref{sec:Bgraph}.
\end{proposition}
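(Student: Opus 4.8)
The plan is to prove the identity \emph{factor by factor} on the fixed decorated graph $\vGa$, comparing the A-model building blocks assembled in \eqref{eqn:wA} with the B-model building blocks in \eqref{eqn:wB}. The whole comparison rests on a single analytic dictionary: at $q=0$, writing $z=1/u$, one has
$$
f^\alpha_\beta(u,0) = R(-1/u)^\beta_\alpha .
$$
I would establish this by matching the Bernoulli-polynomial exponential in Theorem \ref{thm:f-zero} against the explicit formula \eqref{eqn:Rcan} for the $R$-matrix, substituting $h\mapsto h^{-1}$ in the sum over $G$ and using $B_{m+1}(1-x)=(-1)^{m+1}B_{m+1}(x)$ together with $c_i(h^{-1})=1-c_i(h)$ (away from $c_i(h)=0$). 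Once this is in hand, the substitutions $z=1/u$ and $w=1/v$ convert each Laplace-transformed B-model ingredient into the corresponding $R$-matrix A-model ingredient.

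With this dictionary I would check that the four ``interior'' weight types agree \emph{on the nose}. For vertices, \eqref{eqn:h-zero} gives $h^\alpha_1|_{q=0}/\sqrt{-2}=1/(|G|\sqrt{w_1w_2w_3})$, so the B-model factor $(h^\alpha_1|_0/\sqrt{-2})^{2-2g-\val(v)}$ is exactly the A-model factor $(|G|\sqrt{w_1w_2w_3})^{2g-2+\val(v)}$, while the integrals $\langle\prod\tau_{k(h)}\rangle_{g(v)}$ are literally identical. For edges I would feed $f=R(-1/u)$ into \eqref{eqn:B-ff}, observe that $\tfrac{uv}{u+v}=\tfrac{1}{z+w}$, and read off $\check B^{\alpha,\beta}_{k,l}|_0 = \cE^{\alpha,\beta}_{k,l}(e)$. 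For primary leaves, comparing \eqref{eqn:primaryB} (with its $\tfrac{1}{\sqrt{-2}}$ prefactor) against \eqref{eqn:primaryA} and using $\tfrac{1}{\sqrt{-2}}\cdot\tfrac{1}{|G|}\sqrt{-2/(w_1w_2w_3)}=\tfrac{1}{|G|\sqrt{w_1w_2w_3}}$ gives equality. For dilaton leaves, Corollary \ref{dilaton-zero} shows $\check h^\alpha(u,0)=\tfrac{1}{|G|}\sqrt{-2/(w_1w_2w_3)}\sum_\beta f^\alpha_\beta(u,0)$, so the same cancellation identifies the two dilaton factors after the bookkeeping reconciling the height indexing of the two graph sums. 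Thus none of these types contributes any extra sign or power of $|G|$.

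The entire discrepancy $(-1)^{g(\vGa)-1+n}|G|^n$ must therefore come from the prefactor $(-1)^{g(\vGa)-1}$ in \eqref{eqn:wB} together with the $n$ open leaves, and the plan is to show that each open leaf satisfies
$$
(\cL^\xi)^\alpha_k(l_j) = -|G|\,(\cL^{\txi})^\alpha_k(l_j).
$$
At leading order $k=0$ this is immediate: $\int_0^{X_j}(\rho_0^\ell)^*\theta^\alpha_0=\xi^{\ell-}_{\alpha,0}$, so $\sum_\ell\xi^{\ell-}_{\alpha,0}\psi_\ell=\xi^\alpha_0\to\sqrt{-2/(w_1w_2w_3)}\,\txi^\alpha_0|_{\sv=1}$ by Proposition \ref{xi-txi}, and tracking $\tfrac{-1}{\sqrt{-2}}$ on the B-side against $\tfrac{1}{|G|\sqrt{w_1w_2w_3}}$ on the A-side (where $R(-z)=\mathrm{Id}+O(z)$) produces exactly the constant $-|G|$, uniformly in $\ell$.

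The hard part is this open-leaf identity for general height $k$, since here the two graph sums genuinely differ in form: the A-model open-leaf weight $(\cL^{\txi})^\alpha_k(l_j)$ carries an $R(-z)$-dressing of $\txi^\beta(z,X)$, whereas the B-model open leaf \eqref{eqn:openB} is the bare integrated form $\int_0^{X_j}(\rho_0^\ell)^*\theta^\alpha_k$. To reconcile them I would invoke Lemma \ref{xihxi}, writing $\theta^\alpha_k = d\hxi_{\alpha,k}-\sum_{i=0}^{k-1}\sum_\beta \check B^{\alpha,\beta}_{k-1-i,0}\,d\hxi_{\beta,i}$, and argue that the bare tower $\hxi_{\alpha,k}=(X\tfrac{d}{dX})^{k-1}(X\theta^\alpha_0/dX)$ mirrors the index-raising recursion $\txi^\alpha_{a+1}=X\tfrac{d}{dX}\txi^\alpha_a$ applied to $\txi^\alpha_0$, while the $\check B$-correction terms are precisely those generated by the $R$-matrix dressing once $\check B$ is expressed through $f=R(-1/u)$ via \eqref{eqn:B-ff}; in effect the Eynard recursion of Lemma \ref{xi-recursion} on the B-side is the mirror of the $R$-action on the A-side. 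Making this equivalence precise at all $k$, and confirming that the constant stays $-|G|$ independently of $k$ and $\ell$, is the main obstacle. Everything else being matched, multiplying the vertex, edge, primary- and dilaton-leaf factors with the $n$ open-leaf factors $-|G|$ and the prefactor $(-1)^{g(\vGa)-1}$ yields $w_B(\vGa)=(-1)^{g(\vGa)-1+n}|G|^n w_A(\vGa)$.
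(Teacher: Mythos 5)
Your proposal is correct and follows essentially the same route as the paper's proof: the dictionary $f^\alpha_\beta(u,0)=R(-1/u)^\beta_\alpha$ (the paper's Equation \eqref{eqn:Rf}), the factor-by-factor matching of vertices, edges, primary and dilaton leaves with no extra sign or $|G|$, and the open-leaf comparison via Lemma \ref{xihxi} together with Proposition \ref{xi-txi}. The step you flag as the main obstacle is closed in the paper exactly as you outline it, by noting that the edge computation yields $\check{B}^{\alpha,\beta}_{k,0}\big|_{\btau=0}=[z^{k+1}]\bigl(-R(-z)^\beta_\alpha\bigr)$ and $[z^0]R(-z)^\beta_\alpha=\delta_{\alpha,\beta}$, so that the $\check{B}$-corrections in Lemma \ref{xihxi} resum the $R(-z)$-dressing of $\txi^\beta(z,X_j)$ and give $(\cL^\xi)^\alpha_k(l_j)=-|G|(\cL^{\txi})^\alpha_k(l_j)$ for all $k$.
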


\begin{proof} We fix $\vGa\in \bGa_{g,n}(\cX)$.
From \eqref{eqn:Rcan} and Theorem \ref{thm:f-zero}, for any $\alpha,\beta\in G^*$,
\begin{equation}\label{eqn:Rf}
R(-z)^{\beta}_{\alpha}=f^{\alpha}_{\beta}(\frac{1}{z},0).
\end{equation}
Here we have used the fact that $B_m(1-x)=(-1)^mB_m(x)$, $B_m=0$ when $m$ is odd and $c_i(h)+c_i(h^{-1})=1-\delta_{c_i(h),0}$.

\noindent
1. {\it Vertex.} By Equation \eqref{eqn:h-zero},
\begin{equation}\label{eqn:ABvertex}
\frac{\lim_{\btau\to 0}h^\alpha_1}{\sqrt{-2}}=\frac{1}{|G|\sqrt{w_1w_2w_3}}.
\end{equation}

\noindent
2. {\it  Edge.}  By Equation \eqref{eqn:B-ff} (i.e. \cite[Equation (B.9)]{E11}),
\begin{eqnarray*}
\check{B}^{\alpha,\beta}_{k,l}\Big|_{\btau=0}&=&[u^{-k}v^{-l}]\left(\frac{uv}{u+v}(\delta_{\alpha,\beta}
-\sum_{\gamma\in G^*} f^\alpha_\gamma(u,0)f^\beta_\gamma(v,0))\right)\\
&=&[z^{k}w^{l}]\left(\frac{1}{z+w}(\delta_{\alpha,\beta}
-\sum_{\gamma\in G^*} f^\alpha_\gamma(\frac{1}{z},0)f^\beta_\gamma(\frac{1}{w},0))\right).
\end{eqnarray*}
By definition,
$$
\cE^{\alpha,\beta}_{k,l}
=[z^{k}w^{l}]\left(\frac{1}{z+w}(\delta_{\alpha,\beta}
-\sum_{\gamma\in G^*} R(-z)^{\gamma}_{\alpha}R(-w)^{\gamma}_{\beta})\right).
$$
It follows from \eqref{eqn:Rf} that
\begin{equation}\label{eqn:ABedge}
\check{B}^{\alpha,\beta}_{k,l}\Big|_{\btau=0} = \cE^{\alpha,\beta}_{k,l}.
\end{equation}

\noindent
3. {\it Primary leaf.}
By Equation \eqref{eqn:primaryB},
$$
\frac{1}{\sqrt{-2}}(\check{h}^{\btau})^{\alpha}_{k}=[u^{-k}]\Bigl(\frac{1}{|G|\sqrt{w_1w_2w_3}} \sum_{\beta\in G^*}
\sum_{a=1}^p \prod_{i=1}^{3}w_i^{c_i(h_a)}f_\beta^\alpha(u)\chi_\beta(h_a)\tau_a\Bigr).
$$
By Equation \eqref{eqn:primaryA},
$$
(\cL^\tau)^\alpha_k =[z^{k}]\Bigl(\frac{1}{|G|\sqrt{w_1w_2w_3}} \sum_{\beta\in G^*}
\sum_{a=1}^p\prod_{i=1}^{3}w_i^{c_i(h_a)} R(-z)_\alpha^\beta\chi_\beta(h_a)\tau_a\Bigr).
$$
It follows from \eqref{eqn:Rf} that
\begin{equation}\label{eqn:ABprimary}
\frac{1}{\sqrt{-2}}(\check{h}^{\btau})^\alpha_k =(\cL^{\btau})^\alpha_k.
\end{equation}

\bigskip

\noindent
4. {\em Open leaf.}
Given $\beta\in G^*$ and $k\in\bZ_{\geq 0}$, define
\begin{equation}\label{eqn:hxi-X}
\hxi^\beta_k(X) :=\sum_{\ell=0}^{m-1} \int_0^X (\rho_0^\ell)^*(d\hxi_{\beta,k}) \psi_\ell,
\end{equation}
which is an $H^*_{\CR}(\cB\bmu_m;\bC)$-valued holomorphic function on $D_\delta$.  Then
$$
\hxi^\beta_k(X)= (X\frac{d}{dX})^k \hxi^\beta_0(X),
$$
and
$$
\hxi^\beta_0(X) = \lim_{q\to 0} \xi^\beta_0,
$$
where $\xi^\beta_0$ is defined by Equation \eqref{eqn:xi-beta}. By Proposition \ref{xi-txi}
and definitions of $\hxi^\beta_k(X)$, $\txi^\beta_k(X)$,
\begin{equation}\label{eqn:hxi-txi}
\hxi^\beta_k(X)=-\frac{1}{|G|}\sqrt{\frac{-2}{w_1w_2w_3}} \txi^\beta_k(X).
\end{equation}
By \eqref{eqn:openB}, \eqref{eqn:hxi-X}, \eqref{eqn:hxi-txi}, and Lemma \ref{xihxi},
$$
(\cL^\xi)^\alpha_k(l_j)=\frac{-1}{|G| \sqrt{w_1w_2w_3}}\Big(\txi^\alpha_k -
 \sum_{i=0}^{k-1} \sum_{\beta\in G^*}\big(\check{B}^{\alpha,\beta}_{k-1-i,0}\big|_{\btau=0}\big)\txi^\beta_i(X_j)\Big).
$$
By item 2 ({\em Edge}) above, for $k\in \bZ_{\geq 0}$,
$$
\check{B}^{\alpha,\beta}_{k,0}\Big|_{\btau=0}=[z^{k}w^{0}]\left(\frac{1}{z+w}(\delta_{\alpha,\beta}
-\sum_{\gamma\in G^*} R(-z)^{\gamma}_{\alpha}R(-w)^{\gamma}_{\alpha})\right)
=[z^{k+1}](-R(-z)^{\beta}_{\alpha}).
$$
We also have
$$
[z^0](R(-z)^{\beta}_\alpha)=\delta_{\alpha,\beta}.
$$

Therefore,
$$
(\cL^{\xi})^\alpha_k(l_j)=[z^k]\Big(\frac{-1}{|G|\sqrt{w_1w_2w_3}} \sum_{\beta\in G^*}R(-z)^\beta_\alpha
\txi^\beta(z,X_j)\Big).
$$
Comparing the above expression with the definition of the A-model open leaf $(\cL^{\txi})^\alpha_k(l_j)$,
we conclude:
\begin{equation}\label{eqn:ABopen}
(\cL^\xi)^\alpha_k(l_j)=-(\cL^{\txi})^\alpha_k(l_j).
\end{equation}
\noindent
5. {\it Dilaton leaf.}
$$
(-\frac{1}{\sqrt{-2}})\check{h}^{\alpha}_{k}\Big|_{\btau=0}=
[u^{1-k}](-\frac{1}{\sqrt{-2}})\check{h}^{\alpha}(u)\Big|_{\btau=0}
=[u^{1-k}]\frac{-1}{|G|\sqrt{w_1 w_2 w_3}}\sum_{\beta\in G^*}f^\alpha_\beta(u,0),
$$
where the second identity follows from Corollary \ref{dilaton-zero}. By definition,
$$
(\cL^1)^\alpha_k =[z^{k-1}]\frac{-1}{|G|\sqrt{w_1w_2w_3}}\sum_{\beta\in G^*}R^\beta_\alpha(-z).
$$
It follows from \eqref{eqn:Rf} that
\begin{equation}\label{eqn:ABdilaton}
(-\frac{1}{\sqrt{-2}})\check{h}^{\alpha}_{k}\Big|_{\btau=0} =(\cL^1)^\alpha_k.
\end{equation}

\bigskip
By \eqref{eqn:wA}, \eqref{eqn:wB}, \eqref{eqn:ABvertex},
\eqref{eqn:ABedge}, \eqref{eqn:ABprimary}, \eqref{eqn:ABopen}, and \eqref{eqn:ABdilaton},
$$
w_B(\vGa)=(-1)^{g(\vGa)-1 +n}  w_A(\vGa).
$$
\end{proof}

Combining Theorem \ref{thm:Asum} (A-model graph sum), Theorem \ref{thm:Bsum} (B-model graph sum),
Proposition \ref{AB-disk}, Proposition \ref{AB-annulus}, and Proposition \ref{AB-stable}, we obtain:

\begin{theorem}[All genus open-closed mirror symmetry]\label{thm:main}
$$
\check{F}_{g,n}(\btau; X_1,\ldots, X_n) =(-1)^{g-1+n} F_{g,n}^{\cX, (\cL,f)}(\btau;X_1,\ldots, X_n).
$$
\end{theorem}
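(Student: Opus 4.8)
The plan is to prove the identity separately for the stable range $2g-2+n>0$ and for the two unstable pairs $(g,n)=(0,1)$ and $(0,2)$, in each case matching the two sides after expressing both as sums over the common set of decorated graphs $\bGa_{g,n}(\cX)$. For the stable case I would invoke Theorem \ref{thm:Asum}(3) to write
$$
F_{g,n}^{\cX,(\cL,f)}(\btau;X_1,\ldots,X_n)=\sum_{\vGa\in\bGa_{g,n}(\cX)}\frac{w_A(\vGa)}{|\Aut(\vGa)|}
$$
and Theorem \ref{thm:Bsum}(3) to write the analogous expansion of $\check{F}_{g,n}$ with weight $w_B(\vGa)$. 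Since both sums range over the \emph{same} index set, the theorem reduces to the purely combinatorial weight comparison $w_B(\vGa)=(-1)^{g(\vGa)-1+n}|G|^n w_A(\vGa)$ for each fixed graph, that is, Proposition \ref{AB-stable}, together with the observation that $g(\vGa)=g$ on $\bGa_{g,n}(\cX)$.

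I would carry out the weight comparison by matching the local contributions of the five building blocks (vertices, edges, primary leaves, open leaves, dilaton leaves) appearing in \eqref{eqn:wA} and \eqref{eqn:wB}. The unifying input is the identity $R(-z)^{\beta}_{\alpha}=f^{\alpha}_{\beta}(\frac{1}{z},0)$ of \eqref{eqn:Rf}, obtained by comparing the explicit $R$-matrix formula \eqref{eqn:Rcan} with the closed form of the Laplace transform $f^\alpha_\beta(u,0)$ from Theorem \ref{thm:f-zero}. Granting this, the vertex factors agree via $\lim_{\btau\to0}h^\alpha_1/\sqrt{-2}=1/(|G|\sqrt{w_1w_2w_3})$ (Equation \eqref{eqn:ABvertex}); the edge weights satisfy $\check{B}^{\alpha,\beta}_{k,l}|_{\btau=0}=\cE^{\alpha,\beta}_{k,l}$ by the Bergman-kernel formula \eqref{eqn:B-ff}; and the primary-leaf and dilaton-leaf weights match by \eqref{eqn:ABprimary} and \eqref{eqn:ABdilaton}, the latter using Corollary \ref{dilaton-zero}. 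Each building block is then sign- and $|G|$-neutral \emph{except} the $n$ open leaves, each contributing a factor $-|G|$; combined with the overall $(-1)^{g(\vGa)-1}$ in $w_B$, this produces exactly the claimed $(-1)^{g-1+n}|G|^n$.

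The hard part will be the open-leaf identity $(\cL^\xi)^\alpha_k(l_j)=-|G|(\cL^{\txi})^\alpha_k(l_j)$ of \eqref{eqn:ABopen}, the one place where the A-side and B-side definitions genuinely diverge. Establishing it requires three nontrivial ingredients: first, the recursion $\theta^\alpha_k=d\hxi_{\alpha,k}-\sum_{i,\beta}\check{B}^{\alpha,\beta}_{k-1-i,0}\,d\hxi_{\beta,i}$ of Lemma \ref{xihxi}, expressing the higher differentials through the antiderivative $\hxi_{\alpha,k}$; second, the expansion-at-$X=0$ computation of Section \ref{sec:inverse}, obtained by inverse Laplace transform from the oscillatory-integral evaluation (itself the output of the Hori--Iqbal--Vafa dimensional reduction and the Stirling asymptotics behind Theorem \ref{thm:f-zero}); and third, the resulting normalization $\lim_{q\to0}\xi^\beta_0=\sqrt{-2/(w_1w_2w_3)}\,\txi^\beta_0|_{\sv=1}$ of Proposition \ref{xi-txi}, which pins down the factor $-|G|$. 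Threading the edge-weight corrections from Lemma \ref{xihxi} through the Laplace/inverse-Laplace dictionary so that they align with the $R$-matrix action inside $(\cL^{\txi})^\alpha_k$ is the delicate bookkeeping that I expect to be the chief obstacle.

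Finally, I would dispatch the two unstable cases directly. For $(0,1)$, combining the disk formulas of Theorem \ref{thm:Asum}(1) and Theorem \ref{thm:Bsum}(1) reduces the claim to the open-leaf identity \eqref{eqn:ABopen} on the graph-sum tails together with Proposition \ref{AB-disk}, which matches the two anomalous boundary terms via Proposition \ref{xi-txi} and \eqref{eqn:disk-zero}. For $(0,2)$, Theorem \ref{thm:Asum}(2) and Theorem \ref{thm:Bsum}(2) reduce to the stable-type weight matching plus the constant-term comparison $\check{F}_{0,2}(0;X_1,X_2)=-|G|^2F_{0,2}^{\cX,(\cL,f)}(0;X_1,X_2)$ of Proposition \ref{AB-annulus}, whose proof rests on the pair-of-pants identity (Example \ref{pants}), Lemma \ref{C}, and Proposition \ref{xi-txi}. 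Assembling these pieces yields \eqref{eqn:main-eqn} in all cases.
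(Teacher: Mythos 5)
Your proposal is correct and follows essentially the same route as the paper: the stable case via the A- and B-model graph sums (Theorems \ref{thm:Asum}(3) and \ref{thm:Bsum}(3)) reduced to the per-graph weight comparison of Proposition \ref{AB-stable}, with the unstable cases $(0,1)$ and $(0,2)$ handled by Propositions \ref{AB-disk} and \ref{AB-annulus} together with the same graph-sum tails. Your sketch of the five building-block matches keyed to the identity \eqref{eqn:Rf}, with the open leaves alone contributing the factor $(-|G|)^n$ via \eqref{eqn:ABopen}, Lemma \ref{xihxi}, and Proposition \ref{xi-txi}, is exactly how the paper proves Proposition \ref{AB-stable}.
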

As a consequence, the $H^*(\cB\bmu_m)^{\otimes n}$-valued formal power series
$F_{g,n}^{\cX,(\cL,f)}(\btau;X_1,\ldots, X_n)$
converges in an open neighborhood of the origin in $\bC^p\times
\bC^n$.

\end{document}